\newtheorem{theorem}{Theorem}[section]
\newtheorem{proposition}[theorem]{Proposition}
\newtheorem{lemma}[theorem]{Lemma}
\newtheorem{corollary}[theorem]{Corollary}
\theoremstyle{definition}
\newtheorem{definition}[theorem]{Definition}
\theoremstyle{remark}
\newtheorem{remark}[theorem]{Remark}
\newtheorem{example}[theorem]{Example}
\begin{document}

\title[A Fundamental Class for Intersection Spaces]{A Fundamental Class for Intersection Spaces of Depth One Witt Spaces}

\author[Dominik J. Wrazidlo]{Dominik J. Wrazidlo}

\address{Institute of Mathematics for Industry, Kyushu University, Motooka 744, Nishi-ku, Fukuoka 819-0395, Japan}
\email{d-wrazidlo@imi.kyushu-u.ac.jp}

\subjclass[2010]{55N33, 57P10, 55P62, 55R10, 55R70.}

\keywords{Stratified spaces, pseudomanifolds, intersection spaces, Poincar\'{e} duality, rational homotopy, fiber bundles, signature.}

\begin{abstract}
By a theorem of Banagl-Chriestenson, intersection spaces of depth one pseudomanifolds exhibit generalized Poincar\'{e} duality of Betti numbers, provided that certain characteristic classes of the link bundles vanish.
In this paper, we show that the middle-perversity intersection space of a depth one Witt space can be completed to a rational Poincar\'{e} duality space by means of a single cell attachment, provided that a certain rational Hurewicz homomorphism associated to the link bundles is surjective.
Our approach continues previous work of Klimczak covering the case of isolated singularities with simply connected links.
For every singular stratum, we show that our condition on the rational Hurewicz homomorphism implies that the Banagl-Chriestenson characteristic classes of the link bundle vanish.
Moreover, using Sullivan minimal models, we show that the converse implication holds at least in the case that twice the dimension of the singular stratum is bounded by the dimension of the link.
As an application, we compare the signature of our rational Poincar\'{e} duality space to the Goresky-MacPherson intersection homology signature of the given Witt space.
We discuss our results for a class of Witt spaces having circles as their singular strata.
\end{abstract}

\maketitle
\tableofcontents

\section{Introduction}
The method of intersection spaces has been introduced by Banagl in \cite{ban2, ban} to provide a spatial perspective on Poincar\'{e} duality for singular spaces.
Following Banagl's original idea, such a theory should assign to a given singular space $X$ a family of \emph{intersection spaces} -- namely, spaces $I^{\overline{p}}X$ parametrized by a so-called perversity function $\overline{p}$ -- in such a way that, when $X$ is a closed, oriented $n$-dimensional pseudomanifold, generalized Poincar\'{e} duality $\widetilde{H}_{\ast}(I^{\overline{p}}X; \mathbb{Q}) \cong \widetilde{H}^{n-\ast}(I^{\overline{q}}X; \mathbb{Q})$ holds for the reduced singular (co)homology groups across complementary perversities $\overline{p}$ and $\overline{q}$.
Recall that this generalized form of Poincar\'{e} duality involving perversity functions originates from the well-established intersection homology theory $IH_{\ast}^{\overline{p}}(X; \mathbb{Q})$ of Goresky-MacPherson \cite{gm, gm2}.

The purpose of this paper is to upgrade the middle perversity intersection space of a depth one Witt space to a rational Poincar\'{e} duality space.
The fundamental class will be constructed by a single cell attachment in the top degree.
Before discussing our results (see \Cref{Outline of results}), which build on work of Klimczak \cite{klim} and Banagl-Chriestenson \cite{bc}, we give in the following an outline of several existing results in the theory of intersection spaces.

In comparison with Banagl's intersection space homology theory $HI_{\ast}^{\overline{p}}(X; \mathbb{Q}) = H_{\ast}(I^{\overline{p}}X; \mathbb{Q})$ one can observe that Goresky-MacPherson's intersection homology, and also Cheeger's $L^{2}$ cohomology of Riemannian pseudomanifolds \cite{che, che2, che3}, arise from certain intermediate algebraic chain complexes rather than from spatial modifications.
The intersection space construction itself modifies a space only near its singular strata:
Loosely speaking, each singularity link is replaced by a spatial approximation that truncates its homology in a degree dictated by the perversity function.
Motivation for using such spatial homology truncations (Moore approximations) of the links comes from a similar behavior of intersection homology groups in the case of isolated singularities (see Section 2 in \cite{ban2}).
As it turns out, the homology theories $HI_{\ast}^{\overline{p}}$ and $IH_{\ast}^{\overline{p}}$ (as well as their corresponding cohomology theories) are in general not isomorphic.
However, at least in the case of singular Calabi-Yau $3$-folds, they are related via mirror symmetry (see \cite{ban}).

Whenever intersection spaces exists, they serve as a source of desirable features that are not available in the context of intersection homology.
For instance, intersection space cohomology comes automatically equipped with a perversity internal cup product.
Moreover, addressing a problem suggested by Goresky and MacPherson in \cite{gm3}, intersection spaces provide an approach to construct generalized homology theories for singular spaces, like intersection $K$-theory (see Chapter 2.8 in \cite{ban}, as well as \cite{spie}).
Naturally, the advantages of the theory $HI_{\ast}^{\overline{p}}$ over $IH_{\ast}^{\overline{p}}$ come at the cost that the existence of intersection spaces which satisfy generalized Poincar\'{e} duality is far from granted.
For pseudomanifolds with isolated singularities, intersection spaces do always exist, and their duality theory is well-studied \cite{ban}.
However, for pseudomanifolds with more complicated singularities, the implementation of intersection space theory becomes rapidly more involved.
This is already evident in the case of arbitrary two strata pseudomanifolds:
Surprisingly, even if an intersection spaces can be constructed, the existence of a generalized Poincar\'{e} duality isomorphism turns out to be obstructed in general.
As discovered by Banagl and Chriestenson \cite{bc}, the failure of duality is precisely measured by \emph{local duality obstructions}, which are certain characteristic classes associated to the link bundle over the singular stratum of the pseudomanifold.
These obstruction classes are abstractly definable for fiberwise truncatable fiber bundles, and they vanish for product bundles and certain flat bundles, but not for generally twisted bundles.
For some specific three strata pseudomanifolds with bottom stratum a point, a duality result for intersection spaces has been established in \cite{ban3}.
By developing an inductive method of intersection space pairs, Agust\'{i}n and de Bobadilla have recently proposed in \cite{ab} a quite general construction of intersection spaces for pseudomanifolds of arbitrary stratification depth, at least when the link bundles can be compatibly trivialized.
However, an obstruction theory for generalized Poincar\'{e} duality of intersection space pairs is not known (compare problem (6) in Section 2.6 in \cite{ab}).

In view of the difficulties that arise in constructing intersection spaces, it seems beneficial to study intersection space homology by means of techniques that avoid constructing the intersection space itself.
Notable alternative approaches are via $L^{2}$ theory \cite{bh}, via linear algebra \cite{ges}, via sheaf theory \cite{ab}, and via differential forms \cite{ban4, ess}.
The approach via $L^{2}$ theory applies to two strata pseudomanifolds having trivial link bundle.
As for the linear algebra approach, Geske \cite{ges} constructs so-called algebraic intersection spaces on the chain level.
His construction is based on a generalization of Moore approximations to multiple degrees that might in general not be realizable as a spatial modification of a tubular neighborhood of the singular set.
While algebraic intersection spaces that satisfy generalized Poincar\'{e} duality exist for a large class of Whitney stratified pseudomanifolds, they are remote from the spatial concept in that they do not exhibit a natural cup product on cohomology, and the generalized local duality obstructions of Banagl-Chriestenson can be shown to vanish for an appropriate choice of the local intersection approximation (see Theorem 4.10 in \cite{ges}).
On the level of homology, algebraic intersection spaces turn out to be non-isomorphic to the intersection space pair approach of Agust\'{i}n and de Bobadilla (see Section 6 in \cite{ges}).
Note that in \cite{ab}, Agust\'{i}n and de Bobadilla pursue a sheaf theoretic approach that is inspired by work of Banagl, Maxim and Budur \cite{bm0, bm, bbm, max}.
Namely, they associate to intersection space pairs certain constructible sheaf complexes on the original pseudomanifold satisfying axioms analogous to those of the intersection chain complex in intersection homology theory \cite{gm2}.
Then, in Theorem 10.6 in \cite{ab} they show that so-called general intersection space complexes give rise to generalized Poincar\'{e} duality for two strata pseudomanifolds.
Finally, concerning the differential form approach, we note the special and important feature that wedge product of forms followed by integration induces a \emph{canonical} non-degenerate intersection pairing on cohomology in analogy with ordinary de Rham cohomology.

Returning to the original spatial approach, Klimczak \cite{klim} pursues the idea to realize Poincar\'{e} duality for intersection spaces by cup product followed by evaluation with a fundamental class rather than only showing equality of complementary Betti numbers.
Let us consider the important case of a Witt space $X$ with isolated singularities.
In this case, the intersection spaces associated to the lower middle and upper middle perversities $\overline{m}$ and $\overline{n}$ exist, and can be chosen to be equal, $IX = I^{\overline{m}}X = I^{\overline{n}}X$.
By a \emph{Klimczak completion} of $IX$ we shall mean a rational Poincar\'{e} duality space of the form $\widehat{IX} = IX \cup e^{n}$, where $n$ denotes the dimension of $X$.
If $IX$ admits a Klimczak completion, then the fundamental class in $H_{n}(\widehat{IX})$ arises from the newly attached top-dimensional cell $e^{n}$, and an easy Mayer-Vietoris argument implies that the Betti numbers of $IX$ and $\widehat{IX}$ agree in degrees $1, \dots, n-1$.
In \cite{klim} it is shown that Klimczak completions exist for compact Witt spaces having isolated singularities with simply connected links.
Moreover, the rational homotopy type of a Klimczak completion is determined by the intersection space whenever a theorem of Stasheff \cite{sta} is applicable.
In view of future applications it seems interesting to invoke rational surgery theory to realize Klimczak completions by manifolds.

In this paper we study Klimczak completions for middle perversity intersection spaces of compact depth one Witt spaces.
Future study will have to clarify the obstructions to the existence of Klimczak completions in the case of higher stratification depth.

The paper is structured as follows.
\Cref{Outline of results} presents our main results in case of a two strata Witt space.
In \Cref{general notation} we list some notation that will be used throughout the paper.
\Cref{truncation cones} and \Cref{rational homotopy theory} contain the proofs of our main technical results.
Finally, in \Cref{intersection spaces and the signature}, we prove our main results for depth one Witt spaces, and illustrate them in an example.

\textbf{Acknowledgements}
We are grateful to Markus Banagl and Timo Essig for several discussions.
Also, we thank Manuel Amann for a brief correspondence leading to \Cref{example minimal model}.

At the time this work was completed, the author was a JSPS International Research Fellow (Postdoctoral Fellowships for Research in Japan (Standard)).

\section{Statement of results}\label{Outline of results}
In this paper we study Klimczak completions for middle perversity intersection spaces of compact depth one Witt spaces.
For this purpose, we adopt the framework of Thom-Mather stratified spaces as presented in Section 8 of \cite{bc}.
For simplicity, we consider in the following a two strata Witt space $X$ with singular stratum $B$.
Then, the Thom-Mather control data induce a possibly twisted smooth fiber bundle $E \rightarrow B$ with fiber the link $L$ of $B$ such that the complement of a suitable tubular neighborhood of $B$ in $X$ is a smooth $n$-manifold $M$ with boundary $\partial M = E$.
In this setting, as explained in Section 10 of \cite{bc}, the intersection spaces $I^{\overline{m}}X$ and $I^{\overline{n}}X$ associated to the lower middle and upper middle perversities $\overline{m}$ and $\overline{n}$ exist and can be chosen to be equal, $IX = I^{\overline{m}}X = I^{\overline{n}}X$, provided that the fiber $L$ admits an \emph{equivariant} Moore approximation $f_{<} \colon L_{<} \rightarrow L$ of degree $\lfloor \frac{1}{2}(\operatorname{dim} L + 1)\rfloor$ (with respect to a suitable structure group for $E \rightarrow B$).
In view of Theorem 9.5 in \cite{bc} one might speculate that vanishing of Banagl-Chriestenson's local duality obstructions for $E \rightarrow B$ is in some way related to existence of a Klimczak completion for $IX$ because both assumptions imply Poincar\'{e} duality for the Betti numbers of $IX$.
In this context, a central role is played by the  \emph{truncation cone}, $\operatorname{cone}(F_{<})$, the mapping cone of the fiberwise truncation $F_{<} \colon \operatorname{ft}_{<}E \rightarrow E$ induced by $f_{<}$.
Namely, the local duality obstructions of the bundle $E \rightarrow B$ vanish if and only if all $(n-1)$-complementary cup products in $\widetilde{H}^{\ast}(\operatorname{cone}(F_{<}))$ vanish, where $n$ denotes the dimension of $X$.
Moreover, when $B$ is a point and $L$ is simply connected, then the construction of the Klimczak completion for the intersection space $IX = \operatorname{cone}(F_{<}) \cup_{\partial M} M$ is implemented in \cite{klim} (see Section 3.2.1 and also Proposition 3.11 therein) as follows.
In a first local step, an $n$-cell $e^{n}$ is attached to the truncation cone to produce a Poincar\'{e} duality pair $(\operatorname{cone}(F_{<}) \cup e^{n}, E)$, which is then in a second global step glued to the regular part $(M, \partial M)$ of $X$ to yield the desired rational Poincar\'{e} duality space $\widehat{IX}$.
Generalizing to an arbitrary singular stratum $B$ and arbitrary link $L$, our \Cref{main theorem} states that a Poincar\'{e} duality pair of the form $(\operatorname{cone}(F_{<}) \cup e^{n}, E)$ exists if and only if the rational Hurewicz homomorphism of the truncation cone in degree $n-1$,
\begin{align}\label{hurewicz truncation cone}
\operatorname{Hur}_{n-1 \ast} \colon \pi_{n-1}(\operatorname{cone}(F_{<}), \operatorname{pt}) \otimes_{\mathbb{Z}} \mathbb{Q} \rightarrow H_{n-1}(\operatorname{cone}(F_{<})),
\end{align}
is surjective.
Moreover, we show that the local duality obstructions for $E \rightarrow B$ vanish necessarily in that case, which reveals some part of their homotopy theoretic nature.
As a counterpart of Theorem 9.5 in \cite{bc}, our \Cref{main theorem} implies the following
\begin{theorem}\label{introduction main theorem}
Let $(X,B)$ be a compact two strata Witt space of dimension $n \geq 3$.
Assume that the link $L$ admits an equivariant Moore approximation $f_{<} \colon L_{<} \rightarrow L$ of degree $\lfloor \frac{1}{2}(\operatorname{dim} L + 1) \rfloor$.
If the rational Hurewicz homomorphism of the associated truncation cone (see (\ref{hurewicz truncation cone})) is surjective in degree $n-1$, then the middle perversity intersection space $IX$ admits a Klimczak completion.
\end{theorem}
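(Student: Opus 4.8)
The plan is to deduce this from the local statement \Cref{main theorem} by the standard gluing of Poincar\'{e} pairs along a common boundary. Recall the Banagl--Chriestenson description of the intersection space, $IX = \operatorname{cone}(F_<) \cup_E M$, where $M$ is a compact oriented $n$-manifold with boundary $\partial M = E$ and $E \to B$ is the link bundle. Since $X$ has stratification depth one, the link $L$ is a closed oriented manifold; hence the total space $E$ of the smooth fiber bundle $E \to B$ over the closed oriented manifold $B$ with fiber $L$ is itself a closed oriented $(n-1)$-manifold, so in particular $E$ is a rational Poincar\'{e} duality space and $(M, \partial M)$ is a Poincar\'{e} pair. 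Because the rational Hurewicz homomorphism \eqref{hurewicz truncation cone} is surjective in degree $n-1$, \Cref{main theorem} provides an $n$-cell $e^n$, attached along a map $\varphi \colon S^{n-1} \to \operatorname{cone}(F_<)$, such that $(C, E)$ with $C := \operatorname{cone}(F_<) \cup_\varphi e^n$ is a rational Poincar\'{e} pair of formal dimension $n$. Since $\varphi$ factors through $\operatorname{cone}(F_<) \subseteq IX$ (so that attaching $e^n$ and gluing $M$ are two independent attachments to $\operatorname{cone}(F_<)$), the resulting space
\[
\widehat{IX} := C \cup_E M = IX \cup_\varphi e^n
\]
has the homotopy type of a finite CW complex and is of the form required for a Klimczak completion.

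It remains to carry out the global step, i.e. to show that $\widehat{IX}$ satisfies rational Poincar\'{e} duality of dimension $n$. Fix compatible orientations so that the relative fundamental classes satisfy $\partial[C] = [E]$ and $\partial[M] = -[E]$ in $H_{n-1}(E;\mathbb{Q})$; then a Mayer--Vietoris argument yields a class $[\widehat{IX}] \in H_n(\widehat{IX};\mathbb{Q})$ restricting to $[C]$ and $[M]$. Consider the long exact cohomology sequence of the pair $(\widehat{IX}, C)$, rewritten via the excision isomorphism $H^\ast(\widehat{IX},C) \cong H^\ast(M,\partial M)$,
\[
\cdots \to H^{k-1}(C) \to H^k(M,\partial M) \to H^k(\widehat{IX}) \to H^k(C) \to H^{k+1}(M,\partial M) \to \cdots,
\]
and the long exact homology sequence of the pair $(\widehat{IX}, M)$, rewritten via $H_\ast(\widehat{IX},M) \cong H_\ast(C,E)$,
\[
\cdots \to H_{n-k+1}(C,E) \to H_{n-k}(M) \to H_{n-k}(\widehat{IX}) \to H_{n-k}(C,E) \to H_{n-k-1}(M) \to \cdots,
\]
all with $\mathbb{Q}$ coefficients. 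Capping with the fundamental classes $[C]$, $[M]$, $[\widehat{IX}]$ defines a ladder between these two sequences whose columns over $H^\ast(C)$ are the Poincar\'{e} pair duality isomorphisms supplied by \Cref{main theorem}, and whose columns over $H^\ast(M,\partial M)$ are the Poincar\'{e}--Lefschetz duality isomorphisms of the manifold with boundary $M$. By the five lemma, the remaining column $- \cap [\widehat{IX}] \colon H^k(\widehat{IX};\mathbb{Q}) \to H_{n-k}(\widehat{IX};\mathbb{Q})$ is an isomorphism for every $k$. Hence $\widehat{IX}$ is a rational Poincar\'{e} duality space of dimension $n$ of the form $IX \cup e^n$, that is, a Klimczak completion of $IX$.

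The one point that requires real care is the sign bookkeeping that makes the comparison ladder commute: one must verify, with the compatible orientations above, that the squares built from the connecting homomorphisms and the cap products commute up to the expected signs, and that the excision isomorphisms identify $- \cap [C]$ and $- \cap [M]$ with $- \cap [\widehat{IX}]$ on the relevant relative groups. This is the classical mechanism behind gluing Poincar\'{e} pairs along a common boundary; since we work throughout with constant $\mathbb{Q}$ coefficients, no fundamental-group subtleties intervene, and I would either carry it out directly or cite it. Everything genuinely nontrivial here -- the existence of the cell $e^n$ and the verification that the attachment turns the truncation cone into a Poincar\'{e} pair relative to $E$ -- is precisely the content of \Cref{main theorem}, which additionally records that the surjectivity hypothesis forces the Banagl--Chriestenson local duality obstructions of $E \to B$ to vanish, in accordance with the Betti-number duality that a Klimczak completion entails. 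Finally, a short Mayer--Vietoris computation comparing $\widehat{IX} = C \cup_E M$ with $IX = \operatorname{cone}(F_<) \cup_E M$ shows that $H_k(IX;\mathbb{Q}) \cong H_k(\widehat{IX};\mathbb{Q})$ for $1 \le k \le n-1$, so passing to the completion leaves intersection space homology undisturbed in the intermediate degrees.
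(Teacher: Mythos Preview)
Your proof is correct and follows essentially the same approach as the paper. The paper deduces this theorem as the special case $r=1$ of \Cref{main corollary}(a): one first invokes \Cref{main theorem} (equivalently, \Cref{proposition poincare duality pairs}) to produce the Poincar\'{e} duality pair $(\operatorname{cone}(F_{<}) \cup_{\phi} D^{n}, E)$, and then glues it to $(M,\partial M)$ by citing the gluing principle for Poincar\'{e} duality pairs (Theorem~3.3 in \cite{klim}); you carry out exactly this program, the only difference being that you sketch the gluing step directly via the cap-product ladder and the five lemma rather than citing Klimczak.
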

More generally, our method applies to depth one Witt spaces with more than one singular stratum (see \Cref{main corollary}(a)).
Note that when $B$ is a point and $L$ is simply connected, we recover Klimczak's original result.
Recall that the argument in \cite{klim} uses the rational Hurewicz theorem to show that the rational Hurewicz homomorphism of the truncation cone is always surjective in the relevant degree.
We do not need to assume that the link is simply connected by employing the results of \cite{wra} for constructing Moore approximations for arbitrary path connected cell complexes.

According to \Cref{main theorem}, our surjectivity condition on the rational Hurewicz homomorphism is sufficient for the local duality obstructions to vanish.
However, even in the case of a globally trivial link bundle, we do in general not know whether the converse implication is also true.
Nevertheless, under the additional assumption that the truncation cone is simply connected, the  converse implication can be analyzed further by means of minimal Sullivan models from rational homotopy theory (see \Cref{corollary hurewicz theorem}).
In particular, in view of \Cref{main theorem}, an important consequence of our \Cref{proposition rational poincare duality space} is the following

\begin{theorem}[compare \Cref{main corollary}(b)]
Let $(X,B)$ be a compact two strata Witt space of dimension $n \geq 3$.
Assume that the link $L$ is path connected, and admits an equivariant Moore approximation $f_{<} \colon L_{<} \rightarrow L$ of degree $\lfloor \frac{1}{2}(\operatorname{dim} L+1) \rfloor$.
Suppose that $\operatorname{cone}(F_{<})$, the associated truncation cone, is simply connected.
If
\begin{align*}
n < 3 \cdot \operatorname{max}\{\lfloor  \frac{1}{2}(\operatorname{dim} L+1) \rfloor, \lceil \frac{1}{2}(\operatorname{dim} L+1) \rceil\} = \begin{cases}
\frac{3}{2}(\operatorname{dim} L+1), \quad &\operatorname{dim} L \text{ odd}, \\
\frac{3}{2}(\operatorname{dim} L+2), \quad &\operatorname{dim} L \text{ even},
\end{cases}
\end{align*}
or, equivalently,
\begin{align*}
\operatorname{dim}B = n-1-\operatorname{dim} L < \begin{cases}
\frac{1}{2}(\operatorname{dim} L+1), \quad &\operatorname{dim} L \text{ odd}, \\
\frac{1}{2}(\operatorname{dim} L+4), \quad &\operatorname{dim} L \text{ even},
\end{cases}
\end{align*}
then the following statements are equivalent:
\begin{enumerate}[(i)]
\item The local duality obstructions of the link bundle vanish, that is, all $(n-1)$-complementary cup products in $\widetilde{H}^{\ast}(\operatorname{cone}(F_{<}))$ vanish.
\item The rational Hurewicz homomorphism of the truncation cone (see (\ref{hurewicz truncation cone})) is surjective in degree $n-1$.
\end{enumerate}
\end{theorem}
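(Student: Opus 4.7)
The implication (ii) $\Rightarrow$ (i) is already contained in \Cref{main theorem}, so the task is to establish the converse (i) $\Rightarrow$ (ii). The plan is to translate the question to the minimal Sullivan model $(\Lambda V, d)$ of the simply connected space $C := \operatorname{cone}(F_<)$ and exploit the dimensional hypothesis, which can be rewritten as $n < 3k$ with $k := \lceil \frac{1}{2}(\operatorname{dim}L+1) \rceil$. Under the standard identification $V^{n-1} \cong \operatorname{Hom}(\pi_{n-1}(C) \otimes \mathbb{Q}, \mathbb{Q})$, the rational Hurewicz homomorphism in degree $n-1$ is dual to the projection $H^{n-1}(\Lambda V, d) \to V^{n-1}$ that reads off the linear part $\omega_1$ of a cocycle representative $\omega$ (well-defined by minimality, since $d(V) \subset \Lambda^{\geq 2} V$). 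Hence (ii) is equivalent to the statement that every cocycle $\omega \in (\Lambda V)^{n-1}$ with $\omega_1 = 0$ is a coboundary.

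The next step is to locate $V$ in degrees $\geq k$. From the Leray-Serre spectral sequence for the relative link bundle $(L, L_<) \to (E, \operatorname{ft}_< E) \to B$ with $E_2^{p,q} = H^p(B; \mathcal{H}^q(L, L_<; \mathbb{Q}))$, one has $H^q(L, L_<; \mathbb{Q}) = 0$ for $q < \lfloor \frac{1}{2}(\operatorname{dim}L+1) \rfloor$ by the Moore-approximation property. When $\operatorname{dim}L$ is even, the Witt condition on $X$ (combined with the fact that depth-one links are genuine manifolds) forces $H^{\operatorname{dim}L/2}(L; \mathbb{Q}) = 0$, so the vanishing range extends to $q < k$ in all cases. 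Consequently $\widetilde{H}^i(C; \mathbb{Q}) = 0$ for $i < k$, and the rational Hurewicz theorem applied to the simply connected space $C$ then yields $V^i = 0$ for $i < k$.

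The dimensional inequality now closes the argument. Let $\omega \in (\Lambda V)^{n-1}$ be a cocycle with $\omega_1 = 0$, so $\omega \in \Lambda^{\geq 2} V$. Elements of $\Lambda^{\geq 3} V$ lie in degrees $\geq 3k$, but $n - 1 < 3k$; hence $\omega \in \Lambda^2 V$, and we may write $\omega = \sum_i u_i v_i$ with $u_i, v_i \in V$ of degree at least $k$ and $|u_i|+|v_i| = n-1$. Each factor satisfies $|u_i| \leq n - 1 - k \leq 2k - 2$, so $du_i \in \Lambda^{\geq 2} V$ would have degree at most $2k - 1$, strictly below the minimum degree $2k$ of $\Lambda^{\geq 2} V$; therefore $du_i = dv_i = 0$. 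Consequently $[\omega] = \sum_i [u_i] \smile [v_i]$ is a sum of $(n-1)$-complementary cup products of positive-degree rational cohomology classes on $C$, and hypothesis (i) forces $[\omega] = 0$. This yields the desired injectivity of the dualized Hurewicz map.

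The principal technical hurdle is the connectivity analysis: one has to verify carefully that, in the even-dimensional case, the Witt condition together with $L$ being a manifold upgrades the effective vanishing range from $\lfloor (\operatorname{dim}L+1)/2 \rfloor$ to $k = \lceil (\operatorname{dim}L+1)/2 \rceil$, so that the sharper hypothesis $n < 3k$ on the larger of the two values genuinely governs the minimal model. Once this has been pinned down, the remaining degree bookkeeping is essentially rigid, and the collapse of $\omega$ to a sum of cup products of closed generators is the decisive observation driving the argument.
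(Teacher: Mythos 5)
Your proposal is correct and follows essentially the same route as the paper's: the implication (ii) $\Rightarrow$ (i) via \Cref{main theorem}, and the converse by passing to the minimal Sullivan model, identifying the dual Hurewicz map with the linear-part projection $\zeta$ (\Cref{proposition hurewicz and sullivan}), using that $V$ is concentrated in degrees $\geq \operatorname{max}\{k,l\}$ so that under $n < 3\operatorname{max}\{k,l\}$ any cocycle with vanishing linear part is a sum of products of closed generators, hence exact by the cup-product hypothesis (this is exactly \Cref{lemma non-vanishing zeta homomorphism} combined with \Cref{lemma minimal sullivan model} and \Cref{corollary hurewicz theorem}). The only genuine variation is that you obtain $\widetilde{H}^{i}(\operatorname{cone}(F_{<})) = 0$ for $i < \operatorname{max}\{k,l\}$ from the relative Serre spectral sequence with local coefficients (which should be set up for the mapping-cylinder bundle pair $(\operatorname{cyl}(F_{<}), \operatorname{ft}_{<}E)$, since $\operatorname{ft}_{<}E$ is not literally a subspace of $E$) rather than from the paper's precosheaf Mayer--Vietoris argument in \Cref{lemma homology of truncation cone}; both yield the same conclusion, and the remaining small gaps you leave implicit (finite type of $H_{\ast}(\operatorname{cone}(F_{<}))$, needed for the identification of $\zeta$ with the dual Hurewicz map) are easily filled as in the paper.
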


The assumption that the truncation cone is simply connected is valid in many cases of practical interest -- for instance, whenever the link has abelian fundamental group as pointed out in \Cref{remark abelian fundamental group}, or when the link bundle is trivial, see \Cref{remark trivial link bundle}.

If the dimension of the Witt space $X$ is of the form $n = 4d$, then it is natural to study the symmetric intersection form $H_{2d}(\widehat{IX}) \times H_{2d}(\widehat{IX}) \rightarrow \mathbb{Q}$ of a Klimczak completion $\widehat{IX}$.
In accordance with the results of Section 11 in \cite{bc}, we can compare it to the Goresky-MacPherson-Siegel intersection form $IH_{2d}(X) \times IH_{2d}(X) \rightarrow \mathbb{Q}$ on middle-perversity intersection homology (see Section I.4.1 in \cite{sieg}) as follows.

\begin{theorem}[compare \Cref{main corollary}(c)]
Let $(X,B)$ be a compact two strata Witt space of dimension $n = 4d$.
Suppose that the rational Hurewicz homomorphism of the truncation cone (see (\ref{hurewicz truncation cone})) is surjective in degree $n-1$.
Then, the Witt element $w_{HI} \in W(\mathbb{Q})$ induced by the symmetric intersection form $H_{2d}(\widehat{IX}) \times H_{2d}(\widehat{IX}) \rightarrow \mathbb{Q}$ of the Poincar\'{e} duality space $\widehat{IX}$ equals the Witt element $w_{IH} \in W(\mathbb{Q})$ induced by the Goresky-MacPherson-Siegel intersection form $IH_{2d}(X) \times IH_{2d}(X) \rightarrow \mathbb{Q}$ on middle-perversity intersection homology.
In particular, it follows that the two intersection forms have equal signatures.
\end{theorem}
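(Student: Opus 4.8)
The plan is to split both intersection forms along the link bundle $E$ and reduce the statement to a purely local comparison near the singular stratum $B$. Since the rational Hurewicz homomorphism (\ref{hurewicz truncation cone}) is surjective in degree $n-1$, \Cref{main theorem} furnishes an oriented Poincar\'{e} pair $(C,E)$ with $C := \operatorname{cone}(F_{<}) \cup e^{n}$, so that $\widehat{IX} = M \cup_{E} C$ is a closed oriented rational Poincar\'{e} space of dimension $n = 4d$, glued from the two Poincar\'{e} pairs $(M,\partial M)$ and $(C,E)$ along their common boundary $E = \partial M$. On the other hand, the Thom--Mather data present $X$ as a closed Witt space glued along $E$ from the same pair $(M,\partial M)$ and the fiberwise closed cone $(\bar{c}_{B}E,E)$ --- the mapping cylinder of $E \to B$ --- with its natural structure of a compact Witt space with boundary $E$.

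First I would invoke the Witt-class refinement of Novikov additivity. For oriented Poincar\'{e} pairs glued along their full common boundary, the Witt class in $W(\mathbb{Q})$ of the intersection form of the resulting closed Poincar\'{e} space is the sum of the Witt classes of the two relative intersection forms; the standard Mayer--Vietoris argument (the image of $H_{2d}(E)$ is an isotropic subspace whose annihilator modulo itself carries the orthogonal sum of the two relative forms) goes through verbatim in the rational Poincar\'{e} category, and its analogue for Witt spaces cut along a manifold boundary is Siegel's additivity of the intersection form. Since $M$ is a manifold its relative intersection form agrees with the ordinary one, $w_{IH}(M,\partial M) = w(M,\partial M)$, and applying additivity to the two decompositions above yields
\begin{align*}
w_{HI} = w(M,\partial M) + w(C,E), \qquad w_{IH} = w(M,\partial M) + w_{IH}(\bar{c}_{B}E, E),
\end{align*}
so the theorem reduces to the local identity $w(C,E) = w_{IH}(\bar{c}_{B}E, E)$ in $W(\mathbb{Q})$; the equality of signatures then follows by applying the signature homomorphism $W(\mathbb{Q}) \to \mathbb{Z}$.

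Next I would carry out the local comparison. Both sides are controlled by a Leray-type spectral sequence over $B$: for $(\bar{c}_{B}E,E)$ the fiber contribution is the intersection homology of the cone $\bar{c}L$ on the link, which by the cone formula is $H_{*}(L)$ truncated from above at degree $\lfloor \tfrac{1}{2}(\operatorname{dim} L+1)\rfloor$; for $(C,E)$ the fiber contribution is $\widetilde{H}_{*}(\operatorname{cone}(f_{<}))$, which is $H_{*}(L)$ cotruncated (truncated from below) at the very same degree, because $f_{<}$ realises a degree-$\lfloor \tfrac{1}{2}(\operatorname{dim} L+1)\rfloor$ Moore approximation of $L$. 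Poincar\'{e} duality on the closed link manifold $L$ interchanges these two complementary truncations, so the middle-dimensional self-pairing that each spectral sequence induces in total degree $2d$ is, up to Witt equivalence, one and the same symmetric form over $B$ --- the Poincar\'{e} intersection pairing on $IH_{*}(B;\mathcal{H})$ for the local system $\mathcal{H}$ given by the monodromy representation on the relevant middle-degree homology of the link. This identifies $w(C,E)$ and $w_{IH}(\bar{c}_{B}E, E)$ with a common twisted signature of $B$, in accordance with Section~11 of \cite{bc}, which settles the reduced identity.

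The main obstacle is precisely this last step. One must handle the degree bookkeeping carefully --- in particular the parity of $\operatorname{dim} L$ (hence of $\operatorname{dim} B$), which governs whether the middle degree of the link itself enters and thus whether the lower- and upper-middle cone formulas coincide --- and one must check that the two pairings over $B$ are not merely abstractly isometric but compatibly so with respect to the gluing with $(M,\partial M)$, so that the additive decompositions above can be matched term by term. Here one uses that attaching $e^{n}$ affects only $H_{n}$ and $H^{n}$, so that the middle cup-product form on $C$ is read off entirely from $\widetilde{H}^{*}(\operatorname{cone}(F_{<}))$ together with the duality of the pair $(C,E)$ supplied by \Cref{main theorem}; this is what makes the intersection-space side computable by the same spectral sequence that governs intersection homology on the other side.
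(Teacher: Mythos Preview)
Your Novikov-additivity reduction is exactly the move the paper makes, but you then take a much harder path for the local comparison and leave it only sketched. The paper's proof is far shorter because it observes that \emph{both} local Witt classes vanish separately, so no spectral-sequence comparison is needed.

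Concretely, the paper does not split $w_{IH}$ symmetrically. It invokes Corollary~10.2 of \cite{bc}, which already gives $w_{IH} = w(M,\partial M)$ in $W(\mathbb{Q})$; this is the known identification of the Goresky--MacPherson--Siegel Witt class with the Novikov Witt class of the regular part. So the whole problem reduces to showing $w_{HI} = w(M,\partial M)$, i.e.\ (after your additivity step) to $w(C,E)=0$. For this the paper uses a one-line observation you overlooked: condition~(1) of \Cref{proposition poincare duality pairs} (surjectivity of $H_{2d}(E)\to H_{2d}(\operatorname{cone}(F_{<}))$), together with the fact that attaching $e^{n}$ leaves $H_{2d}$ unchanged when $n=4d$, forces the image of $H_{2d}(E)$ in $H_{2d}(C)$ to be everything. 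Hence the relative intersection form of $(C,E)$ is supported on the zero space and $w(C,E)=0$.

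Your Leray/twisted-signature route is not wrong in spirit, but as written it is not a proof: you assert that the two spectral sequences yield Witt-equivalent middle-degree forms without identifying the pairings, handling differentials, or pinning down the compatibility with the gluing. All of this is unnecessary once you notice that $w(C,E)=0$ (and hence, by your own additivity on the $IH$ side, $w_{IH}(\bar c_{B}E,E)=0$ as well). Replace your second step by the surjectivity argument above and cite Corollary~10.2 of \cite{bc}; the proof then collapses to a few lines.
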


We point out that our proof of \Cref{main corollary}(c) exploits massively the existence of a fundamental class for $\widehat{IX}$, which enables us to invoke Novikov additivity for Poincar\'{e} duality pairs (see Lemma 3.4 in \cite{klim}).
On the other hand, lacking the existence of a fundamental class under the assumption that the local duality obstructions vanish, the argument of Banagl-Chriestenson in Section 11 in \cite{bc} requires an involved construction of an abstract, non-canonical symmetric intersection form for $IX$.
It seems to be an interesting problem to compare intersection forms of Klimczak completions to intersection forms that arise from the differential form approach \cite{ban4}.

In \Cref{An example} we provide a class of examples of depth one Witt spaces with twisted link bundles for which our \Cref{main corollary} applies.
For further examples concerning the existence of equivariant Moore approximations in general we refer to Sections 3 and 12 in \cite{bc}.

\section{General notation}\label{general notation}
We collect some general notation that will be used throughout the paper.

By a pair of spaces $(X, A)$ we mean a topological space $X$ together with a subspace $A \subset X$.
A pointed pair of spaces $(X, A, x_{0})$ is a pair of spaces $(X, A)$ together with a basepoint $x_{0} \in A$.
A map of pairs $f \colon (X, A) \rightarrow (X', A')$ is a map $f \colon X \rightarrow X'$ such that $f(A) \subset A'$.

Let $D^{p} = \{x \in \mathbb{R}^{p}; x_{1}^{2} + \dots + x_{p}^{2} \leq 1\}$ denote the closed unit ball in Euclidean $p$-space $\mathbb{R}^{p}$, and $S^{p-1} := \partial D^{p}$ the standard $(p-1)$-sphere.
We also fix $s_{0} = 1 \in S^{0} \subset S^{p-1}$ as a basepoint.

Given a pointed pair of spaces $\left(X, A, x_{0}\right)$, the Hurewicz map in degree $n \geq 1$ is
\begin{displaymath}
\operatorname{Hur}_{n} \colon \pi_{n}\left(X, A, x_{0}\right) \rightarrow H_{n}\left(X, A; \mathbb{Z}\right), \quad \operatorname{Hur}_{n}\left(\left[f\right]\right) = f_{\ast}\left(\nu\right),
\end{displaymath}
where $f_{\ast} \colon H_{n}\left(D^{n}, \partial D^{n}; \mathbb{Z}\right) \rightarrow H_{n}\left(X, A; \mathbb{Z}\right)$ is induced by a representative
$$
f \colon \left(D^{n}, \partial D^{n}, s_{0}\right) \rightarrow \left(X, A, x_{0}\right)
$$
of $[f] \in \pi_{n}\left(X, A, x_{0}\right)$, and $\nu$ denotes a fixed generator of $H_{n}\left(D^{n}, \partial D^{n}; \mathbb{Z}\right) \cong \mathbb{Z}$.

For a pair of spaces $(X, A)$, we will denote by $H_{i}(X, A)$ and $H^{i}(X, A)$ the $i$-th homology and cohomology groups with rational coefficients, respectively.
Using the canonical identifications $H_{i}(X, A) = H_{i}(X, A; \mathbb{Z}) \otimes_{\mathbb{Z}} \mathbb{Q}$ and $H^{i}(X, A) = \operatorname{Hom}_{\mathbb{Z}}(H_{i}(X, A; \mathbb{Z}), \mathbb{Q})$, we will also write
\begin{displaymath}
\operatorname{Hur}_{n \ast} = \operatorname{Hur}_{n} \otimes_{\mathbb{Z}} \mathbb{Q} \colon \pi_{n}\left(X, A, x_{0}\right) \otimes_{\mathbb{Z}} \mathbb{Q} \rightarrow H_{n}\left(X, A\right)
\end{displaymath}
and
\begin{displaymath}
\operatorname{Hur}_{n}^{\ast} = \operatorname{Hom}_{\mathbb{Z}}(\operatorname{Hur}_{n}, \mathbb{Q}) \colon H^{n}\left(X, A\right) \rightarrow \operatorname{Hom}_{\mathbb{Z}}(\pi_{n}\left(X, A, x_{0}\right), \mathbb{Q}).
\end{displaymath}

Given maps  $X \xleftarrow{f} A \xrightarrow{g} Y$ between topological spaces, we define the \emph{homotopy pushout} of $f$ and $g$ (see Section 2 in \cite{bc}) to be the topological space $X \cup_{A}Y$ defined as the quotient of the disjoint union $A \times [0, 1] \sqcup X \sqcup Y$ by the smallest equivalence relation generated by $\{(a, 0) \sim f(a)| \; a \in A\} \cup \{(a, 1) \sim g(a)| \; a \in A\}$.
In particular, if $X = \operatorname{pt}$ is the one-point space, then $\operatorname{pt} \cup_{A}Y = \operatorname{cone}(g \colon A \rightarrow Y) = \operatorname{cone}(g)$ is just the mapping cone (the homotopy cofiber) of $A \xrightarrow{g} Y$.
If, in addition, $Y = A$ and $g = \operatorname{id}_{A}$, then $\operatorname{pt} \cup_{A}A = \operatorname{cone}(\operatorname{id}_{A}) = \operatorname{cone}(A)$ is just the cone of the space $A$.
The inclusion $A \times \{1\} \subset A \times [0, 1]$ induces a canonical inclusion $A \subset \operatorname{cone}(A)$.
Moreover, given a map $g \colon A \rightarrow Y$, the homotopy pushout of $A \xleftarrow{\operatorname{id}_{A}} A \xrightarrow{g} Y$ is just the mapping cylinder $\operatorname{cyl}(g)$ of $g$, and the inclusion $A \times \{0\} \subset A \times [0, 1]$ induces a natural inclusion $A \subset \operatorname{cyl}(g)$.

Let $[M] \in H_{n}(M, \mathbb{Z})$ denote the fundamental class of a closed oriented $n$-manifold $M^{n}$.
The image of $[M] \otimes 1$ under the canonical identification $H_{n}(M, \mathbb{Z}) \otimes \mathbb{Q} \cong H_{n}(M)$ will be denoted by $[M]$ as well.

\section{Trucation cones}\label{truncation cones}

Before stating \Cref{main theorem}, the main result of this section (see \Cref{proof of main theorem} for the proof), we explain the necessary notation taken from \cite{bc}.
Throughout this section, let $\pi \colon E \rightarrow B$ be a (locally trivial) fiber bundle of closed manifolds with closed manifold fiber $L$ and structure group $G$ such that $B$, $E$ and $L$ are compatibly oriented.
In our applications in \Cref{intersection spaces and the signature}, $\pi$ will arise as a link bundle of a depth one pseudomanifold $X$, where we utilize the setting of Thom-Mather stratified pseudomanifolds that is considered in the work of Banagl and Chriestenson (see Section 8 in \cite{bc}).

Recall that a perversity is a function $\overline{p} \colon \{2, 3, \dots\} \rightarrow \{0, 1, \dots\}$ which satisfies the Goresky-MacPherson growth conditions $\overline{p}(2) = 0$ and $\overline{p}(s) \leq \overline{p}(s + 1) \leq \overline{p}(s) + 1$ for all $s \in \{2,3, \dots\}$.
We fix two perversities $\overline{p}$ and $\overline{q}$, and require them to be complementary in the sense that $\overline{p}(s) + \overline{q}(s) = s-2 = \overline{t}(s)$ for all $s \in \{2, 3, \dots\}$, where $\overline{t}$ is called the top perversity.
Let $n-1 = \operatorname{dim} E$ and $c = \operatorname{dim} L$ denote the dimensions of the total space and the fiber, respectively.
We define the cut-off degrees $k = c-\overline{p}(c+1)$ and $l = c-\overline{q}(c+1)$, and note that $k + l = 2c - \overline{t}(c+1) = c+1$.

Recall from Definition 3.2 in \cite{bc} that a $G$-equivariant Moore approximation to $L$ of degree $r$ is a $G$-space $L_{<r}$ together with a $G$-equivariant map $L_{<r} \rightarrow L$ that induces isomorphisms $H_{i}(L_{<r}) \cong H_{i}(L)$ in degrees $i < r$, and such that $H_{i}(L_{<r}) = 0$ in degrees $i \geq r$.
We assume that $H_{i}(L) = 0$ for $i = \operatorname{min}\{k, l\}, \dots,\operatorname{max}\{k, l\}-1$, and that the fiber $L$ possesses a $G$-equivariant Moore approximation of degree $k$.
Equivalently, there is a map $f_{<} \colon L_{<} \rightarrow L$ which is a $G$-equivariant Moore approximation to $L$ both of degree $k$ and of degree $l$.
This situation is of interest in the important case that $\pi$ is the link bundle of a two strata Witt space, and $\overline{p} = \overline{m}$ and $\overline{q} = \overline{n}$ are the lower middle and upper middle perversities defined by $\overline{m}(s) = \lfloor s/2 \rfloor - 1$ and $\overline{n}(s) = \lceil s/2 \rceil - 1$ for all $s \in \{2, 3, \dots\}$, respectively (see Section 10 of \cite{bc}).
In this case it follows that $k = l = (c+1)/2$ for $c$ odd, and $\operatorname{min}\{k, l\} = c/2$ and $\operatorname{max}\{k, l\} = c/2 + 1$ for $c$ even.

Following the discussion leading to Definition 6.1 in \cite{bc}, we can consider the induced \emph{fiberwise truncation} (both of degree $k$ and of degree $l$)
$$
F_{<} \colon \operatorname{ft}_{<} E \rightarrow E.
$$
Recall that $\operatorname{ft}_{<}E$ is the total space of the fiber bundle $\pi_{<} \colon \operatorname{ft}_{<} E \rightarrow B$ obtained by replacing the fiber $L$ of $\pi$ with the fiber $L_{<}$ (by means of the $G$-action), and $F_{<} \colon \operatorname{ft}_{<} E \rightarrow E$ is induced by $f_{<} \colon L_{<} \rightarrow L$ (using $G$-equivariance) in such a way that $\pi \circ F_{<} = \pi_{<}$.
The mapping cone of the fiberwise truncation $F_{<}$ plays a central role in the theory.
In fact, according to Definition 9.1 in \cite{bc} the perversity $\overline{p}$ and perversity $\overline{q}$ intersection spaces of a two strata pseudomanifold $X^{n}$ are given by
$$
IX = I^{\overline{p}}X = I^{\overline{q}}X = \operatorname{cone}(F_{<}) \cup_{E} M,
$$
where $M$ is the $n$-dimensional manifold with boundary $\partial M = E$ that arises as the complement of a suitable tubular neighborhood of $B$ in $X$.
(Note that $IX$ is actually defined as the mapping cone of the composition of $F_{<}$ with the inclusion $E \subset M$, but this space can be seen to be homeomorphic to $\operatorname{cone}(F_{<}) \cup_{E} M$.)
Furthermore, we note that in our setting, the general definition of the local duality obstructions $\mathcal{O}_{\ast}(\pi, k, l)$ (see Definition 6.8 in \cite{bc}) reduces in degree $i$ to
$$
\mathcal{O}_{i}(\pi, k, l) = \{x \cup y | \; x \in \widetilde{H}^{i}(\operatorname{cone}(F_{<})), y \in \widetilde{H}^{n-1-i}(\operatorname{cone}(F_{<}))\},
$$
which is a linear subspace of $\widetilde{H}^{n-1}(\operatorname{cone}(F_{<}))$.
In particular, note that we have $\mathcal{O}_{i}(\pi, k, l) = 0$ if and only if $x \cup y = 0$ in $\widetilde{H}^{n-1}(\operatorname{cone}(F_{<}))$ for all $x \in \widetilde{H}^{i}(\operatorname{cone}(F_{<}))$ and $y \in \widetilde{H}^{n-1-i}(\operatorname{cone}(F_{<}))$.
Since the truncation cone $\operatorname{cone}(F_{<})$ is path connected (compare \Cref{lemma simply connected cone}), we have $\mathcal{O}_{i}(\pi, k, l) = 0$ for all $i \notin \{1, \dots, n-2\}$.

\begin{theorem}\label{main theorem}
Let $n \geq 3$ be an integer.
Let $\pi \colon E^{n-1} \rightarrow B$ be a fiber bundle of closed manifolds with closed manifold fiber $L$ and structure group $G$ such that $B$, $E$ and $L$ are compatibly oriented.
Suppose that $B$ admits a good open cover (this holds whenever $B$ is smooth or at least PL).
Let $\overline{p}$ and $\overline{q}$ be complementary perversities, and set $k = c-\overline{p}(c+1)$ and $l = c-\overline{q}(c+1)$, where $c = \operatorname{dim} L$.
Suppose that the fiber $L$ possesses a $G$-equivariant Moore approximation $f_{<} \colon L_{<} \rightarrow L$ which is both of degree $k$ and of degree $l$.
Let $F_{<} \colon \operatorname{ft}_{<} E \rightarrow E$ denote the induced fiberwise truncation of $\pi$.
Then, the following statements are equivalent:
\begin{enumerate}[(i)]
\item There exists an attaching map
$$
\phi \colon (S^{n-1}, s_{0}) \rightarrow (\operatorname{cone}(F_{<}), \operatorname{pt})
$$
and a lift $[e_{\phi}] \in H_{n}(\operatorname{cone}(F_{<}) \cup_{\phi} D^{n}, E)$ of the fundamental class $[E] \in H_{n-1}(E)$ such that $(\operatorname{cone}(F_{<}) \cup_{\phi} D^{n}, E)$ is a rational Poincar\'{e} duality pair of dimension $n$ with orientation class $[e_{\phi}]$ (see \Cref{definition poincare duality pair}).
\item The image of the fundamental class $[E] \in H_{n-1}(E)$ under the map
$$
H_{n-1}(E) \rightarrow H_{n-1}(\operatorname{cone}(F_{<}))
$$
induced by the inclusion $E \subset \operatorname{cone}(F_{<})$ is contained in the image of the rational Hurewicz homomorphism
$$
\operatorname{Hur}_{n-1 \ast} \colon \pi_{n-1}(\operatorname{cone}(F_{<}), \operatorname{pt}) \otimes_{\mathbb{Z}} \mathbb{Q} \rightarrow H_{n-1}(\operatorname{cone}(F_{<})).
$$
\end{enumerate}
Furthermore, if either of the above statements holds, then $\mathcal{O}_{i}(\pi, k, l) = 0$ for all $i$.
\end{theorem}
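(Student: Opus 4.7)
The main tool will be the long exact sequence of the triple $(Y, \operatorname{cone}(F_<), E)$, where $Y := \operatorname{cone}(F_<) \cup_\phi D^n$. Two identifications are decisive. First, by excision, $H_n(Y, \operatorname{cone}(F_<)) \cong H_n(D^n, S^{n-1}) \cong \mathbb{Q}$. Second, the connecting morphism $H_n(Y, \operatorname{cone}(F_<)) \to H_{n-1}(\operatorname{cone}(F_<))$ of the pair $(Y, \operatorname{cone}(F_<))$ sends the generator to $\operatorname{Hur}_{n-1 \ast}[\phi]$, and naturality of the triple LES yields a commuting square relating $j_\ast \partial[e_\phi] \in H_{n-1}(\operatorname{cone}(F_<))$ to the image of $[e_\phi] \in H_n(Y, E)$ in $H_n(Y, \operatorname{cone}(F_<))$, where $j \colon E \hookrightarrow \operatorname{cone}(F_<)$.

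For (i) $\Rightarrow$ (ii), I would push $[e_\phi]$ along the triple LES into $H_n(Y, \operatorname{cone}(F_<)) \cong \mathbb{Q}$, obtaining a rational multiple $q$ of the generator; the commuting square then forces $j_\ast[E] = q \cdot \operatorname{Hur}_{n-1 \ast}[\phi]$, so $j_\ast[E]$ lies in the image of $\operatorname{Hur}_{n-1 \ast}$. For (ii) $\Rightarrow$ (i), clearing denominators in the hypothesis yields an integer $N$ and a pointed map $\phi \colon (S^{n-1}, s_0) \to (\operatorname{cone}(F_<), \operatorname{pt})$ with $\operatorname{Hur}_{n-1}[\phi] = N \cdot j_\ast[E]$; set $Y := \operatorname{cone}(F_<) \cup_\phi D^n$. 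Exactness of the triple LES, together with the vanishing of $j_\ast[E]$ in $H_{n-1}(\operatorname{cone}(F_<), E)$, then exhibits a class in $H_n(Y, E)$ that lifts a rational multiple of the generator of $H_n(Y, \operatorname{cone}(F_<))$; after adjusting by a class in $H_n(\operatorname{cone}(F_<), E)$, one obtains $[e_\phi]$ with $\partial[e_\phi] = [E]$. To verify that $(Y, E, [e_\phi])$ is a rational Poincar\'e duality pair, I would link the cohomological and homological long exact sequences of $(Y, E)$ by the cap product with $[e_\phi]$ and apply the five lemma degree-by-degree: Poincar\'e duality for the closed oriented manifold $E$ (noting that $\partial[e_\phi] = [E]$) controls the $E$-terms, while the base cases $H^0(Y) \cong H_n(Y, E)$ and $H^n(Y) \cong H_0(Y, E)$ follow from connectedness and the normalization of $[e_\phi]$.

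For the vanishing of the local duality obstructions, fix $x \in \widetilde{H}^i(\operatorname{cone}(F_<))$ and $y \in \widetilde{H}^{n-1-i}(\operatorname{cone}(F_<))$ with $1 \leq i \leq n-2$. The cofibre sequence $S^{n-1} \xrightarrow{\phi} \operatorname{cone}(F_<) \to Y \to S^n$ implies that the restriction $H^j(Y) \to H^j(\operatorname{cone}(F_<))$ is an isomorphism for $j \leq n-2$ and injective for $j = n-1$. Hence $x, y$ lift uniquely to $\tilde{x}, \tilde{y} \in H^\ast(Y)$, and $x \cup y$ agrees with the restriction of $\tilde{x} \cup \tilde{y} \in H^{n-1}(Y)$; by injectivity, $x \cup y = 0$ is equivalent to $\tilde{x} \cup \tilde{y} = 0$. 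Using Lefschetz duality for the PD pair $(Y, E)$, the latter reduces to $(\tilde{x} \cup \tilde{y}) \cap [e_\phi] = 0$ in $H_1(Y, E)$. My plan is to push this class forward to $H_1(Y, \operatorname{cone}(F_<)) \cong H_1(D^n, S^{n-1}) = 0$ (using $n \geq 3$), concluding that it lies in the image of $H_1(\operatorname{cone}(F_<), E) \to H_1(Y, E)$, and then to identify $H_1(\operatorname{cone}(F_<), E) \cong \widetilde{H}_0(\operatorname{ft}_<E)$ via the cofibre sequence $\operatorname{ft}_<E \to E \to \operatorname{cone}(F_<)$, which vanishes when the truncated total space is path connected. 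The main technical obstacle is handling the case where $\operatorname{ft}_<E$ fails to be connected, which will require a more delicate naturality argument exploiting the construction of $[e_\phi]$ as a lift of $[E]$; this step is also where the homotopy-theoretic nature of the obstructions becomes visible.
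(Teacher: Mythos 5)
Your reduction of the existence of the lift $[e_{\phi}]$ with $\partial_{n}[e_{\phi}] = [E]$ to the Hurewicz condition via the triple $(\operatorname{cone}(F_{<})\cup_{\phi}D^{n},\operatorname{cone}(F_{<}),E)$ is fine (it is essentially \Cref{lemma hurewicz map} plus the equivalence $(ii)\Leftrightarrow(iii)$ of \Cref{proposition poincare duality pairs}), and your cup-product argument for the obstructions is plausible when $\operatorname{ft}_{<}E$ is connected. The genuine gap is the step where you claim that $(\operatorname{cone}(F_{<})\cup_{\phi}D^{n},E)$ is a Poincar\'{e} duality pair ``by linking the two long exact sequences of the pair by $-\cap[e_{\phi}]$ and applying the five lemma, with duality for the closed manifold $E$ controlling the $E$-terms.'' That ladder has \emph{two} families of maps besides $-\cap[E]$, namely $-\cap[e_{\phi}]\colon H^{r}(Y,E)\to H_{n-r}(Y)$ and $-\cap[e_{\phi}]\colon H^{r}(Y)\to H_{n-r}(Y,E)$; the five lemma needs one of these families to be known already in order to conclude the other, so the argument assumes exactly what has to be proved. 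Indeed, the statement you would be proving is false at this level of generality: take $E=S^{n-1}$, replace $\operatorname{cone}(F_{<})$ by $S^{n-1}\vee S^{m}$ with $1<m<n-1$, $2m\neq n$, and attach the $n$-cell along the inclusion of the first summand. Then $[E]$ lifts to $H_{n}(Y,E)$, but $H^{m}(Y)\cong\mathbb{Q}$ while $H_{n-m}(Y,E)=0$, so no such lift can be an orientation class. Hence duality of the pair cannot follow from duality of $E$ together with formal exact-sequence bookkeeping.

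What is missing is precisely the homological input that the paper extracts from the Banagl--Chriestenson machinery and that your proposal never touches: (1) the inclusion $E\subset\operatorname{cone}(F_{<})$ induces surjections $H_{r}(E)\to H_{r}(\operatorname{cone}(F_{<}))$ for all $r$, and (2) $\operatorname{rank}H_{r}(\operatorname{ft}_{<}E)=\operatorname{rank}\widetilde{H}_{n-1-r}(\operatorname{cone}(F_{<}))$ for all $r$. These come from the auxiliary space $Q_{\geq}E$ and its comparison with $\operatorname{cone}(F_{<})$ (Lemma 6.6 and Propositions 6.5, 6.7 in \cite{bc}), and this is the only place where the good open cover of $B$, the bundle structure, and the fact that $f_{<}$ is an equivariant Moore approximation in the complementary degrees $k,l$ with $k+l=c+1$ enter; your proposal uses none of these hypotheses, which is a clear sign it cannot close. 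With (1) and (2) in hand, the paper's \Cref{proposition poincare duality pairs} deduces duality of the pair from injectivity of $\operatorname{incl}^{\ast}\colon H^{r}(Y)\to H^{r}(E)$ (dual to (1)) together with a rank count (from (2)), not from a five-lemma ladder. Separately, in the obstruction part you yourself flag the unresolved case $\widetilde{H}_{0}(\operatorname{ft}_{<}E)\neq 0$; the paper avoids any connectivity assumption on $E$ by identifying the duality isomorphism with the one of Proposition 6.9 in \cite{bc}, from which the vanishing of $\mathcal{O}_{\ast}(\pi,k,l)$ follows for disconnected $E$ as well.
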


\begin{remark}\label{remark independence of attaching map}
If statement $(i)$ of \Cref{main theorem} holds and $E$ is connected, then the duality isomorphism
$$
- \cap [e_{\phi}] \colon H^{r}(\operatorname{cone}(F_{<}) \cup_{\phi} D^{n}) \stackrel{\cong}{\longrightarrow} H_{n-r}(\operatorname{cone}(F_{<}) \cup_{\phi} D^{n}, E)
$$
does not depend on the choice of the attaching map $\phi$ in the following sense.
In degrees $r = 0$ and $r = n$ the isomorphism $- \cap [e_{\phi}]$ is canonical because $1 \in H^{0}(\operatorname{cone}(F_{<}) \cup_{\phi} D^{n})$ is a generator, and $H_{0}(\operatorname{cone}(F_{<}) \cup_{\phi} D^{n}, E) = 0$.
Moreover, in degree $r = n-1$ the isomorphism $- \cap [e_{\phi}]$ is canonical because $H^{n-1}(\operatorname{cone}(F_{<}) \cup_{\phi} D^{n}) \cong \widetilde{H}_{0}(E) = 0$ by \Cref{remark homology calculations} and \Cref{remark connecting homomorphisms} applied to the map $f \colon Y \rightarrow Z$ given by $F_{<} \colon \operatorname{ft}_{<} E \rightarrow E$, and to its mapping cone $X = \operatorname{cone}(f) = \operatorname{cone}(F_{<})$.
(This is the only point where we use that $E$ is connected.)
Finally, in degrees $r \in \{1, \dots, n-2\}$ it follows from the last part of the proof of \Cref{main theorem} (see \Cref{proof of main theorem}) that there exist isomorphisms
$$
D_{r} \colon H^{r}(\operatorname{cone}(F_{<})) \stackrel{\cong}{\longrightarrow} H_{n-r}(\operatorname{cone}(F_{<}), E)
$$
such that for every attaching map $\phi$ there is the following commutative diagram:
\begin{center}
\begin{tikzpicture}

\path (0,0) node(a) {$H^{r}(\operatorname{cone}(F_{<}))$}
         (6, 0) node(b) {$H_{n-r}(\operatorname{cone}(F_{<}), E)$}
         (0, -2) node(c) {$H^{r}(\operatorname{cone}(F_{<}) \cup_{\phi} D^{n})$}
         (6, -2) node(d) {$H_{n-r}(\operatorname{cone}(F_{<}) \cup_{\phi} D^{n}, E)$.};

\draw[->] (c) -- node[left] {$\operatorname{incl}^{\ast} \; \cong$} (a);
\draw[->] (a) -- node[above] {$D_{r} \quad \cong$} (b);
\draw[->] (b) -- node[right] {$\cong \; \operatorname{incl}_{\ast}$} (d);
\draw[->] (c) -- node[above] {$- \cap [e_{\phi}]$} (d);
\end{tikzpicture}
\end{center}
\end{remark}

\begin{remark}[CW structures]\label{remark cw structures}
In applications it might be necessary to know that the Poincar\'{e} duality pair provided by statement (i) in \Cref{main theorem} is a CW pair.
To achieve this, we assume that the spaces $B$, $L$, and $L_{<}$ carry CW structures, and that the map $f_{<} \colon L_{<} \rightarrow L$ is cellular.
Then, note that $E$ and $\operatorname{ft}_{<} E$ inherit natural CW structures in such a way that the induced fiberwise truncation $F_{<} \colon \operatorname{ft}_{<} E \rightarrow E$ is cellular.
Thus, the truncation cone $\operatorname{cone}(F_{<})$ has a cell structure.
Finally, by applying the cellular approximation theorem to the map $\phi$ in \Cref{lemma hurewicz map}, we may without loss of generality assume that the map $\phi$ in statement (i) of \Cref{main theorem} is cellular.
Hence, it follows that $(\operatorname{cone}(F_{<}) \cup_{\phi} D^{n}, E)$ is a CW pair.
\end{remark}

\subsection{The rational Hurewicz homomorphism}
The following lemma is a slight modification of Lemma 3.8 in \cite[p. 248]{klim}, and will be used in the proofs of \Cref{main theorem} (see \Cref{proof of main theorem}) and \Cref{main corollary}.

\begin{lemma}\label{lemma hurewicz map}
Let $n \geq 3$ be an integer.
Let $\phi \colon (S^{n-1}, s_{0}) \rightarrow (X, x_{0})$ be a map of pointed spaces, and let $X^{\phi} = \operatorname{cone}(\phi)$ denote the associated mapping cone.
Then for any class $x \in H_{n-1}(X)$ the following statements are equivalent:
\begin{enumerate}[$(i)$]
\item There exists $q \in \mathbb{Q}$ such that the element $[\phi] \otimes q \in \pi_{n-1}(X, x_{0}) \otimes_{\mathbb{Z}} \mathbb{Q}$ is mapped to the class $x \in H_{n-1}(X)$ under the Hurewicz homomorphism
$$
\operatorname{Hur}_{n-1 \ast} \colon \pi_{n-1}(X, x_{0}) \otimes_{\mathbb{Z}}  \mathbb{Q} \rightarrow H_{n-1}(X).
$$
\item The class $x \in H_{n-1}(X)$ lies in the image of the connecting homomorphism
$$
\partial_{n} \colon H_{n}(X^{\phi}, X) \rightarrow H_{n-1}(X).
$$
\end{enumerate}
\end{lemma}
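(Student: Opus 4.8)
\subsection*{Proof proposal}

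The plan is to identify the mapping cone $X^{\phi}=\operatorname{cone}(\phi)$ with the adjunction space $X\cup_{\phi}D^{n}$ obtained by attaching an $n$-cell to $X$ along $\phi$, and then to express both conditions $(i)$ and $(ii)$ in terms of the single homology class $\phi_{\ast}[S^{n-1}]\in H_{n-1}(X)$, where $[S^{n-1}]$ denotes the fundamental class of the sphere. Since the (unreduced) cone on $S^{n-1}$ is homeomorphic to $D^{n}$ with $S^{n-1}=\partial D^{n}$, the mapping cone $X^{\phi}$ is literally $X\cup_{\phi}D^{n}$, equipped with a characteristic map of pairs $\Phi\colon(D^{n},S^{n-1})\to(X^{\phi},X)$ that restricts to $\phi$ on $S^{n-1}$. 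As $S^{n-1}\hookrightarrow D^{n}$ is a cofibration, its pushout $X\hookrightarrow X^{\phi}$ is a cofibration as well, and $\Phi$ induces a homeomorphism $D^{n}/S^{n-1}\xrightarrow{\ \cong\ }X^{\phi}/X$; identifying the relative homology of these good pairs with the reduced homology of their quotients then shows that $\Phi_{\ast}\colon H_{n}(D^{n},S^{n-1})\to H_{n}(X^{\phi},X)$ is an isomorphism, both groups being isomorphic to $\mathbb{Q}$. I would favour this cofibration argument over an appeal to cellular homology, since $X$ is not assumed to carry a CW structure.

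Next I would compare, via $\Phi$, the long exact homology sequences of the pairs $(D^{n},S^{n-1})$ and $(X^{\phi},X)$. The naturality of the connecting homomorphisms yields the commutative square
\begin{displaymath}
\begin{array}{ccc}
H_{n}(D^{n},S^{n-1}) & \xrightarrow{\ \partial\ } & H_{n-1}(S^{n-1})\\
\Big\downarrow{\scriptstyle\Phi_{\ast}} & & \Big\downarrow{\scriptstyle\phi_{\ast}}\\
H_{n}(X^{\phi},X) & \xrightarrow{\ \partial_{n}\ } & H_{n-1}(X)
\end{array}
\end{displaymath}
in which $\partial$ is an isomorphism (because $D^{n}$ is contractible and $n-1\geq 1$) and $\Phi_{\ast}$ is an isomorphism by the previous step. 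Hence, using commutativity and surjectivity of $\Phi_\ast$ and $\partial$,
\begin{displaymath}
\operatorname{image}(\partial_{n})=\partial_{n}\bigl(\operatorname{image}\Phi_{\ast}\bigr)=\phi_{\ast}\bigl(\operatorname{image}\partial\bigr)=\phi_{\ast}\bigl(H_{n-1}(S^{n-1})\bigr)=\mathbb{Q}\cdot\phi_{\ast}[S^{n-1}],
\end{displaymath}
so the classes $x\in H_{n-1}(X)$ satisfying $(ii)$ are precisely those lying on the rational line spanned by $\phi_{\ast}[S^{n-1}]$.

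It then remains to match this description with $(i)$. By the definition of the absolute Hurewicz homomorphism, $\operatorname{Hur}_{n-1}([\phi])$ equals $\phi_{\ast}[S^{n-1}]$ up to the chosen generator of $H_{n-1}(S^{n-1};\mathbb{Z})$ — an ambiguity that is immaterial here, since $n-1\geq 2$ (so that $\pi_{n-1}(X,x_{0})$ is abelian) and we only compare rational spans. Consequently $\operatorname{Hur}_{n-1\,\ast}([\phi]\otimes q)=q\cdot\phi_{\ast}[S^{n-1}]$ for every $q\in\mathbb{Q}$, so that the classes satisfying $(i)$ are again exactly those on the line $\mathbb{Q}\cdot\phi_{\ast}[S^{n-1}]$; the equivalence $(i)\Leftrightarrow(ii)$ follows. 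This is essentially Klimczak's proof of Lemma 3.8 in \cite{klim}; the only steps demanding a little care are the isomorphism $\Phi_{\ast}$ in the first paragraph and the bookkeeping of generators in the long exact sequence ladder, and I anticipate no serious obstacle.
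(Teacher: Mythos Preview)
Your proof is correct and follows essentially the same approach as the paper: both arguments hinge on the characteristic map $\Phi\colon(D^{n},S^{n-1})\to(X^{\phi},X)$, the identification $H_{n}(X^{\phi},X)\cong\mathbb{Q}$ generated by $\Phi_{\ast}$, and the observation that $\partial_{n}$ sends this generator to $\phi_{\ast}[S^{n-1}]=\operatorname{Hur}_{n-1}([\phi])$. The only cosmetic difference is that the paper packages this via the naturality square relating the relative Hurewicz map $\operatorname{Hur}_{n\ast}\colon\pi_{n}(X^{\phi},X,x_{0})\otimes\mathbb{Q}\to H_{n}(X^{\phi},X)$ to the homotopy and homology boundary maps, whereas you use the purely homological ladder for $\Phi$; these are equivalent formulations of the same computation.
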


\begin{proof}
Consider the commutative diagram

\begin{center}
\begin{tikzpicture}

\path (0,0) node(a) {$\pi_{n}\left(X^{\phi}, X, x_{0}\right) \otimes_{\mathbb{Z}}  \mathbb{Q}$}
         (4, 0) node(b) {$\pi_{n-1}\left(X, x_{0}\right) \otimes_{\mathbb{Z}}  \mathbb{Q}$}
         (0, -2) node(c) {$H_{n}\left(X^{\phi}, X\right)$}
         (4, -2) node(d) {$H_{n-1}\left(X\right)$.};

\draw[->] (a) -- node[left] {$\operatorname{Hur}_{n \ast}$} (c);
\draw[->] (a) -- node[above] {$\partial_{n}$} (b);
\draw[->] (b) -- node[right] {$\operatorname{Hur}_{n-1 \ast}$} (d);
\draw[->] (c) -- node[above] {$\partial_{n}$} (d);
\end{tikzpicture}
\end{center}

By construction of $X^{\phi}$ we have a map of pointed pairs
$$
(\Phi, \phi) \colon (D^{n}, S^{n-1}, s_{0}) \rightarrow (X^{\phi}, X, x_{0}).
$$
Consider the induced element $[\Phi] \otimes 1 \in \pi_{n}\left(X^{\phi}, X, x_{0}\right) \otimes_{\mathbb{Z}}  \mathbb{Q}$.
The homomorphism
$$
\partial_{n} \colon \pi_{n}\left(X^{\phi}, X, x_{0}\right) \otimes_{\mathbb{Z}}  \mathbb{Q} \rightarrow \pi_{n-1}\left(X, x_{0}\right) \otimes_{\mathbb{Z}}  \mathbb{Q}
$$
maps $[\Phi] \otimes 1$ to the element $[\phi] \otimes 1 \in \pi_{n-1}\left(X, x_{0}\right) \otimes_{\mathbb{Z}}  \mathbb{Q}$.
The homomorphism
$$
\operatorname{Hur}_{n \ast} \colon \pi_{n}\left(X^{\phi}, X, x_{0}\right) \otimes_{\mathbb{Z}}  \mathbb{Q} \rightarrow H_{n}\left(X^{\phi}, X\right)
$$
maps $[\Phi] \otimes 1$ to a generator of $H_{n}\left(X^{\phi}, X\right) \cong \tilde{H}_{n}(X^{\phi}/X) \cong \tilde{H}_{n}(S^{n}) \cong \mathbb{Q}$.
(Here, the isomorphism $H_{n}(X^{\phi}, X) \cong \widetilde{H}_{n}(X^{\phi}/ X)$ holds by Proposition 2.22 in \cite[p. 124]{hat}.)
Thus, if statement $(i)$ holds, then we have
$$
x = \operatorname{Hur}_{n-1 \ast}([\phi] \otimes q) = \operatorname{Hur}_{n-1 \ast}(\partial_{n}([\Phi] \otimes q)) = \partial_{n}(\operatorname{Hur}_{n \ast}([\Phi] \otimes q)),
$$
and statement $(ii)$ follows.
Conversely, if statement $(ii)$ holds, then there exists $q \in \mathbb{Q}$ such that
$$
x = \partial_{n}(q \cdot \operatorname{Hur}_{n \ast}([\Phi] \otimes 1)) = \operatorname{Hur}_{n-1 \ast}(\partial_{n}([\Phi] \otimes q)) = \operatorname{Hur}_{n-1 \ast}([\phi] \otimes q),
$$
and statement $(i)$ follows.
\end{proof}

\begin{remark}
By passing to the abelianization $\pi_{1}(X, x_{0}) \rightarrow \pi_{1}(X, x_{0})_{ab}$, we can state \Cref{lemma hurewicz map} as well for $n = 2$.
However, the integer $n$ will arise in our applications in \Cref{intersection spaces and the signature} as the dimension of depth one pseudomanifolds.
As their singular strata are required to have codimension at least $2$ (see \Cref{Witt spaces and signature}) and the case of point strata is covered by \cite{klim}, we will generally assume that $n \geq 3$ throughout the paper.
\end{remark}

\subsection{Rational Poincar\'{e} duality pairs}
We recall the fundamental concept of Poincar\'{e} duality pairs of spaces (compare Section 3.1 in \cite{klim} and Section II.2 in \cite{bro}).
In this paper we do not require cell structures on our spaces, but see \Cref{remark cw structures}.

\begin{definition}\label{definition poincare duality pair}
A pair $(A, B)$ is called \emph{(rational) Poincar\'{e} duality pair of dimension $n$} if
\begin{enumerate}[(i)]
\item all homology groups $H_{r}(A)$ and $H_{r}(B)$, $r \in \mathbb{Z}$, are of finite rank, and
\item there exists a class $a \in H_{n}(A, B)$ such that
$$
- \cap a \colon H^{r}(A) \rightarrow H_{n-r}(A, B)
$$
is an isomorphism for all $r \in \mathbb{Z}$.
Any such class $a \in H_{n}(A, B)$ will be called an \emph{orientation class} for $(A, B)$.
\end{enumerate}
\end{definition}

\begin{remark}\label{remark poincare complex}
It is well-known (see Remark 3.2 in \cite[p. 245]{klim} and Corollary 1.2.3 in \cite[p. 8]{bro}) that for a Poincar\'{e} duality pair $(A, B)$ of dimension $n$ which is equipped with an orientation class $a \in H_{n}(A, B)$, the associated \emph{oriented boundary} $A = (A, \emptyset)$ is a Poincar\'{e} duality pair of dimension $n-1$ by means of the orientation class $\partial_{n} a \in H_{n-1}(A)$.

In the following, a Poincar\'{e} duality pair of the form $A = (A, \emptyset)$ will be called a \emph{Poincar\'{e} space}.

\end{remark}

We state our main technical result, which is a careful extension of Lemma 3.7 in \cite[p. 247]{klim}.
Our purpose is to cover also the case of depth one pseudomanifolds having non-isolated singular strata as considered in \cite{bc}.

\begin{proposition}\label{proposition poincare duality pairs}
Let $Z$ be a Poincar\'{e} space of dimension $n-1 > 0$ with orientation class $[Z] \in H_{n-1}(Z)$.
Let $f \colon Y \rightarrow Z$ be a map from a space $Y$ to $Z$ such that the induced map $f_{\ast} \colon H_{0}(Y) \rightarrow H_{0}(Z)$ is bijective, and let $X = \operatorname{cone}(f)$ denote the mapping cone of $f$.
Suppose that for every $r \in \mathbb{Z}$ the following conditions hold:
\begin{enumerate}[$(1)$]
\item The inclusion $Z \subset X$ induces a surjective homomorphism $H_{r}(Z) \rightarrow H_{r}(X)$.
\item The rational vector spaces $H_{r}(Y)$ and $\widetilde{H}_{n-r-1}(X)$ have the same rank.
\end{enumerate}
Then, for any space $X^{\phi} = \operatorname{cone}(\phi)$ given by the mapping cone of some map $\phi \colon S^{n-1} \rightarrow X$ the following statements are equivalent:
\begin{enumerate}[$(i)$]
\item There exists a lift $[e_{\phi}] \in H_{n}(X^{\phi}, Z)$ of the orientation class $[Z] \in H_{n-1}(Z)$ such that the pair  $(X^{\phi}, Z)$ is a Poincar\'{e} duality pair of dimension $n$ with orientation class $[e_{\phi}]$.
\item The orientation class $[Z] \in H_{n-1}(Z)$ of $Z$ lies in the image of the connecting homomorphism $\partial_{n} \colon H_{n}(X^{\phi}, Z) \rightarrow H_{n-1}(Z)$.
\item The image of the orientation class $[Z] \in H_{n-1}(Z)$ of $Z$ under the map $H_{n-1}(Z) \rightarrow H_{n-1}(X)$ induced by the inclusion $Z \subset X$ lies in the image of the connecting homomorphism $\partial_{n} \colon H_{n}(X^{\phi}, X) \rightarrow H_{n-1}(X)$.
\end{enumerate}
\end{proposition}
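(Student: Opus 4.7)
The plan is to prove the cycle $(i) \Rightarrow (ii) \Rightarrow (iii) \Rightarrow (ii) \Rightarrow (i)$: $(i) \Rightarrow (ii)$ is immediate, $(ii) \Leftrightarrow (iii)$ consists of diagram chases, and $(ii) \Rightarrow (i)$ is the main technical content, extending the structure of Lemma 3.7 in \cite{klim} to allow $Z$ to be an arbitrary Poincar\'e space rather than a sphere.

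The implication $(i) \Rightarrow (ii)$ holds by definition: any lift $[e_\phi]$ satisfies $\partial [e_\phi] = [Z]$. For $(ii) \Rightarrow (iii)$, the inclusion of pairs $(X^\phi, Z) \hookrightarrow (X^\phi, X)$ induces a commutative square in the long exact sequences sending a lift of $[Z]$ in $H_n(X^\phi, Z)$ to a lift of $i_*[Z]$ in $H_n(X^\phi, X)$. The converse $(iii) \Rightarrow (ii)$ requires more care: given $\alpha \in H_n(X^\phi, X)$ with $\partial \alpha = i_*[Z]$, the triple long exact sequence of $(X^\phi, X, Z)$ lifts $\alpha$ to some $\beta \in H_n(X^\phi, Z)$, because the obstruction in $H_{n-1}(X, Z)$ factors as $q_*(i_*[Z])$ and vanishes by exactness of the pair $(X, Z)$. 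Naturality then forces $\partial \beta - [Z] \in \ker i_* = \operatorname{im}(\partial \colon H_n(X, Z) \to H_{n-1}(Z))$, using condition $(1)$, and adjusting $\beta$ by a class pulled back along $H_n(X, Z) \to H_n(X^\phi, Z)$ (whose boundaries agree by naturality) produces a corrected lift with boundary exactly $[Z]$.

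The main step is $(ii) \Rightarrow (i)$. Finite-rankness of the groups $H_r(X^\phi)$ follows routinely from finite-rankness of $H_r(Z)$, the long exact sequences of $(X^\phi, X)$ and $(X, Z)$, and the identification $H_r(X, Z) \cong \widetilde{H}_{r-1}(Y)$ arising from $X = \operatorname{cone}(f)$. For the duality isomorphism $- \cap [e_\phi]$, the plan is to set up the commutative Poincar\'e-Lefschetz ladder pairing the cohomology and shifted homology long exact sequences of $(X^\phi, Z)$: vertical maps are $- \cap [e_\phi]$ at the $X^\phi$- and relative-terms and $- \cap [Z]$ at the $Z$-terms, with commutativity guaranteed by $\partial [e_\phi] = [Z]$. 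The $- \cap [Z]$ verticals are isomorphisms by the PD property of $Z$. The induction can be anchored in degree $r = 0$: the bijection $f_* \colon H_0(Y) \to H_0(Z)$ makes $X$ path connected, so condition $(2)$ in degree $n-1$ gives $H_{n-1}(Y) = 0$, hence $H_n(X, Z) = 0$, and, together with condition $(1)$, the long exact sequences then force $H_n(X^\phi, Z) \cong \mathbb{Q}$ generated by $[e_\phi]$. This makes $- \cap [e_\phi] \colon H^0(X^\phi) \to H_n(X^\phi, Z)$ an isomorphism, from which iterated five-lemma applications propagate the isomorphism through the ladder in all degrees.

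The main obstacle is the five-lemma propagation. The ladder has $- \cap [Z]$ at every third position (known iso), with the two versions of $- \cap [e_\phi]$ interleaved at the remaining positions, so one must alternate five-lemma applications between the two cap-product chains $H^r(X^\phi, Z) \to H_{n-r}(X^\phi)$ and $H^r(X^\phi) \to H_{n-r}(X^\phi, Z)$, using conditions $(1)$ and $(2)$ to verify the injectivity/surjectivity hypotheses needed at each step. The rank-matching encoded by condition $(2)$ plays the subtle role of guaranteeing that the single attached cell supplies precisely the correction required near the critical degrees $n-1$ and $n$, where without the attachment the pair $(X, Z)$ would fail duality.
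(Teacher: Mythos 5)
Your preliminary steps are sound: $(i)\Rightarrow(ii)$ is immediate, $(ii)\Rightarrow(iii)$ is the naturality square, and your $(iii)\Rightarrow(ii)$ via the exact sequence of the triple $(X^{\phi},X,Z)$ together with a correction class coming from $H_{n}(X,Z)$ is a valid chase (it in fact uses only exactness and naturality, not condition $(1)$; the paper instead proves $(ii)\Leftrightarrow(iii)$ by showing that $H_{n}(X^{\phi},Z)\rightarrow H_{n}(X^{\phi},X)$ is an isomorphism, using conditions $(1)$ and $(2)$ and the five lemma). Your anchor computation is also essentially right: since the mapping cone has a single cone point and $f_{\ast}$ is bijective on $H_{0}$, $X$ is path connected, condition $(2)$ at $r=n-1$ gives $H_{n-1}(Y)=0$, hence $H_{n}(X,Z)\cong\widetilde{H}_{n-1}(Y)=0$ and $H_{n}(X^{\phi},Z)\cong\mathbb{Q}$ generated by $[e_{\phi}]$.

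The genuine gap is in your main step $(ii)\Rightarrow(i)$. The Poincar\'{e}--Lefschetz ladder for $(X^{\phi},Z)$ contains two interleaved families of unknown vertical maps, $A_{r}=-\cap[e_{\phi}]\colon H^{r}(X^{\phi})\rightarrow H_{n-r}(X^{\phi},Z)$ and $B_{r}=-\cap[e_{\phi}]\colon H^{r}(X^{\phi},Z)\rightarrow H_{n-r}(X^{\phi})$, separated by the known isomorphisms $-\cap[Z]$. Any five- or four-lemma conclusion about $A_{r}$ requires the neighboring maps $B_{r}$ and $B_{r+1}$, while any such conclusion about $B_{r}$ requires $A_{r-1}$ and $A_{r}$; the single anchor $A_{0}$ therefore cannot start a propagation, and conditions $(1)$, $(2)$ do not by themselves enter these chases, so ``alternating five-lemma applications'' is circular as stated -- you never actually produce injectivity or surjectivity of any $A_{r}$ or $B_{r}$, and that is precisely the missing idea. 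What the paper does instead (and what your conditions are really for) is: by Proposition 1.1.4(ii) of \cite{bro}, $\partial_{n-r}(x\cap[e_{\phi}])=(x|_{Z})\cap[Z]$; condition $(1)$, upgraded to surjectivity of $H_{r}(Z)\rightarrow H_{r}(X^{\phi})$ (which at $r=n$ needs the injectivity of $\partial_{n}\colon H_{n}(X^{\phi},X)\rightarrow H_{n-1}(X)$), dualizes over $\mathbb{Q}$ to injectivity of $H^{r}(X^{\phi})\rightarrow H^{r}(Z)$, and composing with the isomorphism $-\cap[Z]$ makes $A_{r}$ injective; then one shows, using $H_{r}(X,Z)\cong\widetilde{H}_{r-1}(Y)$, the comparison of the pairs $(X,Z)$ and $(X^{\phi},Z)$, and condition $(2)$, that $H^{r}(X^{\phi})$ and $H_{n-r}(X^{\phi},Z)$ have the same finite rank, so the injection $A_{r}$ is an isomorphism. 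Until you supply an argument of this type (or some other independent way to establish one of the two families of cap maps), the central implication remains unproven; note also that the statement only requires the family $A_{r}$ to consist of isomorphisms, so the rank-plus-injectivity route avoids ever touching $B_{r}$.
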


\begin{proof}
$(ii) \Leftrightarrow (iii)$.
The inclusion of pairs $(X^{\phi}, Z) \subset (X^{\phi}, X)$ induces the commutative diagram

\begin{center}
\begin{tikzpicture}

\path (0,0) node(a1) {$H_{n}(Z)$}
         (2.5, 0) node(a2) {$H_{n}(X^{\phi})$}
         (5, 0) node(a3) {$H_{n}(X^{\phi}, Z)$}
         (7.5, 0) node(a4) {$H_{n-1}(Z)$}
         (10, 0) node(a5) {$H_{n-1}(X^{\phi})$}
         (0,-1.5) node(b1) {$H_{n}(X)$}
         (2.5, -1.5) node(b2) {$H_{n}(X^{\phi})$}
         (5, -1.5) node(b3) {$H_{n}(X^{\phi}, X)$}
         (7.5, -1.5) node(b4) {$H_{n-1}(X)$}
         (10, -1.5) node(b5) {$H_{n-1}(X^{\phi})$.};

\draw[->] (a1) -- node[above] {} (a2);
\draw[->] (a2) -- node[above] {} (a3);
\draw[->] (a3) -- node[above] {$\partial_{n}$} (a4);
\draw[->] (a4) -- node[above] {} (a5);

\draw[->] (b1) -- node[above] {} (b2);
\draw[->] (b2) -- node[above] {} (b3);
\draw[->] (b3) -- node[above] {$\partial_{n}$} (b4);
\draw[->] (b4) -- node[above] {} (b5);

\draw[->] (a1) -- node[left] {$\eta$} (b1);
\draw[->] (a2) -- node[left] {$=$} (b2);
\draw[->] (a3) -- node[left] {$\zeta$} (b3);
\draw[->] (a4) -- node[left] {$\xi$} (b4);
\draw[->] (a5) -- node[left] {$=$} (b5);
\end{tikzpicture}
\end{center}
Let us show that $\eta$ and $\xi$ are isomorphisms.
First, note that $\eta$ and $\xi$ are surjective by assumption $(1)$ applied for $r = n$ and $r = n-1$, respectively.
Next, observe that $H_{n}(Z) \cong H_{-1}(Z) = 0$ and $H_{n-1}(Z) \cong H_{0}(Z)$ because $Z$ is a Poincar\'{e} space of dimension $n-1$.
Moreover, using that $H_{0}(Y) \cong H_{0}(Z)$ and that $n-1 > 0$, we obtain $H_{n-1}(X) = \widetilde{H}_{n-1}(X) \cong H_{0}(Y) \cong H_{0}(Z)$ by assumption $(2)$ applied for $r = 0$.
All in all, we have shown that $\eta$ and $\xi$ are isomorphisms.
Finally, the five lemma implies that the map $\zeta$ in the above diagram is an isomorphism as well, and the equivalence $(ii) \Leftrightarrow (iii)$ follows.

\begin{remark}\label{remark connecting homomorphisms}
For future reference we note that assuming either $(ii)$ or $(iii)$ in \Cref{proposition poincare duality pairs}, we can show that the connecting homomorphisms in the diagram

\begin{center}
\begin{tikzpicture}

\path (5, 0) node(a3) {$H_{n}(X^{\phi}, Z)$}
         (7.5, 0) node(a4) {$H_{n-1}(Z)$}
         (5, -1.5) node(b3) {$H_{n}(X^{\phi}, X)$}
         (7.5, -1.5) node(b4) {$H_{n-1}(X)$};

\draw[->] (a3) -- node[above] {$\partial_{n}$} (a4);
\draw[->] (b3) -- node[above] {$\partial_{n}$} (b4);
\draw[->] (a3) -- node[left] {$\cong$} (b3);
\draw[->] (a4) -- node[left] {$\cong$} (b4);
\end{tikzpicture}
\end{center}
are both injective because $H_{n}(X^{\phi}, Z) \cong H_{n}(X^{\phi}, X) \cong \mathbb{Q}$, and they are non-trivial.
(Indeed, observe that we have
$$
H_{n}(X^{\phi}, X) \cong \widetilde{H}_{n}(X^{\phi}/ X) \cong \widetilde{H}_{n}(S^{n}) \cong \mathbb{Q}.
$$
Here, the isomorphism $H_{n}(X^{\phi}, X) \cong \widetilde{H}_{n}(X^{\phi}/ X)$ holds by Proposition 2.22 in \cite[p. 124]{hat}.
Note that for any map $\alpha \colon C \rightarrow D$, the pair $(\operatorname{cone}(\alpha), D)$ is a \emph{good pair}, that is, $D$ is a nonempty closed subset of $\operatorname{cone}(\alpha)$ that is a deformation retract of some neighborhood in $\operatorname{cone}(\alpha)$.)
\end{remark}

$(i) \Rightarrow (ii)$.
This implication is clear because statement $(i)$ implies that $[Z] = \partial_{n}([e_{\phi}])$.

$(ii) \Rightarrow (i)$.
By statement $(ii)$ there exists an element $[e_{\phi}] \in H_{n}(X^{\phi}, Z)$ such that $\partial_{n}([e_{\phi}]) = [Z] \in H_{n-1}(Z)$.
(In fact, $e_{\phi}$ is uniquely determined because $\partial_{n}$ is injective by \Cref{remark connecting homomorphisms}.)
We claim that the homomorphism
$$
- \cap [e_{\phi}] \colon H^{r}(X^{\phi}) \rightarrow H_{n-r}(X^{\phi}, Z)
$$
is an isomorphism for every $r \in \mathbb{Z}$.
(It is clear by assumption $(1)$ that all rational homology groups of $X^{\phi}$ have finite rank, so that $(X^{\phi}, Z)$ will then be a Poincar\'{e} duality pair of dimension $n$ according to \Cref{definition poincare duality pair}.)
In general, observe that every element $\alpha \in H_{n}(X^{\phi}, Z)$ gives by Proposition 1.1.4(ii) in \cite[p. 4]{bro} rise to a commutative diagram

\begin{center}
\begin{tikzpicture}

\path (0,0) node(a) {$H^{r}(X^{\phi})$}
         (4, 0) node(b) {$H_{n-r}(X^{\phi}, Z)$}
         (0, -2) node(c) {$H^{r}\left(Z\right)$}
         (4, -2) node(d) {$H_{n-1-r}\left(Z\right)$.};

\draw[->] (a) -- node[left] {$\operatorname{incl}^{\ast}$} (c);
\draw[->] (a) -- node[above] {$- \cap \alpha$} (b);
\draw[->] (c) -- node[above] {$- \cap \partial_{n} \alpha$} (d);
\draw[->] (b) -- node[right] {$\partial_{n-r}$} (d);
\end{tikzpicture}
\end{center}
If we specialize to $\alpha = [e_{\phi}]$, then the lower horizontal homomorphism $- \cap [Z]$ is an isomorphism because $Z$ is a Poincar\'{e} space of dimension $n-1$.
Thus, in order to show that the upper horizontal row of the above diagram is an isomorphism, it suffices to verify that for every $r \in \mathbb{Z}$, the following two assertions hold:
\begin{enumerate}[$(a)$]
\item \emph{The inclusion $Z \subset X^{\phi}$ induces a surjective map $H_{r}\left(Z\right) \rightarrow H_{r}(X^{\phi})$.}

In view of assumption $(1)$, our claim $(a)$ is in fact equivalent to showing that the inclusion $X \subset X^{\phi}$ induces a surjective homomorphism $H_{r}\left(X\right) \rightarrow H_{r}(X^{\phi})$.
We compute
$$
H_{r}(X^{\phi}, X) \cong \widetilde{H}_{r}(X^{\phi}/ X) \cong \widetilde{H}_{r}(S^{n}) \cong \begin{cases}
\mathbb{Q}, \quad r = n, \\
0, \quad r \neq n.
\end{cases}
$$
Thus, for $r \neq n$, the claim follows from the exactness of
$$
H_{r}(X) \rightarrow H_{r}(X^{\phi}) \rightarrow H_{r}(X^{\phi}, X).
$$
For $r = n$ we consider the exact sequence
$$
H_{n}(X) \rightarrow H_{n}(X^{\phi}) \rightarrow H_{n}(X^{\phi}, X) \stackrel{\partial_{n}}{\rightarrow} H_{n-1}(X).
$$
Since the connecting homomorphism $\partial_{n}$ is injective by \Cref{remark connecting homomorphisms}, we conclude that the homomorphism $H_{n}(X) \rightarrow H_{n}(X^{\phi})$ is surjective.

\begin{remark}\label{remark homology calculations}
Suppose that either $(ii)$ or $(iii)$ holds in \Cref{proposition poincare duality pairs}.
Then, for future reference, we note that an inspection of the homology long exact sequence of the pair $(X^{\phi}, X)$ combined with the above information implies that the inclusion $X \subset X^{\phi}$ induces an isomorphism $H_{r}(X) \stackrel{\cong}{\longrightarrow} H_{r}(X^{\phi})$ for $r \neq n-1$, whereas the induced map $H_{n-1}(X) \rightarrow H_{n-1}(X^{\phi})$ is surjective with kernel isomorphic to $H_{n}(X^{\phi}, X) \cong \mathbb{Q}$.
\end{remark}

\item \emph{The rational vector spaces $H_{r}(X^{\phi})$ and $H_{n-r}(X^{\phi}, Z)$ have the same (finite) rank.}

In view of assumption $(2)$ and \Cref{remark homology calculations}, it suffices to show that
$$
H_{r}(X^{\phi}, Z) \cong \begin{cases}
\widetilde{H}_{r-1}(Y), \quad r \neq n, \\
H_{n-1}(Y) \oplus \mathbb{Q}, \quad r = n.
\end{cases}
$$
The inclusion of pairs $(X, Z) \subset (X^{\phi}, Z)$ induces the commutative diagram

\begin{center}
\begin{tikzpicture}

\path (0,0) node(a1) {$H_{r}(Z)$}
         (2.5, 0) node(a2) {$H_{r}(X)$}
         (5, 0) node(a3) {$H_{r}(X, Z)$}
         (7.5, 0) node(a4) {$H_{r-1}(Z)$}
         (10, 0) node(a5) {$H_{r-1}(X)$}
         (0,-1.5) node(b1) {$H_{r}(Z)$}
         (2.5, -1.5) node(b2) {$H_{r}(X^{\phi})$}
         (5, -1.5) node(b3) {$H_{r}(X^{\phi}, Z)$}
         (7.5, -1.5) node(b4) {$H_{r-1}(Z)$}
         (10, -1.5) node(b5) {$H_{r-1}(X^{\phi})$.};

\draw[->] (a1) -- node[above] {} (a2);
\draw[->] (a2) -- node[above] {} (a3);
\draw[->] (a3) -- node[above] {$\partial_{r}$} (a4);
\draw[->] (a4) -- node[above] {} (a5);

\draw[->] (b1) -- node[above] {} (b2);
\draw[->] (b2) -- node[above] {} (b3);
\draw[->] (b3) -- node[above] {$\partial_{r}$} (b4);
\draw[->] (b4) -- node[above] {} (b5);

\draw[->] (a1) -- node[left] {$=$} (b1);
\draw[->] (a2) -- node[left] {$$} (b2);
\draw[->] (a3) -- node[left] {$$} (b3);
\draw[->] (a4) -- node[left] {$=$} (b4);
\draw[->] (a5) -- node[left] {$$} (b5);
\end{tikzpicture}
\end{center}
Observe that for all $r \in \mathbb{Z}$,
$$
H_{r}(X, Z) \cong \widetilde{H}_{r}(X/Z) \cong \widetilde{H}_{r}(\Sigma Y) \cong \widetilde{H}_{r-1}(Y).
$$
(Here, the isomorphism $H_{r}(X, Z) \cong \widetilde{H}_{r}(X/Z)$ holds by Proposition 2.22 in \cite[p. 124]{hat} because $X$ is the cone of $f \colon Y \rightarrow Z$.)
Since by \Cref{remark homology calculations} the inclusion $X \subset X^{\phi}$ induces an isomorphism $H_{r}(X) \stackrel{\cong}{\longrightarrow} H_{r}(X^{\phi})$ for $r \neq n-1$, the claim follows for $r \notin \{n-1, n\}$ from the five lemma applied to the above diagram.
We check the remaining cases:
\begin{itemize}
\item In the case $r = n-1$, we note that in the above diagram the homomorphism $H_{n-1}(Z) \rightarrow H_{n-1}(X)$ is surjective by assumption $(1)$, and the homomorphism $H_{n-1}(X) \rightarrow H_{n-1}(X^{\phi})$ is surjective by \Cref{remark homology calculations}.
Thus, we obtain the following simplified commutative diagram with exact rows:
\begin{center}
\begin{tikzpicture}

\path (0,0) node(a1) {$0$}
         (1.5, 0) node(a2) {$0$}
         (4, 0) node(a3) {$H_{n-1}(X, Z)$}
         (6.5, 0) node(a4) {$H_{n-2}(Z)$}
         (9, 0) node(a5) {$H_{n-2}(X)$}
         (0,-1.5) node(b1) {$0$}
         (1.5, -1.5) node(b2) {$0$}
         (4, -1.5) node(b3) {$H_{n-1}(X^{\phi}, Z)$}
         (6.5, -1.5) node(b4) {$H_{n-2}(Z)$}
         (9, -1.5) node(b5) {$H_{n-2}(X^{\phi})$.};

\draw[->] (a1) -- node[above] {} (a2);
\draw[->] (a2) -- node[above] {} (a3);
\draw[->] (a3) -- node[above] {$\partial_{n-1}$} (a4);
\draw[->] (a4) -- node[above] {} (a5);

\draw[->] (b1) -- node[above] {} (b2);
\draw[->] (b2) -- node[above] {} (b3);
\draw[->] (b3) -- node[above] {$\partial_{n-1}$} (b4);
\draw[->] (b4) -- node[above] {} (b5);

\draw[->] (a1) -- node[left] {$$} (b1);
\draw[->] (a2) -- node[left] {$$} (b2);
\draw[->] (a3) -- node[left] {$$} (b3);
\draw[->] (a4) -- node[left] {$=$} (b4);
\draw[->] (a5) -- node[left] {$$} (b5);
\end{tikzpicture}
\end{center}
Finally, as the right vertical arrow $H_{n-2}(X) \rightarrow H_{n-2}(X^{\phi})$ is an isomorphism by \Cref{remark homology calculations}, the five lemma yields
$$
H_{n-1}(X^{\phi}, Z) \cong H_{n-1}(X, Z) \cong \widetilde{H}_{n-2}(Y).
$$
\item In the case $r = n$, we consider the following portion of the homology exact sequence of the pair $(X, Z)$:
$$
H_{n}(X) \rightarrow H_{n}(X, Z) \rightarrow H_{n-1}(Z) \rightarrow H_{n-1}(X).
$$
Note that $H_{n-1}(X) \cong H_{0}(Y)$ and $H_{n}(X) \cong 0$ by assumption $(2)$. As the homomorphism $H_{n-1}(Z) \rightarrow H_{n-1}(X)$ is surjective by assumption $(1)$, we obtain
$$
H_{n-1}(Z) \cong H_{n}(X, Z) \oplus H_{n-1}(X) \cong H_{n-1}(Y) \oplus H_{0}(Y).
$$
Hence, it follows from $H_{n-1}(Z) \cong H_{0}(Z) \cong H_{0}(Y)$ that $H_{n-1}(Y) = 0$.
On the other hand, $H_{n}(X^{\phi}, Z) \cong \mathbb{Q}$ according to \Cref{remark connecting homomorphisms}.
\end{itemize}
\begin{remark}\label{remark isomorphisms induced by inclusions}
Suppose that either $(ii)$ or $(iii)$ holds in \Cref{proposition poincare duality pairs}.
Then, for future reference, we note that the inclusion of pairs $(X, Z) \subset (X^{\phi}, Z)$ induces an isomorphism $H_{r}(X, Z) \stackrel{\cong}{\longrightarrow} H_{r}(X^{\phi}, Z)$ for $r \neq n$, whereas $H_{n}(X^{\phi}, Z) \cong H_{n}(X, Z) \oplus \mathbb{Q}$.
\end{remark}
\end{enumerate}
\end{proof}

\subsection{Proof of \Cref{main theorem}}\label{proof of main theorem}
Following Definition 6.2 in \cite{bc}, we define the fiberwise cotruncation $\operatorname{ft}_{\geq} E$ (both of degree $k$ and of degree $l$) of the fiber bundle $\pi \colon E \rightarrow B$ as the homotopy pushout of the diagram
$$
B \stackrel{\pi_{<}}{\longleftarrow} \operatorname{ft}_{<} E \stackrel{F_{<}}{\longrightarrow} E.
$$
In Definition 6.3 of \cite{bc} an auxiliary space $Q_{\geq}E$ together with a structure map $C_{\geq} \colon Q_{\geq}E \rightarrow E$ is introduced.
In Section 6 of \cite{bc} the following properties of $Q_{\geq}E$ are derived (the assumption that $B$ admits a good cover is needed to deduce item (a) and (c) because they are both based on Proposition 6.5 in \cite{bc}):

\begin{enumerate}[(a)]
\item By Lemma 6.6 in \cite{bc}, the map $C_{\geq} \colon E \rightarrow Q_{\geq}E$ induces for every $r \in \mathbb{Z}$ a surjective homomorphism $C_{\geq \ast} \colon H_{r}(E) \rightarrow H_{r}(Q_{\geq}E)$.

\item By the proof of Lemma 6.6 in \cite{bc}, there exists a diagram
\begin{center}
\begin{tikzpicture}

\path (0,0) node(a) {$E$}
         (4, 0) node(b) {$\operatorname{cone}(F_{<})$}
         (0, -2) node(c) {$Q_{\geq E}$}
         (4, -2) node(d) {$\operatorname{ft}_{\geq} E/B$};

\draw[->] (a) -- node[left] {$C_{\geq}$} (c);
\draw[->] (a) -- node[above] {$\operatorname{incl}$} (b);
\draw[->] (c) -- node[above] {$\simeq$} (d);
\draw[->] (b) -- node[right] {$\simeq$} (d);
\end{tikzpicture}
\end{center}
which commutes on the nose.
In particular, $Q_{\geq E}$ and $\operatorname{cone}(F_{<})$ are homotopy equivalent spaces.

\item According to Proposition 6.7 in \cite{bc}, the rational vector spaces $H_{r}(\operatorname{ft}_{<} E)$ and $\widetilde{H}_{n-r-1}(Q_{\geq}E)$ have for every $r \in \mathbb{Z}$ the same rank.
\end{enumerate}

Let us prove the equivalence $(i) \Leftrightarrow (ii)$ claimed by \Cref{main theorem}.
For this purpose, we first apply the equivalence $(i) \Leftrightarrow (iii)$ of \Cref{proposition poincare duality pairs} to the map $f \colon Y \rightarrow Z$ given by $F_{<} \colon \operatorname{ft}_{<} E \rightarrow E$.
Note that the space $Z = E$ is a Poincar\'{e} space with orientation class given by $[Z] = [E] \in H_{n-1}(Z)$ because $E$ is a closed oriented manifold.
Moreover, note that the map $F_{<} \colon \operatorname{ft}_{<} E \rightarrow E$ induces a bijection $F_{<} \colon H_{0}(\operatorname{ft}_{<} E) \rightarrow H_{0}(E)$.
Taking $X = \operatorname{cone}(F_{<})$ to be the mapping cone of the map $f = F_{<}$, we see the condition (1) of \Cref{proposition poincare duality pairs} holds by properties (a) and (b) of $Q_{\geq E}$, and condition (2) of \Cref{proposition poincare duality pairs} holds by properties (b) and (c) of $Q_{\geq E}$.
Let $x \in H_{n-1}(X)$ denote the image of the orientation class $[Z] \in H_{n-1}(Z)$ of $Z$ under the map $H_{n-1}(Z) \rightarrow H_{n-1}(X)$ induced by the inclusion $Z \subset X$.
Then, by using the equivalence $(i) \Leftrightarrow (iii)$ of \Cref{proposition poincare duality pairs}, we conclude that statement $(i)$ of \Cref{main theorem} is equivalent to the existence of a map $\phi \colon (S^{n-1}, s_{0}) \rightarrow (\operatorname{cone}(F_{<}), \operatorname{pt}) = (X, \operatorname{pt})$ such that the element $x \in H_{n-1}(X)$ is contained in the image of the connecting homomorphism $\partial_{n} \colon H_{n}(\operatorname{cone}(\phi), X) \rightarrow H_{n-1}(X)$.
Finally, \Cref{lemma hurewicz map} implies that the latter statement is equivalent to statement $(iii)$ of \Cref{main theorem}.
This completes the proof of the equivalence $(i) \Leftrightarrow (ii)$. \\

We continue to use the notation $f \colon Y \rightarrow Z$ for the map $F_{<} \colon \operatorname{ft}_{<} E \rightarrow E$, and write $X = \operatorname{cone}(F_{<})$ for the mapping cone of $f = F_{<}$.
From now on we suppose that either of the equivalent statements of \Cref{main theorem} holds.
Fix $r \in \{1, \dots, n-2\}$.
We have to show that the local duality obstruction $\mathcal{O}_{n-1-r}(\pi, k, l)$ vanishes.
By statement $(i)$ there is a map $\phi \colon S^{n-1} \rightarrow X$ and a lift $[e_{\phi}] \in H_{n}(X^{\phi}, Z)$ of the orientation class $[Z] \in H_{n-1}(Z)$ such that $(X^{\phi}, Z)$ is a rational Poincar\'{e} duality pair of dimension $n$ with orientation class $[e_{\phi}]$.
By Theorem I.2.2 in \cite[p. 8]{bro}, there is the following commutative diagram:
\begin{center}
\begin{tikzpicture}

\path (0,0) node(a) {$H^{r}(X^{\phi})$}
         (4, 0) node(b) {$H_{n-r}(X^{\phi}, Z)$}
         (0, -2) node(c) {$H^{r}\left(Z\right)$}
         (4, -2) node(d) {$H_{n-1-r}\left(Z\right)$.};

\draw[->] (a) -- node[left] {$\operatorname{incl}^{\ast}$} (c);
\draw[->] (a) -- node[above] {$-\cap [e_{\phi}]$} (b);
\draw[->] (c) -- node[above] {$- \cap [Z]$} (d);
\draw[->] (b) -- node[right] {$\partial_{n-r}$} (d);
\end{tikzpicture}
\end{center}

Factoring the inclusion $Z \subset X^{\phi}$ as $Z \subset X \subset X^{\phi}$, and using the inclusion of pairs $(X, Z) \subset (X^{\phi}, Z)$, we can extend the diagram to a commutative diagram as follows:

\begin{center}
\begin{tikzpicture}

\path (0,0) node(a) {$H^{r}(X^{\phi})$}
         (3, 0) node(b) {$H_{n-r}(X^{\phi}, Z)$}
         (0, -2) node(c) {$H^{r}\left(Z\right)$}
         (3, -2) node(d) {$H_{n-1-r}\left(Z\right)$.}
         (-3, 0) node(e) {$H^{r}(X)$}
         (6, 0) node(f) {$H_{n-r}(X, Z)$};

\draw[->] (a) -- node[left] {$\operatorname{incl}^{\ast}$} (c);
\draw[->] (a) -- node[above] {$-\cap [e_{\phi}]$} (b);
\draw[->] (c) -- node[above] {$- \cap [Z]$} (d);
\draw[->] (b) -- node[right] {$\partial_{n-r}$} (d);
\draw[->] (a) -- node[above] {$\operatorname{incl}^{\ast}$} (e);
\draw[->] (e) -- node[left] {$\operatorname{incl}^{\ast}$} (c);
\draw[->] (f) -- node[above] {$\operatorname{incl}^{\ast}$} (b);
\draw[->] (f) -- node[right] {$\partial_{n-r}$} (d);
\end{tikzpicture}
\end{center}
Since for $r \in \{1, \dots, n-2\}$ the inclusions $X \subset X^{\phi}$ and $(X, Z) \subset (X^{\phi}, Z)$ induce isomorphisms $H^{r}(X^{\phi}) \stackrel{\cong}{\longrightarrow} H^{r}(X)$ by \Cref{remark homology calculations} and $H_{n-r}(X, Z) \stackrel{\cong}{\longrightarrow} H_{n-r}(X^{\phi}, Z)$ by \Cref{remark isomorphisms induced by inclusions}, respectively, we obtain a commutative diagram of the form

\begin{center}
\begin{tikzpicture}

\path (0,0) node(a) {$H^{r}(X)$}
         (4, 0) node(b) {$H_{n-r}(X, Z)$}
         (0, -2) node(c) {$H^{r}\left(Z\right)$}
         (4, -2) node(d) {$H_{n-1-r}\left(Z\right)$.};

\draw[->] (a) -- node[left] {$\operatorname{incl}^{\ast}$} (c);
\draw[->] (a) -- node[above] {$D_{r}^{\phi} \quad \cong$} (b);
\draw[->] (c) -- node[above] {$- \cap [Z]$} (d);
\draw[->] (b) -- node[right] {$\partial_{n-r}$} (d);
\end{tikzpicture}
\end{center}
We claim that the previous diagram is part of a commutative diagram of the form

\begin{center}
\begin{tikzpicture}

\path (0,0) node(a) {$H^{r}(X)$}
         (3, 0) node(b) {$H_{n-r}(X, Z)$}
         (0, -2) node(c) {$H^{r}\left(Z\right)$}
         (3, -2) node(d) {$H_{n-1-r}\left(Z\right)$}
         (-3, 0) node(e) {$H^{r}(\operatorname{ft}_{\geq}E/B)$}
         (6, 0) node(f) {$H_{n-r}(\operatorname{cone}(Y), Y)$}
         (-3, -2) node(g) {$H^{r}(Q_{\geq}E)$}
         (6, -2) node(h) {$H_{n-1-r}(Y)$.}
         (4.5, -1) node(i) {$(II)$}
         (-1.5, -1) node(i) {$(I)$};

\draw[->] (a) -- node[right] {$\operatorname{incl}^{\ast}$} (c);
\draw[->] (a) -- node[above] {$D_{r}^{\phi} \; \cong$} (b);
\draw[->] (c) -- node[above] {$- \cap [Z]$} (d);
\draw[->] (b) -- node[left] {$\partial_{n-r}$} (d);
\draw[->] (h) -- node[above] {$f_{\ast}$} (d);
\draw[->] (f) -- node[right] {$\cong$} (h);
\draw[->] (f) -- node[above] {$\cong$} (b);
\draw[->] (e) -- node[above] {$\cong$} (a);
\draw[->] (e) -- node[left] {$\cong$} (g);
\draw[->] (g) -- node[above] {$C_{\geq}^{\ast}$} (c);
\end{tikzpicture}
\end{center}
In fact, the commutative square $(I)$ exists by property $(b)$ of $Q_{\geq} E$, and the commutative square $(II)$ is given by the square $(II)'$ in the following commutative diagram, which is induced by the canonical map of pairs $(\widetilde{f}, f) \colon (\operatorname{cone}(Y), Y) \rightarrow (X, Z)$:

\begin{center}
\begin{tikzpicture}

\path (0,0) node(a) {$H_{n-r}(\operatorname{cone}(Y), Y)$}
         (4.5, 0) node(b) {$\widetilde{H}_{n-r}(\operatorname{cone}(Y)/Y)$}
         (0, -2) node(c) {$H_{n-r}\left(X, Z\right)$}
         (4.5, -2) node(d) {$\widetilde{H}_{n-r}\left(X/Z\right)$.}
         (-5, 0) node(e) {$H_{n-r-1}(Y)$}
         (-2.5, -1) node(l) {$(II)'$}
         (-5, -2) node(g) {$H_{n-r-1}(Z)$};

\draw[->] (a) -- node[right] {$\widetilde{f}_{\ast}$} (c);
\draw[->] (a) -- node[above] {$\operatorname{quot}_{\ast}$} (b);
\draw[->] (c) -- node[above] {$\operatorname{quot}_{\ast}$} (d);
\draw[->] (b) -- node[left] {$\cong$} (d);
\draw[->] (a) -- node[above] {$\partial_{n-r}$} (e);
\draw[->] (e) -- node[left] {$f_{\ast}$} (g);
\draw[->] (c) -- node[above] {$\partial_{n-r}$} (g);
\end{tikzpicture}
\end{center}
(Note that $\widetilde{f}_{\ast}$ is an isomorphism because the horizontal arrows induced by the quotient maps $(\operatorname{cone}(Y), Y) \rightarrow (\operatorname{cone}(Y)/Y, \operatorname{pt})$ and $(X, Z) \rightarrow (X/Z, \operatorname{pt})$ are isomorphisms (see Proposition 2.22 in \cite[p. 124]{hat}, and note that $(\operatorname{cone}(Y), Y)$ and $(X, Z)$ are good pairs).
Moreover, the connecting homomorphism
$$
\partial_{n-r} \colon H_{n-r}(\operatorname{cone}(Y), Y) \rightarrow H_{n-r-1}(Y)
$$
is an isomorphism because $\operatorname{cone}(Y)$ is contractible, and $r \neq n-1$.)

All in all, we obtain a commutative diagram
\begin{center}
\begin{tikzpicture}

\path (0,0) node(a) {$H^{r}(Q_{\geq}E)$}
         (4, 0) node(b) {$H_{n-r-1}(Y)$}
         (0, -2) node(c) {$H^{r}\left(Z\right)$}
         (4, -2) node(d) {$H_{n-1-r}\left(Z\right)$.};

\draw[->] (a) -- node[left] {$C_{\geq}^{\ast}$} (c);
\draw[->] (a) -- node[above] {$\cong$} (b);
\draw[->] (c) -- node[above] {$- \cap [Z]$} (d);
\draw[->] (b) -- node[right] {$f_{\ast}$} (d);
\end{tikzpicture}
\end{center}
After applying the functor $\operatorname{Hom}_{\mathbb{Q}}(-, \mathbb{Q})$, we conclude from Proposition 6.9 in \cite{bc} that $\mathcal{O}_{n-1-r}(\pi, k, l)$ vanishes.
(Moreover, by the same proposition, the isomorphism $H^{r}(Q_{\geq}E) \rightarrow H_{n-r-1}(Y)$ is uniquely determined by commutativity of the above diagram.
Consequently, the isomorphism $D_{r}^{\phi}$ does actually not depend on $\phi$, and is thus the desired isomorphism $D_{r}^{\phi} = D_{r}$ used in \Cref{remark independence of attaching map}.)

This completes the proof of \Cref{main theorem}.

\subsection{Connectivity of truncation cones}\label{Connectivity of truncation cones}
In order to be able to apply rational homotopy theory to truncation cones in \Cref{rational homotopy theory}, we study the connectivity properties of truncation cones in the present section.

The following result shows that the reduced homology of truncation cones vanishes in low degrees.

\begin{lemma}\label{lemma homology of truncation cone}
If $B$ admits a good cover (e.g., if $B$ is smooth or at least PL), then the homology of $\operatorname{cone}(F_{<})$ satisfies $\widetilde{H}_{i}(\operatorname{cone}(F_{<})) = 0$ for $i < \operatorname{max}\{k, l\}$.
\end{lemma}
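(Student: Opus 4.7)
The plan is to convert the claim into a statement about the total-space map $F_<$ via the long exact sequence of a cofiber sequence, and then to establish that statement by reduction to fibers combined with a good-cover Mayer--Vietoris induction. Explicitly, the Puppe cofiber sequence $\operatorname{ft}_< E \xrightarrow{F_<} E \xrightarrow{\iota} \operatorname{cone}(F_<)$ gives rise to a long exact sequence in reduced rational homology, and a chase shows that $\widetilde H_i(\operatorname{cone}(F_<)) = 0$ for all $i < \max\{k,l\}$ provided that $F_{<\ast}\colon H_j(\operatorname{ft}_< E) \to H_j(E)$ is an isomorphism for every $j < \max\{k,l\}$. (Surjectivity in the top degree $j = \max\{k,l\}-1$ kills the image of $\iota_\ast$ in that degree, and injectivity in degrees below kills the connecting map $\partial$.)

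Next I would verify the corresponding fiberwise statement: on each fiber, $F_<$ restricts to $f_<\colon L_< \to L$, and I claim that $f_{<\ast}\colon H_j(L_<) \to H_j(L)$ is an isomorphism for every $j < \max\{k,l\}$. Indeed, for $j < \min\{k,l\}$ this is the Moore approximation property (with respect to either of the two degrees). For $\min\{k,l\} \le j < \max\{k,l\}$, both sides vanish: $H_j(L_<)=0$ since $f_<$ is a Moore approximation of degree $\min\{k,l\}$, and $H_j(L)=0$ by the standing assumption on the link recorded in the paragraph preceding this section.

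To globalize, I would invoke the good open cover $\{U_\alpha\}$ of $B$. Over any contractible open $U \subset B$ (in particular over any finite intersection $U_{\alpha_1} \cap \cdots \cap U_{\alpha_p}$), the bundles $\pi$ and $\pi_<$ trivialize to $U \times L$ and $U \times L_<$, and under these trivializations $F_<|_U$ becomes $\operatorname{id}_U \times f_<$. Applying the Künneth formula over $\mathbb{Q}$ together with the fiberwise isomorphism proved above, we conclude $F_{<\ast}$ is an isomorphism on $H_j$ for $j < \max\{k,l\}$ over each contractible member of the cover. A standard Mayer--Vietoris induction on the number of sets in the cover, using the naturality of the Mayer--Vietoris sequence with respect to the pullback covers $\{\pi_<^{-1}(U_\alpha)\}$ and $\{\pi^{-1}(U_\alpha)\}$ of $\operatorname{ft}_< E$ and $E$ together with the five lemma applied in the range $j \le \max\{k,l\}-1$, then propagates the isomorphism to all finite unions, and a direct-limit argument handles the general case (in our application $B$ is compact, so finitely many steps suffice).

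The main technical obstacle is precisely the Mayer--Vietoris induction step, where one must verify the naturality of the Mayer--Vietoris sequence under the bundle map $F_<$ and maintain compatibility of the trivializations chosen over the various opens; this is the same mechanism that drives Proposition 6.5 of \cite{bc}, which can be cited directly for the statement that $F_{<\ast}\colon H_j(\operatorname{ft}_< E) \to H_j(E)$ is an isomorphism for $j < \max\{k,l\}$, shortening the proof to the cofiber-sequence reduction explained in the first paragraph.
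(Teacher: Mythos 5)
Your argument is correct, and it runs on the same engine as the paper's proof: local triviality over a good cover of $B$, the fiberwise input that $f_{<}$ is a Moore approximation of both degrees $k$ and $l$ combined with the standing assumption $H_{i}(L)=0$ for $\operatorname{min}\{k,l\} \leq i < \operatorname{max}\{k,l\}$ (so that $H_{i}(L_{<}) \rightarrow H_{i}(L)$ is an isomorphism below $\operatorname{max}\{k,l\}$, equivalently $\widetilde{H}_{i}(\operatorname{cone}(f_{<}))=0$ there), and a Mayer--Vietoris local-to-global step. The packaging differs, though. You reduce via the cofiber long exact sequence to the absolute statement that $F_{<\ast} \colon H_{j}(\operatorname{ft}_{<}E) \rightarrow H_{j}(E)$ is an isomorphism for $j < \operatorname{max}\{k,l\}$, and prove that by a hands-on K\"{u}nneth-plus-Mayer--Vietoris induction over a finite good cover, using compactness of $B$ (legitimate here, since $B$ is a closed manifold). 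The paper instead works directly with the relative precosheaf $U \mapsto H_{i}(\rho^{-1}(U), \pi_{<}^{-1}(U))$ attached to the cylinder bundle $\rho \colon \operatorname{cyl}(F_{<}) \rightarrow B$, whose local vanishing is precisely $\widetilde{H}_{i}(\operatorname{cone}(f_{<}))=0$ for $i < \operatorname{max}\{k,l\}$, and globalizes by adapting the precosheaf principle of Proposition 4.4 in \cite{bc} to finite sequences (\Cref{proposition local to global via precosheaves}); this bypasses the absolute comparison altogether and needs no finiteness or compactness of the cover, while your route has to handle the usual bookkeeping over the lattice of finite intersections (exactly what the category $\tau\mathcal{U}$ formalizes) plus a limit argument in the non-compact case. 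Two small cautions on your proposed shortcut: results in \cite{bc} around Proposition 6.5 are stated for a single truncation degree, so to quote them at degree $\operatorname{max}\{k,l\}$ you should say explicitly that the standing vanishing assumption on $H_{\ast}(L)$ makes $f_{<}$ an equivariant Moore approximation of degree $\operatorname{max}\{k,l\}$ as well; and you should verify that the cited proposition really asserts the isomorphism in the stated range before leaning on it --- though your self-contained induction already makes the citation unnecessary.
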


\begin{proof}
We employ the local to global technique based on precosheaves as presented in Section 4 of \cite{bc}, and assume familiarity with the notation and definitions used therein.

Our argument requires a slight modification of Proposition 4.4 of \cite{bc} that applies to \emph{finite} sequences of $\delta$-compatible morphisms between precosheaves instead of inifinite sequences (see \Cref{proposition local to global via precosheaves} below).
For this purpose, consider an open cover $\mathcal{U}$ of the topological space $B$.
Let $\tau \mathcal{U}$ denote the category whose objects are unions of finite intersections of open sets in $\mathcal{U}$, and whose morphisms are inclusions.
On the product category $\tau \mathcal{U} \times \tau \mathcal{U}$, consider the functors $\cap, \cup \colon \tau \mathcal{U} \times \tau \mathcal{U} \rightarrow \tau \mathcal{U}$ induced by intersection and union of open sets, respectively.
For $i = 1, 2$, projection to the $i$-th factor determines a projection functor $p_{i} \colon \tau \mathcal{U} \times \tau \mathcal{U} \rightarrow \tau \mathcal{U}$.
We also need for $i = 1, 2$ the natural transformations $j_{i} \colon p_{i} \rightarrow \cup$ and $\iota_{i} \colon \cap \rightarrow p_{i}$ induced by the inclusions $U, V \subset U \cup V$ and $U \cap V \subset U, V$, respectively.
Now consider a finite sequence $\mathcal{F}_{i} = \mathcal{F}_{0}, \dots, \mathcal{F}_{N}$ of precosheaves on $B$.
Slightly modifying Definition 4.3 in \cite{bc}, we say that the finite sequence $\mathcal{F}_{i}$ satisfies the \emph{$\mathcal{U}$-Mayer-Vietoris property} if there are natural transformations of functors on $\tau \mathcal{U} \times \tau \mathcal{U}$,
$$
\delta_{i}^{\mathcal{F}} \colon \mathcal{F}_{i} \circ \cup \rightarrow \mathcal{F}_{i-1} \circ \cap, \qquad 1 \leq i \leq N,
$$
such that for every pair of open sets $U, V \in \tau \mathcal{U}$ the following sequence is exact:
\begin{align*}
&\mathcal{F}_{N}(U \cap V) \stackrel{(\iota_{1}^{N}, \iota_{2}^{N})}{\longrightarrow} \mathcal{F}_{N}(U) \oplus \mathcal{F}_{N}(V) \stackrel{j_{1}^{N}-j_{2}^{N}}{\longrightarrow} \mathcal{F}_{N}(U \cup V)
\stackrel{\delta_{N}^{\mathcal{F}}}{\longrightarrow} \dots \\
\dots \stackrel{\delta_{i+1}^{\mathcal{F}}}{\longrightarrow} &\mathcal{F}_{i}(U \cap V) \stackrel{(\iota_{1}^{i}, \iota_{2}^{i})}{\longrightarrow} \mathcal{F}_{i}(U) \oplus \mathcal{F}_{i}(V) \stackrel{j_{1}^{i}-j_{2}^{i}}{\longrightarrow} \mathcal{F}_{i}(U \cup V) \stackrel{\delta_{i}^{\mathcal{F}}}{\longrightarrow} \dots \\
\dots \stackrel{\delta_{1}^{\mathcal{F}}}{\longrightarrow} &\mathcal{F}_{0}(U \cap V) \stackrel{(\iota_{1}^{0}, \iota_{2}^{0})}{\longrightarrow} \mathcal{F}_{0}(U) \oplus \mathcal{F}_{0}(V) \stackrel{j_{1}^{0}-j_{2}^{0}}{\longrightarrow} \mathcal{F}_{0}(U \cup V) \longrightarrow 0.
\end{align*}
(Note that the only difference to Definition 4.3 in \cite{bc} is that our sequence ends to the left with the term $\mathcal{F}_{N}(U \cap V)$ because $\mathcal{F}_{N+1}(U \cup V)$, $\delta_{N+1}^{\mathcal{F}}$, etc. are not defined.)
In accordance with Definition 4.3 in \cite{bc}, a collection of morphisms of precosheaves $f_{i} \colon \mathcal{F}_{i} \rightarrow \mathcal{G}_{i}$, $0 \leq i \leq N$, is called \emph{$\delta$-compatible} if for every pair of open sets $U, V \in \tau \mathcal{U}$ the following diagram commutes for all $i$ (where $0 \leq i \leq N-1$ in our setting):
\begin{center}
\begin{tikzpicture}

\path (0,0) node(a) {$\mathcal{F}_{i+1}(U \cup V)$}
         (4, 0) node(b) {$\mathcal{F}_{i}(U \cap V)$}
         (0, -2) node(c) {$\mathcal{G}_{i+1}(U \cup V)$}
         (4, -2) node(d) {$\mathcal{G}_{i}(U \cap V)$.};

\draw[->] (a) -- node[left] {$f_{i+1}(U \cup V)$} (c);
\draw[->] (a) -- node[above] {$\delta_{i+1}^{\mathcal{F}}(U, V)$} (b);
\draw[->] (b) -- node[right] {$f_{i}(U \cap V)$} (d);
\draw[->] (c) -- node[above] {$\delta_{i+1}^{\mathcal{G}}(U, V)$} (d);
\end{tikzpicture}
\end{center}
Next we state our adaption of Proposition 4.4 in \cite{bc} to finite sequences of $\delta$-compatible morphisms between precosheaves.
(The only change in the proof is that the five lemma is applied to a commutative ladder that ends on the left with the homomorphism $f_{N}(V^{j}) \colon \mathcal{F}_{N}(V^{j}) \rightarrow \mathcal{G}_{N}(V^{j})$.)
\begin{proposition}\label{proposition local to global via precosheaves}
Let $\mathcal{U}$ be an open cover of the topological space $B$.
Let $f_{i} \colon \mathcal{F}_{i} \rightarrow \mathcal{G}_{i}$, $0 \leq i \leq N$, be a finite sequence of $\delta$-compatible morphisms between $\mathcal{U}$-locally constant precosheaves on $B$ that satisfy the $\mathcal{U}$-Mayer-Vietoris property.
If the map $f_{i}(U) \colon \mathcal{F}_{i}(U) \rightarrow \mathcal{G}_{i}(U)$ is an isomorphism for every $U \in \mathcal{U}$ and for every $0 \leq i \leq N$, then $f_{i}(B) \colon \mathcal{F}_{i}(B) \rightarrow \mathcal{G}_{i}(B)$ is an isomorphism for all $0 \leq i \leq N$.
\end{proposition}

Recall that we have a fiber bundle $\pi_{<} : \operatorname{ft}_{<}E \rightarrow B$ with fiver $L_{<}$.
Consider also the fiber bundle $\rho \colon \operatorname{cyl}(F_{<}) \rightarrow B$ with fiber $\operatorname{cyl}(f_{<})$, and note that there are morphisms of fiber bundles $\operatorname{ft}_{<}E \rightarrow \operatorname{cyl}(F_{<}) \rightarrow E$ restricting to fiberwise maps $L_{<} \rightarrow \operatorname{cyl}(f_{<}) \rightarrow L$.

Fix a good cover $\mathcal{U}$ of $B$, and set $N = \operatorname{max}\{k, l\}-1$.
We apply \Cref{proposition local to global via precosheaves} to the precosheaves $\mathcal{F}_{i}$ and $\mathcal{G}_{i}$, $0 \leq i \leq N$, given on open sets $U \in \tau \mathcal{U}$ by $\mathcal{F}_{i}(U) = H_{i}(\rho^{-1}(U), \pi_{<}^{-1}(U))$ and $\mathcal{G}_{i}(U) = 0$, and the morphisms of precosheaves $f_{i} \colon \mathcal{F}_{i} \rightarrow \mathcal{G}_{i}$ determined by the unique map $f_{i}(U) \colon H_{i}(\rho^{-1}(U), \pi_{<}^{-1}(U)) \rightarrow 0$ for open sets $U \in \tau \mathcal{U}$.
Note that, by the Eilenberg-Steenrod axioms, the precosheaves $\mathcal{F}_{i}$ are $\mathcal{U}$-locally constant because $\mathcal{U}$ is a good open cover of $B$, and satisfy the $\mathcal{U}$-Mayer-Vietoris property with respect to the connecting homomorphisms of the relative form of the Mayer-Vietoris sequence. 
It is clear that the trivial precosheaves $\mathcal{G}_{i}$ are $\mathcal{U}$-locally constant and satisfy the $\mathcal{U}$-Mayer-Vietoris property with respect to the natural transformations $\delta_{i}^{\mathcal{G}} = 0$.
Obviously, the morphisms of precosheaves $f_{i} \colon \mathcal{F}_{i} \rightarrow \mathcal{G}_{i}$ are $\delta$-compatible.
Moreover, as every open set $U \in \mathcal{U}$ is contractible, we conclude for every $0 \leq i \leq N$ ($< \operatorname{max}\{k, l\}$) that $f_{i}(U) \colon \mathcal{F}_{i}(U) \rightarrow \mathcal{G}_{i}(U) = 0$ is an isomorphism because
$$
\mathcal{F}_{i}(U) = H_{i}(\rho^{-1}(U), \pi_{<}^{-1}(U)) \cong H_{i}(\operatorname{cyl}(f_{<}), L_{<}) \cong \widetilde{H}_{i}(\operatorname{cone}(f_{<})) = 0.
$$
Hence, \Cref{proposition local to global via precosheaves} implies that $\mathcal{F}_{i}(B) = 0$ for all $0 \leq i \leq N$, where note that
$$
\mathcal{F}_{i}(B) = H_{i}(\operatorname{cyl}(F_{<}), \operatorname{ft}_{<}E) \cong \widetilde{H}_{i}(\operatorname{cone}(F_{<})).
$$
\end{proof}

The following result provides a proof of simple connectivity of truncation cones, at least under a mild hypothesis on the underlying Moore approximation.
Simple connectivity of the truncation cone will enable us to invoke the machinery of rational homotopy theory in \Cref{rational homotopy theory}.

\begin{lemma}\label{lemma simply connected cone}
The truncation cone, $\operatorname{cone}(F_{<})$, is path connected.
If the equivariant Moore approximation $f_{<} \colon L_{<} \rightarrow L$ induces a surjection $f_{< \ast} \colon \pi_{1}(L_{<}, x_{0}) \rightarrow \pi_{1}(L, f_{<}(x_{0}))$ for every basepoint $x_{0} \in L_{<}$, then $\operatorname{cone}(F_{<})$ is simply connected.
\end{lemma}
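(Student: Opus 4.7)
The plan is to handle the two assertions separately. For path-connectedness, I argue directly from the mapping cone construction: every point of $\operatorname{cone}(F_<)$ either lies in the cylinder portion $\operatorname{ft}_<E \times [0,1]$ (hence is joined to the cone point $\ast$ along a cone ray) or lies in $E$. Since $E$ is path-connected and $F_<(\operatorname{ft}_<E) \subseteq E$ is nonempty, every point of $E$ can first be joined by a path in $E$ to a point of $F_<(\operatorname{ft}_<E)$, and then to $\ast$ through a cone ray. This settles the first assertion.

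For simple connectivity, I apply van Kampen's theorem to the open cover of $\operatorname{cone}(F_<)$ by $U := \{(y,t) : t < 2/3\} \cup \{\ast\}$, a contractible neighborhood of the cone point, and $V := \{(y,t) : t > 1/3\} \cup E$, which deformation retracts onto $E$; their intersection deformation retracts onto $\operatorname{ft}_<E$. This yields the identification
\[
\pi_1(\operatorname{cone}(F_<), \ast) \cong \pi_1(E, x_0) \big/ \bigl\langle F_{< \ast}(\pi_1(\operatorname{ft}_<E, y_0)) \bigr\rangle^{N},
\]
so it suffices to show that $F_{< \ast} \colon \pi_1(\operatorname{ft}_<E, y_0) \to \pi_1(E, x_0)$ is surjective. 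I establish this by comparing the long exact sequences of homotopy groups of the fiber bundles $\pi$ and $\pi_<$, which fit into a commutative ladder linked by $f_{< \ast}$, $F_{< \ast}$, and $\operatorname{id}_{\pi_1(B)}$. Given $[\alpha] \in \pi_1(E, x_0)$, I first lift $\pi_\ast[\alpha] \in \pi_1(B, b_0)$ to some $[\tilde\alpha] \in \pi_1(\operatorname{ft}_<E, y_0)$ via surjectivity of $\pi_{< \ast}$ (a consequence of path-connectedness of $L_<$). Then $[\alpha] \cdot F_{< \ast}[\tilde\alpha]^{-1}$ lies in $\ker \pi_\ast = \iota_\ast(\pi_1(L,x_0))$, so it equals $\iota_\ast[\sigma]$ for some $[\sigma] \in \pi_1(L, x_0)$. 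By the surjectivity hypothesis, $[\sigma] = f_{< \ast}[\sigma']$ for some $[\sigma'] \in \pi_1(L_<, y_0)$, and commutativity of the ladder gives $[\alpha] = F_{< \ast}(\iota_{< \ast}[\sigma'] \cdot [\tilde\alpha])$, placing $[\alpha]$ in the image of $F_{< \ast}$.

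The main subtlety lies in the connectivity of the fiber $L_<$. If $L_<$ is disconnected and the monodromy of $\pi_<$ permutes components, then $\pi_{< \ast}$ need not be surjective onto $\pi_1(B)$, and the argument above has to be localized to the component of $y_0$ in $L_<$ that is visible to a given loop in $E$. This is precisely where the hypothesis that $f_{< \ast}$ is surjective \emph{for every basepoint} $x_0 \in L_<$ becomes essential: it guarantees that the step producing $[\sigma']$ goes through uniformly regardless of the component chosen. Under the natural simplifying assumption that $L_<$ is itself path-connected (as arises in the Moore approximations considered in the paper), the argument proceeds exactly as described and yields $\pi_1(\operatorname{cone}(F_<), \ast) = 1$.
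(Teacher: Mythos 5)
Your argument is correct and genuinely different from the paper's \emph{in the case where all spaces involved are connected}: there you compute $\pi_{1}(\operatorname{cone}(F_{<}))$ by van Kampen as $\pi_{1}(E)$ modulo the normal closure of $\operatorname{im}F_{<\ast}$, and prove surjectivity of $F_{<\ast}$ on $\pi_{1}$ by a diagram chase in the ladder of homotopy long exact sequences of the fibrations $\pi$ and $\pi_{<}$. The paper instead never invokes the bundle exact sequences: it reduces simple connectivity to the vanishing of $\pi_{1}(\operatorname{cyl}(F_{<}),\operatorname{ft}_{<}E)$, regards a path in $\operatorname{cyl}(F_{<})$ with endpoints in $\operatorname{ft}_{<}E$ as a section of $\operatorname{cyl}(F_{<})\rightarrow B$ over a path in $B$, pulls the bundle back to a trivial bundle over $[0,1]$, and thereby reduces everything to $\pi_{1}(\operatorname{cyl}(f_{<}),L_{<},x_{0})=0$ for every basepoint $x_{0}\in L_{<}$. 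Your route is arguably shorter in the connected case; the paper's route is designed to avoid exactly the connectivity hypotheses you need.

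That is where the genuine gap lies: the lemma is stated under the standing assumptions of the section, which do \emph{not} require $E$, $L$ or $L_{<}$ (or even $B$) to be connected — indeed the clause ``for every basepoint $x_{0}\in L_{<}$'' only has content when $L_{<}$ is disconnected, and most of the paper's proof is devoted to this case. Your path-connectedness argument already invokes path-connectedness of $E$, which is not given (the fiberwise argument, using that $H_{0}(f_{<})$ is onto so every component of every fiber meets $\operatorname{im}F_{<}$, avoids this). More seriously, when $\operatorname{ft}_{<}E$ is disconnected your van Kampen presentation is false as stated: a disconnected intersection $U\cap V\simeq\operatorname{ft}_{<}E$ contributes additional free generators (compare $\Sigma S^{0}\cong S^{1}$, where both pieces are simply connected), and surjectivity of $\pi_{<\ast}\colon\pi_{1}(\operatorname{ft}_{<}E)\rightarrow\pi_{1}(B)$ — which your lifting step uses — can fail outright when the monodromy permutes components of $L_{<}$. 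So ``localizing to the component of $y_{0}$'' is not a routine patch within your framework: one must first show that $F_{<}$ induces a \emph{bijection} on $\pi_{0}$ (this is where injectivity of $H_{0}(f_{<})$ enters, via pulling the bundle back over paths in $B$), deduce $\operatorname{cone}(F_{<})=\bigvee_{i}\operatorname{cone}(F_{<}^{(i)})$ over the components $E^{(i)}$ of $E$, and then run an argument on each wedge summand that does not presuppose connectedness of the fibers — which is precisely the structure of the paper's proof. As written, your proposal proves the lemma only under the additional hypothesis that $E$, $L$, $L_{<}$ are path connected, and defers the general case rather than closing it.
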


\begin{proof}
Since $f_{<} \colon L_{<} \rightarrow L$ is a Moore approximation of positive degree, the induced map $H_{0}(L_{<}) \rightarrow H_{0}(L)$ is an isomorphism.
Hence, $\operatorname{cone}(f_{<})$, the mapping cone of $f_{<}$, is path connected.
As the bundle morphism $F_{<} \colon \operatorname{ft}_{<} E \rightarrow E$ restricts on fibers to copies of the map $f_{<}$, it follows that $\operatorname{cone}(F_{<})$ is path connected.

It remains to show that $\pi_{1}(\operatorname{cone}(F_{<}), \operatorname{pt})$ is trivial.
For this purpose, we note that the bundle morphism $F_{<} \colon \operatorname{ft}_{<} E \rightarrow E$ can be written as the disjoint union $F_{<} = \bigsqcup_{i} F_{<}^{(i)}$ of bundle morphisms
$$
F_{<}^{(i)} \colon \operatorname{ft}_{<} E^{(i)} \rightarrow E^{(i)},
$$
where $\operatorname{ft}_{<} E = \bigsqcup_{i} \operatorname{ft}_{<} E^{(i)}$ and $E = \bigsqcup_{i} E^{(i)}$ are decompositions into the path components, and we note that the bundles $\pi_{<} \colon \operatorname{ft}_{<} E \rightarrow B$ and $\pi \colon E \rightarrow B$ restrict to bundles $\pi_{<}^{(i)} \colon \operatorname{ft}_{<} E^{(i)} \rightarrow B$ and $\pi^{(i)} \colon E^{(i)} \rightarrow B$.
(In fact, from every point $x \in E$ there exists a path in $E$ to a point of the form $F_{<}(y)$ for some $y \in \operatorname{ft}_{<} E$ because $F_{<}$ restricts over the basepoint $\pi(x) \in B$ to a copy of the map $f_{<} \colon L_{<} \rightarrow L$, and we can use surjectivity of the induced map $H_{0}(L_{<}) \rightarrow H_{0}(L)$.
Conversely, given two points $y_{0}, y_{1} \in \operatorname{ft}_{<} E$ whose images under $F_{<}$ can be connected by a path in $E$, we construct a path between $y_{0}$ and $y_{0}$ in $\operatorname{ft}_{<} E$ as follows.
We choose a path $\gamma_{E} \colon [0, 1] \rightarrow E$ from $\gamma_{E}(0) = F_{<}(y_{0})$ to $\gamma_{E}(1) = F_{<}(y_{1})$.
Then, $\gamma_{E}$ is a section of $\pi \colon E \rightarrow B$ along the path $\gamma = \pi \circ \gamma_{E} \colon [0, 1] \rightarrow B$. In particular we have $F_{<}(y_{0}) \in \pi^{-1}(\gamma(0))$ and $F_{<}(y_{1}) \in \pi^{-1}(\gamma(1))$.
The pullback of the bundle morphism $F_{<} \colon \operatorname{ft}_{<} E \rightarrow E$ under $\gamma$ is isomorphic to the bundle morphism $f_{<} \times \operatorname{id}_{[0, 1]} \colon L_{<} \times [0, 1] \rightarrow L \times [0, 1]$ of trivial bundles over the interval $[0, 1]$.
Under this pullback, the points $y_{0} \in \pi_{<}^{-1}(\gamma(0))$ and $y_{1} \in \pi_{<}^{-1}(\gamma(1))$ correspond to points of the form $(y_{0}', 0) \in L_{<} \times \{0\}$ and $(y_{1}', 1) \in L_{<} \times \{1\}$, respectively.
Furthermore, the section $\gamma_{E} \colon [0, 1] \rightarrow E$ of $\pi \colon E \rightarrow B$ pulls back to a section $\gamma_{E}' \colon [0, 1] \rightarrow L \times [0, 1]$ of the projection $L \times [0, 1] \rightarrow [0, 1]$ such that $\gamma_{E}'(0) = (f_{<}(y_{0}'), 0) \in L \times \{0\}$ and $\gamma_{E}'(1) = (f_{<}(y_{1}'), 1) \in L \times \{1\}$.
Using injectivity of the map $H_{0}(L_{<}) \rightarrow H_{0}(L)$ induced by $f_{<}$, we conclude that there is a section $\gamma_{<}' \colon [0, 1] \rightarrow L_{<} \times [0, 1]$ of the projection $L_{<} \times [0, 1] \rightarrow [0, 1]$ such that $\gamma_{<}'(0) = (y_{0}', 0)$ and $\gamma_{<}'(1) = (y_{1}', 1)$.
All in all, the composition of $\gamma_{<}' \colon [0, 1] \rightarrow L_{<} \times [0, 1]$ with the structure map $L_{<} \times [0, 1] \rightarrow \operatorname{ft}_{<} E$ is the desired path in $\operatorname{ft}_{<} E$.)
Since $\operatorname{cone}(F_{<}) = \bigvee_{i} \operatorname{cone}(F_{<}^{(i)})$, we conclude from the Seifert-van Kampen theorem for a bouquet of spaces (see Example 1.21 in \cite[p. 43]{hat}) that in order to show that $\pi_{1}(\operatorname{cone}(F_{<}), \operatorname{pt})$ is trivial, it suffices to show that $\pi_{1}(\operatorname{cone}(F_{<}^{(i)}), \operatorname{pt})$ is trivial for every $i$.

Fix an index $i$.
The Seifert-van Kampen theorem implies that the fundamental group $\pi_{1}(\operatorname{cone}(F_{<}^{(i)}), \operatorname{pt})$ is trivial when the canonical inclusion $\operatorname{ft}_{<}E^{(i)} \subset \operatorname{cyl}(F_{<}^{(i)})$ induces a surjection on fundamental groups.
In view of the exactness of the sequence
$$
\pi_{1}(\operatorname{ft}_{<}E^{(i)}) \rightarrow \pi_{1}(\operatorname{cyl}(F_{<}^{(i)})) \rightarrow \pi_{1}(\operatorname{cyl}(F_{<}^{(i)}), \operatorname{ft}_{<}E^{(i)}),
$$
it suffices to show that $\pi_{1}(\operatorname{cyl}(F_{<}^{(i)}), \operatorname{ft}_{<}E^{(i)})$ is trivial.
By the discussion in \cite[p. 346]{hat}, this means we have to show that every path $[0, 1] \rightarrow \operatorname{cyl}(F_{<}^{(i)})$ with endpoints in $\operatorname{ft}_{<}E^{(i)}$ is homotopic rel endpoints to a path $[0, 1] \rightarrow \operatorname{ft}_{<}E^{(i)}$.

Let us show that every path $\alpha \colon [0, 1] \rightarrow \operatorname{cyl}(F_{<})$ with endpoints in $\operatorname{ft}_{<}E \subset \operatorname{cyl}(F_{<})$ is homotopic rel endpoints to a path $[0, 1] \rightarrow \operatorname{ft}_{<}E$.
Note that we have a fiber bundle $\rho \colon \operatorname{cyl}(F_{<}) \rightarrow B$ with fiber $\operatorname{cyl}(f_{<})$, where the canonical inclusion $\operatorname{ft}_{<}E \subset \operatorname{cyl}(F_{<})$ can fiberwisely be identified with the canonical inclusion $j \colon L_{<} \rightarrow \operatorname{cyl}(f_{<})$.
Then, we may consider $\alpha$ as a section of $\rho \colon \operatorname{cyl}(F_{<}) \rightarrow B$ along the path $\beta = \rho \circ \alpha \colon [0, 1] \rightarrow B$.
The pullback of the bundle $\rho \colon \operatorname{cyl}(F_{<}) \rightarrow B$ under $\beta$ can be identified with the trivial bundle $\rho' = \operatorname{pr}_{[0, 1]} \colon \operatorname{cyl}(f_{<}) \times [0, 1] \rightarrow [0, 1]$ in such a way that the section $\alpha$ of $\rho$ pulls back to a section $\alpha' \colon [0, 1] \rightarrow \operatorname{cyl}(f_{<}) \times [0, 1]$ of $\rho'$, where $\alpha'(0) \in L_{<} \times \{0\}$ and $\alpha'(1) \in L_{<} \times \{1\}$.
It remains to show that the unique map $\alpha'' \colon [0, 1] \rightarrow \operatorname{cyl}(f_{<})$ satisfying $\alpha'(t) = (\alpha''(t), t)$ for all $t \in [0, 1]$ is homotopic rel endpoints to a map $[0, 1] \rightarrow L_{<}$.
In other words, by the discussion in \cite[p. 346]{hat}, we have to show that $\pi_{1}(\operatorname{cyl}(f_{<}), L_{<}, x_{0}) = 0$ for every basepoint $x_{0} \in L_{<}$.
In order to show this, we consider the following portion of the long exact sequence of homotopy groups of the pointed pair $(\operatorname{cyl}(f_{<}), L_{<}, x_{0})$ (omitting the basepoint):
$$
\pi_{1}(L_{<}) \stackrel{j_{\ast}}{\longrightarrow} \pi_{1}(\operatorname{cyl}(f_{<})) \rightarrow \pi_{1}(\operatorname{cyl}(f_{<}), L_{<}) \rightarrow \pi_{0}(L_{<}) \stackrel{j_{\ast}}{\longrightarrow} \pi_{0}(\operatorname{cyl}(f_{<})).
$$
Note that $j_{\ast}$ is an isomorphism in degree $0$ because $f_{<}$ induces an isomorphism $\pi_{0}(L_{<}, x) \rightarrow \pi_{0}(L, x)$.
Thus, the claim follows from the assumption that $f_{<}$ induces a surjection on fundamental groups, i.e., that $j_{\ast}$ is a surjection in degree $1$.
\end{proof}

\begin{example}\label{remark abelian fundamental group}
If $L$ is path connected and has abelian fundamental group, then any equivariant Moore approximation $f_{<} \colon L_{<} \rightarrow L$ to $L$ of degree $\geq 2$ induces automatically a surjection on fundamental groups.
In fact, by naturality of the Hurewicz homomorphism we have for any basepoint $x_{0} \in L_{<}$ the commutative diagram
\begin{center}
\begin{tikzpicture}

\path (0,0) node(a) {$\pi_{1}\left(L_{<}, x_{0}\right)$}
         (4, 0) node(b) {$\pi_{1}\left(L, f_{<}(x_{0})\right)$}
         (0, -2) node(c) {$H_{1}\left(L_{<}\right)$}
         (4, -2) node(d) {$H_{1}\left(L\right)$.};

\draw[->] (a) -- node[left] {$\operatorname{Hur}$} (c);
\draw[->] (a) -- node[above] {$f_{< \ast}$} (b);
\draw[->] (b) -- node[right] {$\operatorname{Hur}$} (d);
\draw[->] (c) -- node[above] {$f_{< \ast}$} (d);
\end{tikzpicture}
\end{center}
Note that the left vertical map is surjective and the right vertical map is an isomorphism because in degree one, Hurewicz maps are abelianization maps.
Hence, the claim follows because $f_{< \ast} \colon H_{1}\left(L_{<}\right) \rightarrow H_{1}(L)$ is an isomorphism.
\end{example}

\begin{example}\label{remark trivial link bundle}
If $L$ is a path connected cell complex and the fiber bundle $E \rightarrow B$ is trivial, then there exists for any $d \geq 2$ an equivariant Moore approximation $f_{<} \colon L_{<} \rightarrow L$ to $L$ of degree $d$ which induces a surjection on fundamental groups.
Indeed, since the bundle $E \rightarrow B$ is trivial, the structure group $G$ can be chosen to be trivial.
Thus, by the results of \cite{wra}, $L$ admits a cellular Moore approximation $f_{<} \colon L_{<} \rightarrow L$ of degree $d \geq 2$ in such a way that $f_{<}$ restricts to the identity on $(d-1)$-skeleta.
In particular, the map induced by $f_{<}$ on fundamental groups is surjective.
\end{example}

\section{Rational homotopy theory}\label{rational homotopy theory}
\Cref{main theorem} reveals a natural relation between the rational Hurewicz homomorphism of the truncation cone and the condition that in some degree all complementary cup products in the reduced cohomology ring of the truncation cone vanish.
In this section we study the homotopy theoretic ramifications of this cohomological vanishing condition (see e.g. \Cref{corollary hurewicz theorem}).

The main purpose of this section is prove (see \Cref{proof of ration poincare duality space}) the following

\begin{theorem}\label{proposition rational poincare duality space}
Let $n \geq 3$ be an integer.
Let $\pi \colon E^{n-1} \rightarrow B$ be a fiber bundle of connected closed manifolds with connected closed manifold fiber $L$ and structure group $G$ such that $B$, $E$ and $L$ are compatibly oriented.
Suppose that $B$ admits a good open cover (this holds whenever $B$ is smooth or at least PL).
Let $\overline{p}$ and $\overline{q}$ be complementary perversities, and set $k = c-\overline{p}(c+1)$ and $l = c-\overline{q}(c+1)$, where $c = \operatorname{dim} L$.
Suppose that the fiber $L$ possesses a $G$-equivariant Moore approximation $f_{<} \colon L_{<} \rightarrow L$ both of degree $k$ and of degree $l$ such that $f_{<}$ induces a surjection on fundamental groups.
Furthermore, we suppose that
\begin{enumerate}[(1)]
\item $n \leq 3 \cdot \operatorname{max}\{k, l\} -1$, and
\item $\mathcal{O}_{i}(\pi, k, l) = 0$ for all $i$.
\end{enumerate}
Then, the rational Hurewicz homomorphism of the associated truncation cone,
$$
\operatorname{Hur}_{n-1 \ast} \colon \pi_{n-1}(\operatorname{cone}(F_{<}), \operatorname{pt}) \otimes_{\mathbb{Z}} \mathbb{Q} \rightarrow H_{n-1}(\operatorname{cone}(F_{<})),
$$
is surjective.
\end{theorem}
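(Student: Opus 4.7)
The plan is to combine the classical rational Hurewicz theorem with a Sullivan minimal model analysis of the truncation cone, using hypothesis (2) to rule out nonzero decomposable cohomology in degree $n-1$. Set $X := \operatorname{cone}(F_{<})$ and $m := \max\{k, l\}$. Since $f_{<}$ induces a surjection on fundamental groups, \Cref{lemma simply connected cone} gives that $X$ is simply connected, while \Cref{lemma homology of truncation cone} gives $\widetilde{H}_{i}(X) = 0$ for $i < m$. Thus $X$ is rationally $(m-1)$-connected, and the Cartan--Serre rational Hurewicz theorem already yields surjectivity of $\operatorname{Hur}_{n-1\,\ast}$ whenever $n - 1 \leq 2m - 1$. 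So I may assume $2m \leq n - 1 \leq 3m - 2$, the upper bound being hypothesis (1).

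Next, let $(\Lambda V, d)$ be a minimal Sullivan model of $X$. Rational $(m-1)$-connectivity gives $V^{i} = 0$ for $i < m$, and minimality gives $d(V) \subseteq \Lambda^{\geq 2}V$. The standard rational dual of the Hurewicz map, $H^{n-1}(X;\mathbb{Q}) \to \operatorname{Hom}(\pi_{n-1}(X)\otimes_{\mathbb{Z}} \mathbb{Q}, \mathbb{Q}) \cong V^{n-1}$, is realized by the well-defined ``indecomposable part'' map $\rho$ that sends a cohomology class $[\alpha]$ to the word-length-one component of any cocycle representative $\alpha \in \Lambda V$; this is well-defined precisely because $d(\Lambda V) \subseteq \Lambda^{\geq 2}V$. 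So surjectivity of $\operatorname{Hur}_{n-1\,\ast}$ is equivalent to injectivity of $\rho$.

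To verify injectivity of $\rho$, suppose $\rho[\alpha] = 0$; then I can choose a cocycle representative $\alpha \in \Lambda^{\geq 2}V$ of degree $n - 1$. Because $\Lambda^{k}V$ sits in degrees $\geq km$, the inequality $n - 1 \leq 3m - 2 < 3m$ forces $\alpha \in \Lambda^{2}V$. Write $\alpha = \sum_{i} v_{i}\, w_{i}$ with $v_{i}, w_{i} \in V$ homogeneous of degrees $\geq m$ summing to $n - 1$. The crucial dimensional squeeze is that each factor then has degree at most $(n-1) - m \leq 2m - 2$; consequently $dv_{i}$ and $dw_{i}$ would land in $\Lambda^{\geq 2}V$ in degree at most $2m - 1$, which is zero, so each $v_{i}$ and $w_{i}$ is automatically closed. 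Therefore in $H^{n-1}(X;\mathbb{Q})$ we have
\[
[\alpha] = \sum_{i} [v_{i}] \cup [w_{i}],
\]
a sum of cup products of reduced classes in complementary degrees $\geq m$. By hypothesis (2), $\mathcal{O}_{\ast}(\pi, k, l) = 0$ means every such complementary product in $\widetilde{H}^{n-1}(X)$ vanishes, hence $[\alpha] = 0$. This shows $\rho$ is injective and concludes the proof.

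The main obstacle is purely bookkeeping inside the minimal model: the content lies in the dimensional squeeze that (a) confines the decomposable part of any cocycle in degree $n-1$ to $\Lambda^{2}V$ via $n - 1 < 3m$, and (b) forces both factors of such a product into degrees $\leq 2m - 2$ so they are automatically cocycles, allowing genuine cup-product vanishing in cohomology to be applied.
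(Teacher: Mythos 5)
Your argument is correct and follows essentially the same route as the paper: simple connectivity and $(\max\{k,l\}-1)$-connectivity of $\operatorname{cone}(F_{<})$ via \Cref{lemma simply connected cone} and \Cref{lemma homology of truncation cone}, followed by a minimal Sullivan model degree squeeze (the paper's \Cref{lemma minimal sullivan model}, \Cref{lemma non-vanishing zeta homomorphism} and \Cref{proposition hurewicz and sullivan}, packaged as \Cref{corollary hurewicz theorem}) showing that vanishing of $(n-1)$-complementary products forces injectivity of the projection to indecomposables in degree $n-1$, hence surjectivity of the rational Hurewicz homomorphism. The only detail you leave implicit is that $H_{\ast}(\operatorname{cone}(F_{<}))$ is of finite type, which the identification $V^{n-1} \cong \operatorname{Hom}(\pi_{n-1}\otimes_{\mathbb{Z}}\mathbb{Q},\mathbb{Q})$ requires and which the paper verifies via the surjection $H_{r}(E) \rightarrow H_{r}(\operatorname{cone}(F_{<}))$ obtained from the good-cover hypothesis.
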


\subsection{Minimal Sullivan algebras}\label{minimal sullivan algebras}
Assuming the ground field to be the rationals, we provide some necessary notation from \cite{fht}.

By a \emph{graded (rational) vector space} we mean a collection $V = \{V^{p}\}_{p \geq 1}$ of rational vector spaces (see \cite[p. 40]{fht} and \cite[p. 138]{fht}).
We say that $v \in V^{p}$ is an element of $V$ of degree $p$, and write $v \in V$ and $|v| = p$.
As in \cite[p. 42]{fht}, we use the notation $V^{\geq p} = \{V^{q}\}_{q \geq p}$, $V^{< p} = \{V^{q}\}_{q < p}$, etc.

As in Example 6 of \cite[p. 45]{fht}, the \emph{free graded commutative algebra} of $V$ is the quotient algebra
$\Lambda V = TV/I$,
where $TV$ denotes the tensor algebra of $V$ (see Example 4 in \cite[p. 45]{fht}), and $I \subset TV$ denotes the ideal generated by all elements of the form $v \otimes w - (-1)^{|v| \cdot |w|} w \otimes v$, where $v, w \in V$.
Following \cite[p. 140]{fht}, we also write $\Lambda V^{\geq p} = \Lambda (V^{\geq p})$, $\Lambda V^{< p} = \Lambda (V^{< p})$, etc.

Note that, as vector spaces, $\Lambda V = \bigoplus_{p=0}^{\infty}\Lambda^{p} V$, where $\Lambda^{p} V$ denotes the linear span of all products $v_{1} \wedge \dots \wedge v_{p}$, $v_{1}, \dots, v_{p} \in V$, of word length $p$ and degree $|v_{1}| + \dots + |v_{p}|$ (see Example 6(vi) in \cite[p. 46]{fht} as well as \cite[p. 140]{fht}).
We will also write $\Lambda^{\geq p} V = \bigoplus_{q \geq p} \Lambda^{q} V$, and in particular $\Lambda^{+} V = \Lambda^{\geq 1} V$.

Recall from \cite[p. 138]{fht}) that a \emph{Sullivan algebra} is a commutative cochain algebra of the form $(\Lambda V, d)$ whose differential $d$ satisfies the following nilpotence condition.
It is required that there exists an increasing sequence $V(0) \subset \dots \subset V(k) \subset \dots$ of graded linear subspaces of $V$ such that $V = \bigcup_{k=0}^{\infty}V(k)$, $d=0$ on $V(0)$, and $d(V(k)) \subset \Lambda V(k-1)$ for all $k > 0$.
If the differential $d$ of the Sullivan algebra $(\Lambda V, d)$ satisfies in addition $\operatorname{im} d \subset \Lambda^{\geq 2} V$, then we call the Sullivan algebra $(\Lambda V, d)$ \emph{minimal}.

Note that if a Sullivan algebra $(\Lambda V, d)$ is minimal, then the projection
$$
\rho \colon \Lambda^{+} V = \bigoplus_{q \geq 1} \Lambda^{q} V \rightarrow \Lambda^{1} V = V
$$
induces according to the discussion in \cite[p. 173]{fht} a homomorphism
$$
\zeta \colon H^{+}(\Lambda V) = \frac{\operatorname{ker}(d \colon \Lambda^{+} V \rightarrow \Lambda^{+} V)}{\operatorname{im}(d \colon \Lambda^{+} V \rightarrow \Lambda^{+} V)} \rightarrow V, \qquad \zeta([z]) = \rho(z).
$$

\begin{lemma}\label{lemma non-vanishing zeta homomorphism}
Fix integers $r, s, t \geq 1$.
Suppose that $(\Lambda V, d)$ is a minimal Sullivan algebra whose underlying graded vector space $V$ is of the form $V = \{V^{p}\}_{p \geq r}$, and whose differential $d$ vanishes on all elements of degree $\leq s$.
Furthermore, suppose that all $t$-complementary products in $H^{+}(\Lambda V)$ vanish, that is, for all classes $\alpha \in H^{i}(\Lambda V)$, $\beta \in H^{t-i}(\Lambda V)$, $1 \leq i \leq t-1$, we have $\alpha \wedge \beta = 0$ in $H^{t}(\Lambda V)$.
If $t \leq r+s$, then the associated homomorphism $\zeta \colon H^{+}(\Lambda V) \rightarrow V$ is injective in degree $t$.
\end{lemma}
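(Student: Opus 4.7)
The plan is to prove injectivity of $\zeta$ in degree $t$ directly from the structure of minimal Sullivan algebras. By minimality, $d(\Lambda V) \subset \Lambda^{\geq 2} V$, so the word-length-one projection $\rho$ annihilates all boundaries and $\zeta$ is well-defined on cohomology; the condition $\zeta([z]) = 0$ is then equivalent to the existence of a cocycle representative $z \in (\Lambda^{\geq 2} V)^t$. The task reduces to showing that every such $z$ is a boundary.

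The essential input is a degree count. For any monomial $v_1 \wedge \cdots \wedge v_k$ in $(\Lambda^k V)^t$ with $k \geq 2$ and each $v_i \in V$, every factor satisfies $|v_i| \leq t - (k-1)r \leq t - r \leq s$, where the final inequality is exactly the hypothesis $t \leq r+s$. Hence $dv_i = 0$ by assumption, the Leibniz rule gives $d(v_1 \wedge \cdots \wedge v_k) = 0$, and the same estimate shows that every nonempty proper sub-product of such a monomial is itself a cocycle.

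Decompose $z = z_2 + z_3 + \cdots$ by word length; the sum is finite since $z_k = 0$ whenever $kr > t$. Write each $z_k$ as a linear combination of monomials $v_1 \wedge \cdots \wedge v_k$ in $(\Lambda^k V)^t$; it suffices to exhibit each such monomial as a boundary. For $k = 2$ the factors $v_1, v_2 \in V$ are cocycles whose degrees lie in $[r, t-r] \subset [1, t-1]$, so the complementary-product hypothesis forces $[v_1] \wedge [v_2] = 0$ in $H^t(\Lambda V)$, i.e.\ $v_1 \wedge v_2$ is exact. For $k \geq 3$ split the monomial as $v_1 \wedge (v_2 \wedge \cdots \wedge v_k)$; both factors are cocycles by the estimate above, their cohomological degrees $|v_1|$ and $t - |v_1|$ lie in $[r, t-r] \subset [1, t-1]$, and the hypothesis again yields exactness. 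Summing over monomials shows that each $z_k$ is a boundary, hence so is $z$, and therefore $[z] = 0$.

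The principal technical point is the degree bookkeeping in the second paragraph, which both forces $d$ to vanish on $(\Lambda^{\geq 2} V)^t$ and ensures that the two factors produced by the splitting always land in the range $[1, t-1]$ where the complementary-cup-product hypothesis applies. Once this is in place the argument is essentially a direct application of the hypothesis, and no further rational-homotopy machinery is required.
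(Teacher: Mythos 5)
Your proposal is correct and follows essentially the same argument as the paper: a class killed by $\zeta$ has a representative in $(\Lambda^{\geq 2}V)^{t}$, the bound $t\leq r+s$ forces every factor of every monomial of word length $\geq 2$ to have degree between $r$ and $s$ and hence to be a cocycle, and the vanishing of $t$-complementary products then makes each monomial (and so the whole representative) exact. Your write-up is in fact a bit more explicit than the paper's at two points — splitting a length-$k$ monomial as $v_{1}\wedge(v_{2}\wedge\cdots\wedge v_{k})$ to land exactly in the two-factor hypothesis, and noting that $\zeta([z])=0$ means precisely $z\in\Lambda^{\geq 2}V$ — but the underlying proof is the same.
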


\begin{proof}
Suppose that $[z] \in H^{t}(\Lambda V)$ is a class such that $\zeta([z]) = 0$.
In particular, $z$ is a cocycle of degree $t$ in $(\Lambda V, d)$ satisfying $z \in \operatorname{im} d$.
Thus, since $\operatorname{im} d \subset \Lambda^{\geq 2} V$, $z$ can be written as a rational linear combination of elements of the form $v_{1} \wedge \dots \wedge v_{m}$, where $m \geq 2$, and $v_{1}, \dots, v_{m} \in V$ are nonzero elements such that $|v_{1}| + \dots + |v_{m}| = t$.
Since $m \geq 2$, it follows from $|v_{1}|, \dots, |v_{m}| \geq r$ and $t \leq r+s$ that $|v_{1}|, \dots, |v_{m}| \leq s$.
Consequently, the elements $v_{1}, \dots, v_{m} \in V^{\leq s}$ are all cocycles in $(\Lambda V, d)$.
Hence, $[v_{1}] \wedge \dots \wedge [v_{m}] = 0$ in $H^{t}(\Lambda V)$ according to the vanishing assumption on $t$-complementary products in $H^{+}(\Lambda V)$.
This shows that $[z] = 0$ in $H^{t}(\Lambda V)$.
\end{proof}

The following example shows that the assumption $t \leq r+s$ is in general necessary in \Cref{lemma non-vanishing zeta homomorphism}.

\begin{example}\label{example minimal model}
Fix an odd integer $u \geq 3$.
Suppose that the only nonzero parts of the graded vector space $V$ are $V^{u}$ with basis $\{x, y\}$, and $V^{2u-1}$ with basis $\{z\}$.
Then, it is easy to check that $(\Lambda V, d)$ is a minimal Sullivan algebra with differential $d$ determined by $d x = d y = 0$ and $d z = xy$.
By construction, the only non-trivial cohomology groups of $H^{+}(\Lambda V)$ are $H^{u}(\Lambda V)$ (generated by the classes of $x$ and $y$), $H^{3u-1}(\Lambda V)$ (generated by the classes of $xz$ and $yz$), and $H^{4u-1}(\Lambda V)$ (generated by the class of $xyz$).
Now let $r = u$, $s = 2u-2$, and $t = r+s+1 = 3u-1$.
Then, the minimal Sullivan algebra $(\Lambda V, d)$ satisfies all assumptions of \Cref{lemma non-vanishing zeta homomorphism} except for $t \leq r+s$.
Furthermore, the homomorphism $\zeta \colon H^{+}(\Lambda V) \rightarrow V$ is clearly not injective in degree $t$ because $V^{3u-1} = 0$, whereas $H^{3u-1}(\Lambda V) \neq 0$.
\end{example}

\subsection{Minimal Sullivan models}
Given any commutative cochain algebra $(A, d)$, a \emph{Sullivan model} for $(A, d)$ is a Sullivan algebra $(\Lambda V, d)$ together with a quasi-isomorphism $m \colon (\Lambda V, d) \stackrel{\simeq}{\longrightarrow} (A, d)$.
The Sullivan model is called \emph{minimal} if the corresponding Sullivan algebra $(\Lambda V, d)$ is minimal.
It can be shown that any commutative cochain algebra $(A, d)$ with $H^{0}(A) \cong \mathbb{Q}$ possesses a unique minimal Sullivan model (see the corollary to Theorem 14.12 in \cite[p. 191]{fht}).

Given a commutative cochain algebra $(A, d)$ such that $H^{0}(A) \cong \mathbb{Q}$ and $H^{1}(A) = 0$, the following explicit algorithm for constructing the (unique) minimal Sullivan model $m \colon (\Lambda V, d) \stackrel{\simeq}{\longrightarrow} (A, d)$ is described before Proposition 12.2 in \cite[p. 145]{fht}.
Starting with a morphism $m_{2} \colon (\Lambda V^{2}, 0) \rightarrow (A, d)$ such that $H^{2}(m_{2}) \colon V^{2} \stackrel{\cong}{\longrightarrow} H^{2}(A)$, the construction provides inductively for $k = 2, 3, \dots$ a vector space $V^{k+1}$, an extension of the (derivational) differential $d$ from $\Lambda V^{\leq k}$ to $\Lambda V^{\leq k+1} = \Lambda V^{\leq k} \otimes \Lambda V^{k+1}$, and an extension of $m_{k} \colon (\Lambda V^{\leq k}, d) \rightarrow (A, d)$ to a cochain algebra morphism $m_{k+1} \colon (\Lambda V^{\leq k+1}, d) \rightarrow (A, d)$, such that, for all $k = 2, 3, \dots$, $H^{i}(m_{k})$ is an isomorphism for $i \leq k$, and $H^{k+1}(m_{k})$ is injective.
Having constructed $m_{k} \colon (\Lambda V^{\leq k}, d) \rightarrow (A, d)$, the next step of the induction is as follows.
Choose cocycles $a_{\alpha} \in A^{k+1}$ and $z_{\beta} \in (\Lambda V^{\leq k})^{k+2}$ such that
\begin{align*}
H^{k+1}(A) &= \operatorname{im} H^{k+1}(m_{k}) \oplus \bigoplus_{\alpha} \mathbb{Q} \cdot [a_{\alpha}], \\
\operatorname{ker} H^{k+2}(m_{k}) &= \bigoplus_{\beta} \mathbb{Q} \cdot [z_{\beta}].
\end{align*}
Let $V^{k+1}$ be a vector space with basis $\{v'_{\alpha}\} \cup \{v''_{\beta}\}$ in correspondence with the elements $\{a_{\alpha}\} \cup \{z_{\beta}\}$.
The (derivational) differential $d$ is extended from $\Lambda V^{\leq k}$ to $\Lambda V^{\leq k+1} = \Lambda V^{\leq k} \otimes \Lambda V^{k+1}$ by setting $d v'_{\alpha} = 0$ and $d v''_{\beta} = z_{\beta}$.
Finally, the map $m_{k} \colon (\Lambda V^{\leq k}, d) \rightarrow (A, d)$ is extended to a cochain algebra morphism
$$
m_{k+1} \colon (\Lambda V^{\leq k+1}, d) \rightarrow (A, d)
$$
via $m_{k+1} v'_{\alpha} = a_{\alpha}$ and $m_{k+1} v''_{\beta} = b_{\beta}$, where $b_{\beta} \in A^{k+1}$ has been chosen in such a way that $d b_{\beta} = m_{k} z_{\beta}$.

\begin{lemma}\label{lemma minimal sullivan model}
Consider a commutative cochain algebra $(A, d)$ satisfying $H^{0}(A) \cong \mathbb{Q}$ and $H^{1}(A) = 0$.
Let $r \geq 2$ be the smallest integer such that $H^{r}(A) \neq 0$.
If $m \colon (\Lambda V, d) \stackrel{\simeq}{\longrightarrow} (A, d)$ denotes the (unique) minimal Sullivan model for $(A, d)$, then the underlying graded vector space $V$ is of the form $V = \{V^{p}\}_{p \geq r}$, and the differential $d$ vanishes on all elements of degree $\leq 2r-2$.
\end{lemma}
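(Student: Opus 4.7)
The plan is to exploit the explicit inductive construction of the minimal Sullivan model recalled immediately before the lemma, proving both assertions by induction on the step $k \geq 2$ of the construction.

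For the first assertion, that $V^{p} = 0$ for $p < r$, the base case is $V^{2} \cong H^{2}(A)$, which vanishes whenever $r > 2$ and need not vanish when $r = 2$. For the inductive step at some $k$ with $k+1 < r$, the hypothesis $V^{\leq k} = 0$ forces $\Lambda V^{\leq k}$ to be concentrated in degree $0$, so $H^{k+1}(\Lambda V^{\leq k}) = 0 = H^{k+2}(\Lambda V^{\leq k})$. Combined with $H^{k+1}(A) = 0$ (automatic from $k+1 < r$), both sets of new generators $\{a_{\alpha}\}$ and $\{z_{\beta}\}$ prescribed by the algorithm are empty, whence $V^{k+1} = 0$.

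For the second assertion, that $d$ vanishes on $V^{p}$ for $p \leq 2r-2$, I would induct again, now using the first assertion as input. Fix a step $k \leq 2r-3$, so the next degree under consideration is $k+1 \leq 2r-2$. By the first assertion, all elements of $V^{\leq k}$ have degree $\geq r$. Decomposing by word length, an arbitrary element of $(\Lambda V^{\leq k})^{k+2}$ is a rational combination of products $v_{1} \wedge \cdots \wedge v_{m}$ with each $v_{i} \in V^{\leq k}$ and $|v_{1}| + \cdots + |v_{m}| = k+2$. Word length $m = 1$ would require $v_{1} \in V^{k+2}$, impossible as $k+2 > k$; and $m \geq 2$ yields $k+2 \geq mr \geq 2r$, contradicting $k \leq 2r-3$. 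Therefore $(\Lambda V^{\leq k})^{k+2} = 0$, so $\ker H^{k+2}(m_{k}) = 0$, and no generators of type $v''_{\beta}$ arise in passing from $V^{\leq k}$ to $V^{\leq k+1}$. Only the $v'_{\alpha}$ generators appear, and these satisfy $dv'_{\alpha} = 0$ by construction.

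No essential obstacle is anticipated. The bookkeeping around the base step $k=2$ is immediate from the definition of $m_{2}$. The bound $2r-2$ is moreover sharp, since at the next step $k = 2r-2$ a product of two classes in $V^{r}$ can produce a nontrivial element of $(\Lambda V^{\leq 2r-2})^{2r}$, giving rise to a possible $v''_{\beta}$ generator in $V^{2r-1}$ on which $d$ acts nontrivially, consistent with \Cref{example minimal model}.
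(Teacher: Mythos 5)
Your proof is correct and follows essentially the same route as the paper: the key step in both is to show $(\Lambda V^{\leq k})^{k+2} = 0$ for $k = 2, \dots, 2r-3$ (your word-length argument is just a rephrasing of the paper's degree count), so that $\operatorname{ker} H^{k+2}(m_{k}) = 0$ and only generators $v'_{\alpha}$ with $dv'_{\alpha} = 0$ are added in degrees $\leq 2r-2$. The only difference is that for the statement $V = \{V^{p}\}_{p \geq r}$ the paper simply cites Proposition 12.2(ii) of F\'{e}lix--Halperin--Thomas, whereas you rederive it from the inductive construction, which is a correct and harmless elaboration.
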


\begin{proof}
The claim $V = \{V^{p}\}_{p \geq r}$ is part of Proposition 12.2(ii) in \cite[p. 145]{fht}.

In order to show that the differential $d$ vanishes on all elements of degree $\leq 2r-2$, it suffices by the inductive construction of $d$ to show that $\operatorname{ker} H^{k+2}(m_{k}) = 0$ for $k= 2, \dots, 2r-3$.
(For $r = 2$ there is nothing to show since $d$ vanishes on all elements of degree $\leq 2$ by construction.)
Fix $k= 2, \dots, 2r-3$.
If $k < r$, then $V = \{V^{p}\}_{p \geq r}$ implies that $(\Lambda V^{\leq k})^{0} = \mathbb{Q}$ and $(\Lambda V^{\leq k})^{p} = 0$ for $p > 0$.
If, however, $k \geq r$, then $V = \{V^{p}\}_{p \geq r}$ implies that
\begin{align*}
(\Lambda V^{\leq k})^{p} = \begin{cases}
\mathbb{Q}, \quad &p = 0, \\
0, \quad &p = 1, \dots, r-1, \\
V^{p}, \quad &p = r, \dots, k, \\
0, \quad &p = k+1, \dots, 2r-1.
\end{cases}
\end{align*}
In any case, we see that $(\Lambda V^{\leq k})^{k+2} = 0$, and, in particular, $\operatorname{ker} H^{k+2}(m_{k}) = 0$.
\end{proof}

\subsection{Commutative cochain algebras for spaces}
Recall from Section 10 in \cite[p. 115 ff.]{fht} that Sullivan has constructed a contravariant functor $A_{PL}$ from the category of topological spaces and continuous maps to the category of commutative cochain algebras and cochain algebra morphisms.
Furthermore, the functor $A_{PL}$ has the important property that for any topological space $X$, the graded algebras $H^{\ast}(X)$ and $H(A_{PL}(X))$ are naturally isomorphic (and can hence be identified).

Given a path connected topological space $X$, we can in particular consider the minimal Sullivan model for $X$, that is, the (unique) minimal Sullivan model $m_{X} \colon (\Lambda V_{X}, d) \stackrel{\simeq}{\rightarrow} A_{PL}(X)$ for the commutative cochain algebra $(A, d) = A_{PL}(X)$.
Moreover, recall from \Cref{minimal sullivan algebras} that we can associate to the minimal Sullivan algebra $(\Lambda V_{X}, d)$ a homomorphism $\zeta_{X} \colon H^{+}(\Lambda V_{X}) \rightarrow V_{X}$.

In \Cref{proposition hurewicz and sullivan} below, we characterize the non-vanishing of the rational Hurewicz homomorphism in terms of rational homotopy theory (compare the proof of Proposition 3.14 in \cite[p. 250]{klim}).

\begin{proposition}\label{proposition hurewicz and sullivan}
Let $(X, x_{0})$ be a simply connected pointed space, and suppose that $H_{\ast}(X)$ is of finite type (i.e., the rational vector space $H_{r}(X)$ has finite dimension for all $r \in \mathbb{Z}$).
Then for any integer $n \geq 3$ the following statements are equivalent:
\begin{enumerate}[$(i)$]
\item The Hurewicz homomorphism
$$
\operatorname{Hur}_{n-1 \ast} \colon \pi_{n-1}(X, x_{0}) \otimes_{\mathbb{Z}} \mathbb{Q} \rightarrow H_{n-1}(X)
$$
is surjective.
\item The homomorphism $\zeta_{X}^{n-1} \colon H^{n-1}(\Lambda V_{X}) \rightarrow V_{X}^{n-1}$ is injective.
\end{enumerate}
Moreover, if either of the above statements is satisfied for $X$, then all $(n-1)$-complementary cup products in $\widetilde{H}^{\ast}(X)$ vanish.
\end{proposition}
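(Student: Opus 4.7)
The plan is to identify the homomorphism $\zeta_X^{n-1}$ with the rational dual Hurewicz map via the standard dictionary between minimal Sullivan models and rational homotopy theory. Specifically, for a simply connected space $X$ of finite type, Theorem 15.11 in \cite{fht} provides natural isomorphisms $\nu^p \colon V_X^p \stackrel{\cong}{\longrightarrow} \operatorname{Hom}_{\mathbb{Z}}(\pi_p(X, x_0), \mathbb{Q})$, and the composition
\[
H^p(X) \stackrel{H^p(m_X)^{-1}}{\longrightarrow} H^p(\Lambda V_X) \stackrel{\zeta_X^p}{\longrightarrow} V_X^p \stackrel{\nu^p}{\longrightarrow} \operatorname{Hom}_{\mathbb{Z}}(\pi_p(X, x_0), \mathbb{Q})
\]
agrees with the dual Hurewicz map $\operatorname{Hur}_p^{\ast}$. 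Setting $p = n-1$, the equivalence (i)$\Leftrightarrow$(ii) then reduces to the elementary observation that a $\mathbb{Q}$-linear map is surjective if and only if its $\mathbb{Q}$-dual is injective; this is seen by applying the exact functor $\operatorname{Hom}_{\mathbb{Q}}(-, \mathbb{Q})$ to the cokernel sequence of $\operatorname{Hur}_{n-1 \ast}$.

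For the ``moreover'' statement, I would assume (ii) and let $\alpha \in H^i(X)$, $\beta \in H^{n-1-i}(X)$ with $1 \leq i \leq n-2$. Using the quasi-isomorphism $m_X$, I pick cocycles $a \in (\Lambda V_X)^i$ and $b \in (\Lambda V_X)^{n-1-i}$ representing classes that $H^{\ast}(m_X)$ sends to $\alpha$ and $\beta$. Simple connectedness of $X$ forces $V_X^{0} = V_X^{1} = 0$, so every positive degree cocycle lies in $\Lambda^{+} V_X$. As $i, n-1-i \geq 1$, both $a$ and $b$ lie in $\Lambda^{+} V_X$, hence $a \wedge b \in \Lambda^{\geq 2} V_X$. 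Therefore $\rho(a \wedge b) = 0$ by definition of the projection $\rho \colon \Lambda^{+} V_X \to V_X$, so $\zeta_X^{n-1}([a \wedge b]) = 0$. Injectivity of $\zeta_X^{n-1}$ yields $[a \wedge b] = 0$ in $H^{n-1}(\Lambda V_X)$, and applying the ring isomorphism $H^{\ast}(m_X)$ we conclude $\alpha \cup \beta = 0$ in $\widetilde{H}^{n-1}(X)$.

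The main obstacle is the identification of $\zeta_X^{n-1}$ with the dual rational Hurewicz map invoked in the first step. This is a non-trivial theorem of rational homotopy theory due to Sullivan, whose proof relies on the spatial realization of Sullivan algebras and the computation of rational homotopy groups in terms of the indecomposables of the minimal model; I would simply cite the relevant result from \cite{fht} rather than reproduce it. The finite-type hypothesis on $H_{\ast}(X)$ is needed both to apply Sullivan's theorem and to ensure that $V_X^p$ is finite-dimensional in each degree, but tracking this hypothesis requires no further work.
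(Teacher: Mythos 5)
Your proposal is correct and follows essentially the same route as the paper: both identify $\zeta_{X}^{n-1}$ with the dual rational Hurewicz map via (the corollary to) Theorem 15.11 of F\'{e}lix--Halperin--Thomas, deduce the equivalence $(i)\Leftrightarrow(ii)$ by dualizing over $\mathbb{Q}$, and obtain the cup product statement from the fact that a product of two positive-degree cocycles is decomposable, hence killed by $\rho$. The only cosmetic difference is that the paper argues the ``moreover'' part by contraposition while you argue directly from injectivity of $\zeta_{X}^{n-1}$.
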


\begin{proof}
We fix a minimal Sullivan model $m_{X} \colon (\Lambda V_{X}, d) \stackrel{\simeq}{\longrightarrow} A_{PL}(X)$ for $X$.

Since $(X, x_{0})$ is simply connected and $H_{\ast}(X)$ is of finite type, the corollary to Theorem 15.11 in \cite[p. 210]{fht} implies that in degree $n-1 > 1$ the homomorphism
$\zeta_{X}^{n-1} \colon H^{n-1}(\Lambda V_{X}) \rightarrow V_{X}^{n-1}$
can be identified with the dual
$$
\operatorname{Hur}_{n-1}^{\ast} \colon H^{n-1}(X) \rightarrow \operatorname{Hom}_{\mathbb{Z}}(\pi_{n-1}(X), \mathbb{Q})
$$
of the Hurewicz map $\operatorname{Hur}_{n-1} \colon \pi_{n-1}(X) \rightarrow H_{n-1}(X; \mathbb{Z})$. 
Since the extension of scalars functor $- \otimes_{\mathbb{Z}} \mathbb{Q}$ is left adjoint to the restriction of scalars functor (along $\mathbb{Z} \subset \mathbb{Q}$), the latter homomorphism can furthermore be identified with
$$
\operatorname{Hom}_{\mathbb{Q}}(\operatorname{Hur}_{n-1 \ast}(-), \mathbb{Q}) \colon \operatorname{Hom}_{\mathbb{Q}}(H_{n-1}(X) \otimes_{\mathbb{Z}} \mathbb{Q}, \mathbb{Q}) \rightarrow \operatorname{Hom}_{\mathbb{Q}}(\pi_{n-1}(X) \otimes_{\mathbb{Z}} \mathbb{Q}, \mathbb{Q}).
$$
Thus, the equivalence $(i) \Leftrightarrow (ii)$ follows from the observation that a homomorphism $V \rightarrow W$ of rational vector spaces is surjective if and only if its dual $\operatorname{Hom}_{\mathbb{Q}}(W, \mathbb{Q}) \rightarrow \operatorname{Hom}_{\mathbb{Q}}(V, \mathbb{Q})$ is injective.

Let us suppose that there is a non-vanishing $(n-1)$-complementary cup product in $\widetilde{H}^{\ast}(X)$, that is, for some $i \in \{1, \dots, n-2\}$, there exist classes $\alpha \in H^{i}(X)$ and $\beta \in H^{n-1-i}(X)$ such that $0 \neq \alpha \cup \beta \in H^{n-1}(X)$.
As $H^{\ast}(A_{PL}(X)) \cong H^{\ast}(X)$ (as graded algebras) and $m_{X} \colon (\Lambda V_{X}, d) \stackrel{\simeq}{\longrightarrow} A_{PL}(X)$ is a quasi-isomorphism of commutative differential graded algebras, there exist cocycles $a \in \Lambda V_{X}$ of degree $i$ and $b \in \Lambda V_{X}$ of degree $n-1-i$ such that $0 \neq [a \wedge b] \in H^{n-1}( \Lambda V_{X})$.
But then $\zeta_{X}^{n-1}$ is not injective because $a \wedge b \in \Lambda^{\geq 2} V$ implies that $\zeta_{X}([a \wedge b]) = \rho(a \wedge b) = 0$.
Hence, we have shown that all $(n-1)$-complementary cup products in $\widetilde{H}^{\ast}(X)$ must vanish if $X$ satisfies either of the statements $(i)$ and $(ii)$.
\end{proof}
\begin{remark}
In the context of \Cref{main theorem} we have exploited statement $(i)$ in our proof that the local duality obstructions vanish.
Under the assumption that $X  = \operatorname{cone}(F_{<})$ is simply connected (compare \Cref{lemma simply connected cone}), \Cref{proposition hurewicz and sullivan} provides another proof by means of the minimal Sullivan model of $X$, based on statement $(ii)$.
\end{remark}

\subsection{A rational Hurewicz theorem}
One part of \Cref{proposition hurewicz and sullivan} states that in a given degree, surjectivity of the rational Hurewicz homomorphism implies that complementary cup products in the reduced cohomology ring vanish.
Our purpose is to prove the converse implication under reasonable hypotheses.
This is addressed in \Cref{corollary hurewicz theorem}, where the additional assumption is that the considered degree $n-1$ should not be too large compared to the rational connectedness $r$ of the space $X$.
Note that for $n-1 = 2r-1$, \Cref{corollary hurewicz theorem} specializes to a part of the classical rational Hurewicz theorem (for spaces with homology of finite type) because the vanishing condition for complementary cup products is trivially satisfied.
This is exactly the implication that is employed in \cite{klim} in the context of isolated singularities.

\begin{corollary}\label{corollary hurewicz theorem}
Let $(X, x_{0})$ be a simply connected pointed space, and suppose that $H_{\ast}(X)$ is of finite type (i.e., the rational vector space $H_{r}(X)$ has finite dimension for all $r \in \mathbb{Z}$).
Let $n \geq 3$ be an integer such that the following assumptions hold:
\begin{enumerate}[(1)]
\item If $r \geq 2$ denotes the smallest positive integer such that $H^{r}(X) \neq 0$, then $n \leq 3r-1$.
\item All $(n-1)$-complementary cup products in $\widetilde{H}^{\ast}(X)$ vanish, that is, for all $\alpha \in H^{i}(X)$, $\beta \in H^{n-1-i}(X)$, $1 \leq i \leq n-2$, we have $0 = \alpha \cup \beta \in H^{n-1}(X)$.
\end{enumerate}
Then, the rational Hurewicz homomorphism
$$
\operatorname{Hur}_{n-1 \ast} \colon \pi_{n-1}(X, x_{0}) \otimes_{\mathbb{Z}} \mathbb{Q} \rightarrow H_{n-1}(X)
$$
is surjective.
\end{corollary}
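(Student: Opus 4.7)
The plan is to string together the three preceding results: Proposition~\ref{proposition hurewicz and sullivan}, Lemma~\ref{lemma minimal sullivan model}, and Lemma~\ref{lemma non-vanishing zeta homomorphism}. Since $(X, x_{0})$ is simply connected with $H_{\ast}(X)$ of finite type, Proposition~\ref{proposition hurewicz and sullivan} reduces the desired surjectivity of $\operatorname{Hur}_{n-1 \ast}$ to showing that the homomorphism
\[
\zeta_{X}^{n-1} \colon H^{n-1}(\Lambda V_{X}) \longrightarrow V_{X}^{n-1}
\]
associated to the minimal Sullivan model $m_{X} \colon (\Lambda V_{X}, d) \stackrel{\simeq}{\longrightarrow} A_{PL}(X)$ is injective.

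Next I would analyze the structure of $(\Lambda V_{X}, d)$ via Lemma~\ref{lemma minimal sullivan model}. Assumption~(1) guarantees that $r \geq 2$ is the smallest positive degree in which the cohomology of $A_{PL}(X)$ is nontrivial, so in particular $H^{0}(A_{PL}(X)) \cong \mathbb{Q}$ and $H^{1}(A_{PL}(X)) = 0$, and Lemma~\ref{lemma minimal sullivan model} yields $V_{X} = \{V_{X}^{p}\}_{p \geq r}$ together with the fact that the differential $d$ vanishes on all elements of degree $\leq 2r-2$.

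The final step is to invoke Lemma~\ref{lemma non-vanishing zeta homomorphism} with parameters $r$ as above, $s = 2r-2$, and $t = n-1$. The numerical hypothesis $t \leq r+s$ of that lemma translates to $n-1 \leq 3r-2$, which is precisely assumption~(1). The vanishing of $(n-1)$-complementary products in $H^{+}(\Lambda V_{X})$ required by the lemma follows from assumption~(2) via the graded algebra isomorphism $H^{\ast}(\Lambda V_{X}) \cong H^{\ast}(X)$ induced by the quasi-isomorphism $m_{X}$, which restricts to an isomorphism $H^{+}(\Lambda V_{X}) \cong \widetilde{H}^{\ast}(X)$. Lemma~\ref{lemma non-vanishing zeta homomorphism} then delivers the injectivity of $\zeta_{X}^{n-1}$, completing the proof.

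There is essentially no hidden difficulty: both the strategy and the bookkeeping are dictated by the preceding lemmas. The only point demanding attention is that the numerical hypothesis $n \leq 3r-1$ is sharp for this argument; indeed, Example~\ref{example minimal model} illustrates that already the case $n = 3r$ (equivalently $t = r+s+1$) can fail to yield injectivity of $\zeta^{t}$, so assumption~(1) cannot be weakened within this framework.
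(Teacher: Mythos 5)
Your proposal is correct and follows essentially the same route as the paper: reduce via Proposition~\ref{proposition hurewicz and sullivan} to injectivity of $\zeta_{X}^{n-1}$, use Lemma~\ref{lemma minimal sullivan model} to get $V_{X} = \{V_{X}^{p}\}_{p \geq r}$ with $d$ vanishing in degrees $\leq 2r-2$, and apply Lemma~\ref{lemma non-vanishing zeta homomorphism} with $s = 2r-2$, $t = n-1$, noting $t \leq r+s$ is exactly assumption~(1). The only cosmetic point is that $H^{0}(A_{PL}(X)) \cong \mathbb{Q}$ and $H^{1}(A_{PL}(X)) = 0$ come from simple connectivity rather than from assumption~(1), but this does not affect the argument.
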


\begin{proof}
We consider the commutative cochain algebra $(A, d) = A_{PL}(X)$ and its minimal Sullivan model $m_{X} \colon (\Lambda V_{X}, d) \stackrel{\simeq}{\longrightarrow} A_{PL}(X)$.
Using that the graded algebras $H^{\ast}(X)$ and $H^{\ast}(A_{PL}(X))$ are isomorphic, we obtain from \Cref{lemma minimal sullivan model} that $V_{X} = \{V_{X}^{p}\}_{p \geq r}$, and that the differential $d$ vanishes on all elements of degree $\leq 2r-2$, where $r \geq 2$ denotes the smallest positive integer such that $H^{r}(X) \neq 0$.
Next, we apply \Cref{lemma non-vanishing zeta homomorphism} to the minimal Sullivan algebra $(\Lambda V_{X}, d)$ and the integers $r$, $s = 2r-2$ and $t = n-1$.
Note that $t \leq r + s$ by assumption (1), and all $t$-complementary cup products in $H^{+}(\Lambda V_{X})$ vanish by assumption (2) because $m_{X}$ is a quasi-isomorphism.
Hence, we conclude that the homomorphism $\zeta_{X} \colon H^{+}(\Lambda V_{X}) \rightarrow V_{X}$ is injective in degree $t = n-1$.
Finally, the claim that the Hurewicz homomorphism $\operatorname{Hur}_{n-1 \ast} \colon \pi_{n-1}(X, x_{0}) \otimes_{\mathbb{Z}} \mathbb{Q} \rightarrow H_{n-1}(X)$ is surjective follows from implication $(ii) \Rightarrow (i)$ of \Cref{proposition hurewicz and sullivan}.
\end{proof}

\subsection{Proof of \Cref{proposition rational poincare duality space}}\label{proof of ration poincare duality space}
Since the manifolds $B$ and $L$ are path connected and the $G$-equivariant Moore approximation $f_{<} \colon L_{<} \rightarrow L$ induces by assumption a surjection on fundamental groups, we conclude from \Cref{lemma simply connected cone} that the truncation cone $\operatorname{cone}(F_{<})$ is simply connected.
Next, we wish to apply \Cref{corollary hurewicz theorem} to the simply connected pointed space $(X, x_{0}) = (\operatorname{cone}(F_{<}), \operatorname{pt})$. Note that $H_{\ast}(X)$ is of finite type because the inclusion $E \subset \operatorname{cone}(F_{<})$ induces a surjective homomorphism $H_{r}(E) \rightarrow H_{r}(\operatorname{cone}(F_{<}))$ for all $r \in \mathbb{Z}$ (see properties (a) and (b) of $Q_{\geq}E$ listed in the proof of \Cref{main theorem} in \Cref{proof of main theorem}, where we use the assumption that $B$ admits a good open cover).
The assumptions (1) and (2) of \Cref{proposition rational poincare duality space} imply that the conditions (1) and (2) of \Cref{corollary hurewicz theorem} are satisfied.
(Concerning (1), note that the smallest integer $r \geq 2$ such that $H^{r}(X) \neq 0$ satisfies $r \geq \operatorname{max}\{k, l\}$ according to \Cref{lemma homology of truncation cone}, where we use again the assumption that $B$ admits a good open cover.)

Finally, application of \Cref{corollary hurewicz theorem} completes the proof of \Cref{proposition rational poincare duality space}.

\section{Intersection spaces and the signature}\label{intersection spaces and the signature}
In \Cref{Witt spaces and signature} we apply the results of the previous sections to study middle perversity intersection spaces of depth one Witt spaces within the framework of Thom-Mather stratified spaces.
In particular, given a depth one Witt space $X$ (see \Cref{definition witt space}), \Cref{main corollary}(a) provides our Hurewicz criterion for the existence of a Klimczak completion $\widehat{IX} = IX \cup e^{n}$ of the middle perversity intersection space $IX$.
Then, in part (b) of \Cref{main corollary}, we relate the Hurewicz criterion to the vanishing of the local duality obstructions to Banagl-Chriestenson by showing that they are equivalent if the dimensions of the singular strata are not too big.
Moreover, when the dimension $n$ of $X$ is of the form $n = 4d$, \Cref{main corollary}(c) implies that the signature of the symmetric intersection form $H_{2d}(\widehat{IX}) \times H_{2d}(\widehat{IX}) \rightarrow \mathbb{Q}$ equals the signature of the Goresky-MacPherson-Siegel intersection form $IH_{2d}(X) \times IH_{2d}(X) \rightarrow \mathbb{Q}$ on middle-perversity intersection homology.
Finally, we illustrate our results by a concrete example in \Cref{An example}.

\subsection{Depth one Witt spaces}\label{Witt spaces and signature}
Before focusing on depth one Witt spaces and their middle perversity intersection spaces (see \Cref{main corollary}), we discuss more generally intersection spaces of stratified pseudomanifold of depth $1$ along the lines of \cite{bc}.

First of all, an $n$-dimensional two strata pseudomanifold is a pair $(X, \Sigma)$ consisting of a locally compact, second countable Hausdorff space $X$ and a closed connected subspace $\Sigma$ such that $\Sigma \subset X$ is equipped with a Thom-Mather $C^{\infty}$-stratification of $X$ as follows.
The regular stratum $X \setminus \Sigma$ is a smooth $n$-manifold that is dense in $X$, and the singular stratum $\Sigma$ is a smooth manifold whose codimension in $X$ is at least $2$.
Moreover, $\Sigma$ is required to be equipped with so-called Thom-Mather control data (for details, see Section 8 of \cite{bc}).

Suppose that $(X, \Sigma)$ is an $n$-dimensional two strata pseudomanifold with non-empty singular stratum $\Sigma$.
Then, as explained at the beginning of Section 9 in \cite{bc}, the Thom-Mather control data of $\Sigma \subset X$ can be used to construct an open neighborhood $U$ of $\Sigma$ in $X$ and a smooth (locally trivial) fiber bundle $\pi \colon E \rightarrow B$ with the following properties.
The complement $M = X \setminus U$ is a smooth $n$-manifold with boundary $\partial M = E$, and there exists a homeomorphism from the closure $U \cup \partial M$ of $U$ in $X$ to the homotopy pushout $DE$ of $B \stackrel{\pi}{\longleftarrow} E \stackrel{\operatorname{id}_{E}}{\longrightarrow} E$ that restricts to diffeomorphisms $\Sigma \cong B$ and $(U \cup \partial M) \setminus \Sigma \cong DE \setminus B = E \times (0, 1]$ extending $\partial M = E \cong E \times \{0\}$.
We assume that the data $(U, \pi \colon E \rightarrow B)$ have been fixed, and call $U$ a regular neighborhood, and the fiber bundle $\pi \colon E \rightarrow B$ the link bundle of the singular stratum $\Sigma = B$.
The (non-empty) fiber $L$ of $\pi$ is called the link of the singular stratum $\Sigma = B$.

More generally (see Definition 8.3 in \cite{bc}), an $n$-dimensional stratified pseudomanifold of depth $1$ is a tuple $(X, \Sigma_{1}, \dots, \Sigma_{r})$, where the $\Sigma_{i}$ are mutually disjoint subspaces of $X$ such that for every $i = 1, \dots, r$, the pair $(X \setminus \bigcup_{j \neq i} \Sigma_{j}, \Sigma_{i})$ is a two strata pseudomanifold whose singular stratum $\Sigma_{i}$ has regular neighborhood $U^{(i)}$, and link bundle $\pi^{(i)} \colon E^{(i)} \rightarrow B^{(i)}$ having link $L^{(i)}$.
The stratified depth $1$ pseudomanifold $(X, \Sigma_{1}, \dots, \Sigma_{r})$ is called oriented if the top stratum $X \setminus \bigcup_{i}\Sigma_{i}$ is equipped with an orientation.

Consider an $n$-dimensional stratified pseudomanifold $X = (X, \Sigma_{1}, \dots, \Sigma_{r})$ of depth $1$.
Given a perversity $\overline{p}$, we proceed to explain the construction of the $\overline{p}$-intersection space $I^{\overline{p}}X$, which will depend on the choice of equivariant Moore approximations of the link bundles of the singular strata.
We denote by $c_{i}$ the dimension of the link $L^{(i)}$ of the singular stratum $\Sigma_{i}$, and set $k_{i} = c_{i} - \overline{p}(c_{i} + 1)$, which is a positive integer.
For every $i$ we assume that the link $L^{(i)}$ possesses for some choice of structure group $G^{(i)}$ of the link bundle $\pi^{(i)}$ a $G^{(i)}$-equivariant Moore approximation of degree $k_{i}$, say
$$
f_{< k_{i}}^{(i)} \colon L_{< k_{i}}^{(i)} \rightarrow L^{(i)}.
$$
As explained in the beginning of \Cref{truncation cones}, the Moore approximations $f_{< k_{i}}^{(i)}$ induce fiberwise truncations
$$
F_{<k_{i}}^{(i)} \colon \operatorname{ft}_{< k_{i}} E^{(i)} \rightarrow E^{(i)},
$$
where $\operatorname{ft}_{<k_{i}}E^{(i)}$ is the total space of the fiber bundle $\pi_{<k_{i}}^{(i)} \colon \operatorname{ft}_{<k_{i}} E^{(i)} \rightarrow B^{(i)}$ obtained by replacing the fiber $L^{(i)}$ of $\pi^{(i)}$ with the fiber $L_{<k_{i}}^{(i)}$, and $F_{<k_{i}}^{(i)} \colon \operatorname{ft}_{<k_{i}} E^{(i)} \rightarrow E^{(i)}$ is induced by $f_{<k_{i}}^{(i)} \colon L_{<k_{i}}^{(i)} \rightarrow L^{(i)}$ in such a way that $\pi^{(i)} \circ F_{<k_{i}}^{(i)} = \pi_{<k_{i}}^{(i)}$.
Observe that $M = X \setminus \bigcup_{i}U^{(i)}$ is a smooth manifold with boundary $\partial M = \bigsqcup_{i}E^{(i)}$.

\begin{definition}[compare Definition 9.1 in \cite{bc}]\label{definition intersection space}
The \emph{perversity $\overline{p}$ intersection space $I^{\overline{p}}X$} of the depth $1$ pseudomanifold $X = (X, \Sigma_{1}, \dots, \Sigma_{r})$ is defined as the homotopy cofiber of the composition
$$
\bigsqcup_{i} \operatorname{ft}_{< k_{i}} E^{(i)} \stackrel{\bigsqcup_{i} F_{<k_{i}}^{(i)}}{\longrightarrow} \bigsqcup_{i} E^{(i)} = \partial M \stackrel{\operatorname{incl}}{\longrightarrow} M.
$$
In other words, $I^{\overline{p}}X = \operatorname{cone}(\bigsqcup_{i} F_{<k_{i}}^{(i)}) \cup_{\partial M} M$ is the homotopy pushout of
$$
\operatorname{pt} \longleftarrow \bigsqcup_{i} \operatorname{ft}_{< k_{i}} E^{(i)} \stackrel{\tau}{\longrightarrow} M.
$$
\end{definition}

From now on, we are concerned with Witt spaces, which are an important class of stratified pseudomanifolds defined by Siegel \cite{sieg}.

\begin{definition}[see Definition 8.3 in \cite{bc}]\label{definition witt space}
An oriented depth $1$ stratified pseudomanifold $(X, \Sigma_{1}, \dots, \Sigma_{r})$ is called \emph{Witt space} if the following condition is satisfied.
For each $1 \leq i \leq r$ such that the dimension $c_{i}$ of the link $L^{(i)}$ of the singular stratum $\Sigma_{i}$ is even, we have $H_{\frac{c_{i}}{2}}(L^{(i)}) = 0$.
\end{definition}

We specialize to the case of the lower middle perversity $\overline{p} = \overline{m}$ and the upper middle perversity $\overline{q} = \overline{n}$.
Consider a compact depth one Witt space $(X, \Sigma_{1}, \dots, \Sigma_{r})$.
Suppose that $f_{<}^{(i)} \colon L_{<}^{(i)} \rightarrow L^{(i)}$ is a $G^{(i)}$-equivariant Moore approximation of degree $k_{i} = c_{i} - \overline{n}(c_{i} + 1) = \lfloor \frac{1}{2}(c_{i} + 1) \rfloor$.
Then, it follows from the homology vanishing condition of \Cref{definition witt space} that $f_{<}^{(i)}$ is also a $G^{(i)}$-equivariant Moore approximation of $L^{(i)}$ of complementary degree $l_{i} = c_{i} - \overline{m}(c_{i} + 1) = \lceil \frac{1}{2}(c_{i} + 1) \rceil$.
Thus, the resulting intersection spaces $I^{\overline{m}}X$ and $I^{\overline{n}}X$ of \Cref{definition intersection space} can be chosen to be equal, $I^{\overline{m}}X = I^{\overline{n}}X$.

Let us state the main result of this section.

\begin{theorem}\label{main corollary}
Let $(X, \Sigma_{1}, \dots, \Sigma_{r})$ be a compact depth one Witt space of dimension $n \geq 3$.
Assume that for every $i = 1, \dots, r$ the link $L^{(i)}$ of the singular stratum $\Sigma_{i}$ admits an equivariant Moore approximation $f_{<}^{(i)} \colon L_{<}^{(i)} \rightarrow L^{(i)}$ of degree $k_{i} = \lfloor \frac{1}{2}(c_{i} + 1) \rfloor$ (and thus, also one of degree $l_{i} = \lceil \frac{1}{2}(c_{i} + 1) \rceil$).
Let $F_{<}^{(i)} \colon \operatorname{ft}_{<} E^{(i)} \rightarrow E^{(i)}$ denote the fiberwise truncation of the link bundle $\pi^{(i)} \colon E^{(i)} \rightarrow B^{(i)}$.
Then, for the resulting middle perversity intersection space
$$
IX = \operatorname{cone}(\bigsqcup_{i} F_{<}^{(i)}) \cup_{\partial M} M
$$
of \Cref{definition intersection space}, the following statements hold:
\begin{enumerate}[(a)]
\item If the rational Hurewicz homomorphism
$$
\operatorname{Hur}_{n-1 \ast} \colon \pi_{n-1}(\operatorname{cone}(F_{<}^{(i)}), \operatorname{pt}) \rightarrow H_{n-1}(\operatorname{cone}(F_{<}^{(i)}))
$$
is surjective for every $i$, then the middle perversity intersection space $IX$ admits a completion $\widehat{IX} = IX \cup e^{n}$ to a rational Poincar\'{e} duality space by attaching a single $n$-cell.
The rational homotopy type of $\widehat{IX}$ is determined by the intersection space $IX$ whenever a theorem of Stasheff \cite{sta} is applicable.
\item Fix an index $i \in \{1, \dots, r\}$.
If the rational Hurewicz homomorphism in part $(a)$ is surjective, then the local duality obstructions $\mathcal{O}_{\ast}(\pi^{(i)}, k_{i}, l_{i})$ vanish.
The converse implication holds at least when the truncation cone, $\operatorname{cone}(F_{<}^{(i)})$, is simply connected (see \Cref{lemma simply connected cone}), and
\begin{align*}
\operatorname{dim}B^{(i)} < \begin{cases}
(\operatorname{dim} L^{(i)}+1)/2, \quad &\text{if } \operatorname{dim} L^{(i)} \text{ is odd}, \\
(\operatorname{dim} L^{(i)}+4)/2, \quad &\text{if } \operatorname{dim} L^{(i)} \text{ is even}.
\end{cases}
\end{align*}
\item Suppose that the dimension of $X$ is of the form $n = 4d$.
Furthermore, suppose that the rational Hurewicz homomorphism in part $(a)$ is surjective for every $i$, and let $\widehat{IX} = IX \cup e^{n}$ be a completion to a Poincar\'{e} duality space as provided by par $(a)$.
Then, the Witt element $w_{HI} \in W(\mathbb{Q})$ induced by the symmetric intersection form $H_{2d}(\widehat{IX}) \times H_{2d}(\widehat{IX}) \rightarrow \mathbb{Q}$ of the Poincar\'{e} duality space $\widehat{IX}$ equals the Witt element $w_{IH} \in W(\mathbb{Q})$ induced by the Goresky-MacPherson-Siegel intersection form $IH_{2d}(X) \times IH_{2d}(X) \rightarrow \mathbb{Q}$ on middle-perversity intersection homology (see Section I.4.1 in \cite{sieg}).
In particular, the two intersection forms have equal signatures.
\end{enumerate}
\end{theorem}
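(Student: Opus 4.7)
The plan for part (a) is to apply \Cref{main theorem} independently to each singular stratum. For every $i$, surjectivity of the rational Hurewicz homomorphism of $\operatorname{cone}(F_{<}^{(i)})$ produces an attaching map $\phi_{i} \colon S^{n-1} \to \operatorname{cone}(F_{<}^{(i)})$ and an orientation class $[e_{\phi_{i}}]$ making $(\operatorname{cone}(F_{<}^{(i)}) \cup_{\phi_{i}} D^{n}, E^{(i)})$ a rational Poincar\'{e} duality pair of dimension $n$. Gluing these pairs to the oriented manifold-with-boundary $(M, \partial M = \bigsqcup_{i} E^{(i)})$ along the boundary yields $\widehat{IX} = IX \cup \bigsqcup_{i} e_{i}^{n}$, and a standard Mayer-Vietoris plus five lemma argument (following the pattern of Lemma 3.6 in \cite{klim}) verifies that cap product with $\sum_{i} [e_{\phi_{i}}] + [M, \partial M]$ induces Poincar\'{e} duality on $\widehat{IX}$. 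The determination of the rational homotopy type from $IX$ reduces to applying Stasheff's theorem \cite{sta} to the simply connected rational Poincar\'{e} duality space $\widehat{IX}$, once it is observed that $\widehat{IX}$ and $IX$ share their rational cohomology below degree $n$.

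For part (b), the forward direction follows from the last clause of \Cref{main theorem}. For the converse, I would first check that the dimension hypothesis on $\dim B^{(i)}$ is precisely equivalent to the inequality $n \leq 3\operatorname{max}\{k_{i}, l_{i}\}-1$ required by \Cref{proposition rational poincare duality space}: using $n = \dim B^{(i)} + \dim L^{(i)} + 1$ and splitting by the parity of $\dim L^{(i)}$ gives in each case the bound $\dim B^{(i)} \leq (\dim L^{(i)}-1)/2$ or $\dim B^{(i)} \leq \dim L^{(i)}/2 + 1$, exactly matching the stated strict inequalities. With simple connectedness of $\operatorname{cone}(F_{<}^{(i)})$ (cf.\ \Cref{lemma simply connected cone}) and the assumed vanishing of $\mathcal{O}_{\ast}(\pi^{(i)}, k_{i}, l_{i})$, \Cref{proposition rational poincare duality space} then supplies the sought surjectivity.

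For part (c), the plan is to exploit the genuine fundamental class of $\widehat{IX}$ produced by part (a) via Novikov additivity for Poincar\'{e} duality pairs (Lemma 3.4 in \cite{klim}) applied to the decomposition of $\widehat{IX}$ along $\partial M$. This expresses $w_{HI}$ in $W(\mathbb{Q})$ as the sum of the Witt elements of $(M, \partial M)$ and of each $(\operatorname{cone}(F_{<}^{(i)}) \cup e_{i}^{n}, E^{(i)})$. Each cone contribution should be Witt trivial: combining the computations in \Cref{remark homology calculations} and \Cref{remark isomorphisms induced by inclusions} with the connectivity statement of \Cref{lemma homology of truncation cone}, the kernel of the map $H_{2d}(\operatorname{cone}(F_{<}^{(i)}) \cup e_{i}^{n}) \to H_{2d}(\operatorname{cone}(F_{<}^{(i)}) \cup e_{i}^{n}, E^{(i)})$ is a half-dimensional isotropic subspace of the middle-degree intersection form. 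On the intersection homology side, for depth one Witt spaces $IH_{2d}(X)$ admits an analogous splitting into a contribution from $(M, \partial M)$ plus link contributions that vanish under the Witt condition (compare Section 11 of \cite{bc}), so that $w_{IH}$ also reduces to the Witt element of $(M, \partial M)$, and the equality $w_{HI} = w_{IH}$ follows.

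The principal obstacle will be in part (c): establishing that the kernels above are truly Lagrangian, and not merely isotropic, and matching the resulting Witt element against Siegel's computation of $w_{IH}$. This matching requires care because Banagl-Chriestenson's original comparison in Section 11 of \cite{bc} used a non-canonical abstract form on $HI$; here, the canonical intersection form coming from $\widehat{IX}$ must be shown to be Witt-equivalent to that abstract construction before the comparison with the Goresky-MacPherson-Siegel form can be transported through the Witt group.
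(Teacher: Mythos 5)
Your plan for part (a) has a genuine gap. The theorem claims a completion of $IX$ by attaching a \emph{single} $n$-cell, but you attach one cell per stratum and glue the $r$ pairs $(\operatorname{cone}(F_{<}^{(i)}) \cup_{\phi_{i}} D^{n}, E^{(i)})$ separately to $(M, \partial M)$. Besides not matching the single-cell assertion, this gluing does not even reproduce $IX$: by \Cref{definition intersection space}, $\operatorname{cone}(\bigsqcup_{i} F_{<}^{(i)})$ has one cone point, i.e.\ it is the wedge $\bigvee_{i} \operatorname{cone}(F_{<}^{(i)})$, whereas your construction leaves the $r$ cone points unidentified; identifying them changes the homotopy type (for connected $M$ it adds $r-1$ circles), so the space you build is not of the form $IX \cup (\text{cells})$ at all. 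The paper's proof handles exactly this point: it uses \Cref{lemma hurewicz map} to produce the individual maps $\phi^{(i)}$, then combines them into a single attaching map $\phi \colon S^{n-1} \rightarrow \bigvee_{i}\operatorname{cone}(F_{<}^{(i)})$ by means of disjoint embedded disks in $S^{n-1}$, the resulting degree-one pinch maps $q^{(i)}$, and the collapse maps $r^{(i)}$ of the wedge; it then verifies, via the diagram involving $\overline{Z}$, that $\partial_{n} \colon H_{n}(\operatorname{cone}(\phi), Z) \rightarrow H_{n-1}(Z)$ hits $[E] = \sum_{i}[E^{(i)}]$, so that the implication $(ii) \Rightarrow (i)$ of \Cref{proposition poincare duality pairs} (applied to $\bigsqcup_{i} F_{<}^{(i)}$ with mapping cone the wedge) and a single application of the gluing theorem (Theorem 3.3 in \cite{klim}) give $\widehat{IX} = IX \cup e^{n}$. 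Without this combination step, applying \Cref{main theorem} stratum by stratum does not prove the statement.

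Parts (b) and (c) are essentially on track. Your (b) is the paper's argument, including the arithmetic translating the bound on $\operatorname{dim}B^{(i)}$ into $n \leq 3\operatorname{max}\{k_{i}, l_{i}\}-1$ required by \Cref{proposition rational poincare duality space}. In (c) your route (Novikov additivity, Lemma 3.4 in \cite{klim}, reduction of both Witt elements to that of $(M, \partial M)$) is the paper's, but the ``principal obstacle'' you flag is not actually there: you do not need the cone contributions to carry Lagrangian kernels, because condition (1) of \Cref{proposition poincare duality pairs} (surjectivity of $H_{2d}(E^{(i)}) \rightarrow H_{2d}(\operatorname{cone}(F_{<}^{(i)}))$, combined with \Cref{remark homology calculations}) forces the map $H_{2d}(\operatorname{cone}(F_{<}^{(i)}) \cup e^{n}) \rightarrow H_{2d}(\operatorname{cone}(F_{<}^{(i)}) \cup e^{n}, E^{(i)})$ to vanish, so the intersection form of each cone pair is defined on the zero space and its Witt class is trivially zero. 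Likewise, no comparison with the non-canonical abstract form of Banagl--Chriestenson is needed: the paper simply invokes Corollary 10.2 in \cite{bc}, which identifies $w_{IH}$ with the Witt element of $(M, \partial M)$.
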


\begin{proof}
$(a)$.
Fix $i \in \{1, \dots, r\}$.
In the following, we let $f^{(i)} \colon Y^{(i)} \rightarrow Z^{(i)}$ denote the map $F_{<}^{(i)} \colon \operatorname{ft}_{<} E^{(i)} \rightarrow E^{(i)}$, and set $X^{(i)} = \operatorname{cone}(f^{(i)})$.
By the same argument as in the first half of the proof of \Cref{main theorem} (see \Cref{proof of main theorem}), we can show that the map $f^{(i)} \colon Y^{(i)} \rightarrow Z^{(i)}$ and its mapping cone $X^{(i)}$ satisfy all the assumptions of \Cref{proposition poincare duality pairs}.
Then, it follows that the assumptions of \Cref{proposition poincare duality pairs} are also satisfied for the map $f \colon Y \rightarrow Z$ given by $\bigsqcup_{i} f^{(i)} \colon \bigsqcup_{i}Y^{(i)} \rightarrow \bigsqcup_{i}Z^{(i)}$, and its mapping cone $X = \bigvee_{i} X^{(i)}$.
In the following, we will check statement $(ii)$ of \Cref{proposition poincare duality pairs} for the map $f \colon Y \rightarrow Z$ and its mapping cone $X$.
Then, the claim of statement $(a)$ follows from the implication $(ii) \Rightarrow (i)$ of \Cref{proposition poincare duality pairs} and the gluing principle for Poincar\'{e} duality pairs (see Theorem 3.3 in \cite{klim}).

Let $x^{(i)} \in H_{n-1}(X^{(i)})$ denote the image of the orientation class $[Z^{(i)}] \in H_{n-1}(Z^{(i)})$ of the Poincar\'{e} space $Z^{(i)} = E^{(i)}$ under the map $H_{n-1}(Z^{(i)}) \rightarrow H_{n-1}(X^{(i)})$ induced by the inclusion $Z^{(i)} \subset X^{(i)}$.
Then, by the implication $(i) \Rightarrow (ii)$ of \Cref{lemma hurewicz map}, there exists a map $\phi^{(i)} \colon (S^{n-1}, s_{0}) \rightarrow (X^{(i)}, \operatorname{pt})$ such that the element $x^{(i)} \in H_{n-1}(X^{(i)})$ is contained in the image of the connecting homomorphism $\partial_{n} \colon H_{n}(\operatorname{cone}(\phi^{(i)}), X^{(i)}) \rightarrow H_{n-1}(X^{(i)})$.
Hence, by the implication $(iii) \Rightarrow (ii)$ of \Cref{proposition poincare duality pairs} applied to the map $f^{(i)} \colon Y^{(i)} \rightarrow Z^{(i)}$ and its mapping cone $X^{(i)}$, the orientation class $[Z^{(i)}] \in H_{n-1}(Z^{(i)})$ of the Poincar\'{e} space $Z^{(i)} = E^{(i)}$ lies in the image of the connecting homomorphism $\partial_{n} \colon H_{n}(\operatorname{cone}(\phi^{(i)}), Z^{(i)}) \rightarrow H_{n-1}(Z^{(i)})$.
Pick basepoints $y_{i} \in Y^{(i)}$, and let $\overline{Z} \subset X$ denote the mapping cone of the restriction $f| \colon \{y_{1}, \dots, y_{r}\} \rightarrow \bigsqcup_{i} Z^{(i)} = Z$.
For $i = 1, \dots, r$ we choose pairwise disjoint embeddings $\iota_{i} \colon D^{n-1} \rightarrow S^{n-1} \setminus \{s_{0}\}$.
After choosing identifications $S^{n-1}/(S^{n-1} \setminus \iota_{i}(D^{n-1} \setminus \partial D^{n-1})) = S^{n-1}$ of pointed spaces, the quotient map $S^{n-1} \rightarrow S^{n-1}/(S^{n-1} \setminus \iota_{i}(D^{n-1} \setminus \partial D^{n-1}))$ can be considered as a map $q^{(i)} \colon (S^{n-1}, s_{0}) \rightarrow (S^{n-1}, s_{0})$ of degree $1$.
Let
$$
\phi \colon (S^{n-1}, s_{0}) \rightarrow X = \bigvee_{i} X^{(i)}
$$
denote the map uniquely determined by $r^{(i)} \circ \phi = \phi^{(i)} \circ q^{(i)}$ for all $i = 1, \dots, r$, where $r^{(i)} \colon X = \bigvee_{i} X^{(i)} \rightarrow X^{(i)}$ is the collapse map given by $r^{(i)}|_{X^{(i)}} = \operatorname{id}_{X^{(i)}}$ and $r^{(i)}|_{X^{(j)}} = \operatorname{pt} \in X^{(i)}$ for $j \neq i$.
Let $R^{(i)} \colon \operatorname{cone}(\phi^{(i)}) \rightarrow \operatorname{cone}(\phi)$ denote the map induced by the pair $(q^{(i)}, r^{(i)})$.
Note that $R^{(i)}$ restricts to the collapse map $S^{(i)} \colon \overline{Z} \rightarrow Z^{(i)}$ that extends $\operatorname{id}_{Z^{(i)}}$ by the constant map to the basepoint $f(y_{i}) \in Z^{(i)}$.
Writing $\rho_{i} = R^{(i)}_{\ast}$ and $\sigma_{i} = S^{(i)}_{\ast}$ for the induced maps on homology, we have the commutative diagram
\begin{center}
\begin{tikzpicture}

\path (0,0) node(a1) {$H_{n}(\operatorname{cone}(\phi), Z)$}
         (4, 0) node(a2) {$H_{n}(\operatorname{cone}(\phi), \overline{Z})$}
         (9, 0) node(a3) {$\bigoplus_{i} H_{n}(\operatorname{cone}(\phi^{(i)}), Z^{(i)})$}
         (0,-1.5) node(b1) {$H_{n-1}(Z)$}
         (4, -1.5) node(b2) {$H_{n-1}(\overline{Z})$}
         (9, -1.5) node(b3) {$\bigoplus_{i} H_{n-1}(Z^{(i)})$,};

\draw[->] (a1) -- node[above] {$\operatorname{incl}_{\ast}$} node[below] {$\cong$} (a2);
\draw[->] (a2) -- node[above] {$(\rho_{1}, \dots, \rho_{r})$} node[below] {$\cong$} (a3);

\draw[->] (b1) -- node[above] {$\operatorname{incl}_{\ast}$} node[below] {$\cong$} (b2);
\draw[->] (b2) -- node[above] {$(\sigma_{1}, \dots, \sigma_{r})$} node[below] {$\cong$} (b3);

\draw[->] (a1) -- node[left] {$\partial_{n}$} (b1);
\draw[->] (a2) -- node[left] {$\partial_{n}$} (b2);
\draw[->] (a3) -- node[left] {$\oplus_{i} \partial_{n}$} (b3);
\end{tikzpicture}
\end{center}
where all horizontal arrows are isomorphisms by the Eilenberg-Steenrod axioms because $n-1>0$.
Note that the orientation class $[Z] \in H_{n-1}(Z)$ of the Poincar\'{e} space $Z = E$ is mapped to the tuple $([Z^{(1)}], \dots, [Z^{(r)}]) \in \bigoplus_{i} H_{n-1}(Z^{(i)})$ by the lower horizontal arrows.
Hence, it follows that the orientation class $[Z] \in H_{n-1}(Z)$ of the Poincar\'{e} space $Z = E$ lies in the image of the connecting homomorphism $\partial_{n} \colon H_{n}(\operatorname{cone}(\phi), Z) \rightarrow H_{n-1}(Z)$, which is the desired statement $(ii)$ in \Cref{proposition poincare duality pairs}.

$(b)$. In view of \Cref{main theorem}, the claim follows directly from \Cref{proposition rational poincare duality space}.

$(c)$.
In view of Corollary 10.2 in \cite{bc} it suffices to show that in the Witt group of $\mathbb{Q}$, $W(\mathbb{Q})$, the Witt element induced by the intersection form of the Poincar\'{e} space $\widehat{IX}$ equals the Witt element induced by the intersection form of the manifold with boundary $(M, \partial M)$.
The proof is analogous to the proof of Corollary 3.10 in \cite{klim}, which covers the case of isolated singularities.
Generalizing to singular strata of arbitrary dimension, we still apply Novikov additivity of Witt elements under gluing of Poincar\'{e} duality pairs (see Lemma 3.4 in \cite{klim}), where we now have to use the fact that the inclusion $Z \subset X$ induces a surjective homomorphism $H_{2d}(Z) \rightarrow H_{2d}(X)$, which holds by condition (1) of \Cref{proposition poincare duality pairs}.
\end{proof}

\begin{remark}
Passing from Witt elements to signatures in our proof of \Cref{main corollary}(c), we recover the statement of Theorem 11.3 in \cite{bc}, saying that the signature of the intersection form on $H_{2d}(\widehat{IX}) = H_{2d}(IX)$ equals the so-called Novikov signature of the manifold-with-boundary $(M, \partial M)$.
Note that the latter is shown in \cite{bc} for the intersection space of a closed oriented two strata Witt space under the assumption that the local duality obstructions of the link bundle vanish.
The proof given in \cite{bc} is involved, and requires to construct an intersection form of the intersection space $IX$ that is symmetric.
As in our proof of \Cref{main corollary}(c), one also exploits surjectivity of the map $H_{2d}(Z) \rightarrow H_{2d}(X)$, namely in diagram (11.2) in \cite{bc}, where the map takes the form $C_{\geq k \ast} \colon H_{m}(\partial M) \rightarrow H_{m}(Q_{\geq k}E)$.
\end{remark}

\subsection{An example}\label{An example}
To illustrate a non-trivial case in which \Cref{main corollary} applies, we discuss a class of examples of Witt spaces having as singular strata a finite number of circles with twisted link bundles.
As input data for our construction, we employ commutative diagrams of the form
\begin{center}
\begin{tikzpicture}
\path (0,0) node(a) {$L_{<k}$}
         (3, 0) node(b) {$L$}
         (0, -1.5) node(c) {$L_{<k}$}
         (3, -1.5) node(d) {$L$,};

\draw[->] (a) -- node[left] {$\lambda_{<k}$} (c);
\draw[->] (a) -- node[above] {$f_{<}$} (b);
\draw[->] (b) -- node[right] {$\lambda$} (d);
\draw[->] (c) -- node[above] {$f_{<}$} (d);
\end{tikzpicture}
\end{center}
where
\begin{itemize}
\item $L$ is a closed oriented smooth manifold of dimension $c > 0$,
\item $f_{<} \colon L_{<} \rightarrow L$ is a Moore approximation of $L$ of degree $\lfloor\frac{1}{2}(c + 1) \rfloor$,
\item $\lambda \colon L \rightarrow L$ is an orientation preserving diffeomorphism,
\item $\lambda_{<} \colon L_{<} \rightarrow L_{<}$ is a homeomorphism,
\end{itemize}
such that
\begin{itemize}
\item $H_{\frac{c}{2}}(L) = 0$ when $c$ is even,
\item $f_{<}$ induces a surjection $\pi_{1}(L_{<}, x_{0}) \rightarrow \pi_{1}(L, f_{<}(x_{0}))$ for every $x \in L_{<}$, and
\item $\lambda^{N} = \operatorname{id}_{L}$ and $\lambda_{L}^{N} = \operatorname{id}_{L}$ for some integer $N > 0$.
\end{itemize}

Given a commutative diagram as above, we consider the mapping torus
$$
E = (L \times [0, 1])/(x, 0) \sim (\lambda(x), 1)
$$
as the total space of a smooth fiber bundle $\pi \colon E \rightarrow S^{1}$ with fiber $L$ and structure group $G = \mathbb{Z}/N\mathbb{Z}$ acting on $L$ via $i + N\mathbb{Z} \mapsto \lambda^{i}$.
The bundle $\pi$ is \emph{flat} in the sense that we can choose $G$-valued transition functions that are locally constant.
Recall that the integers $k = \lfloor\frac{1}{2}(c + 1) \rfloor$ and $l = \lceil\frac{1}{2}(c + 1) \rceil$ are associated to the upper-middle perversity $\overline{n}$ and the lower-middle perversity $\overline{m}$, which are complementary.
By the properties of the above diagram, the map $f_{<} \colon L_{<} \rightarrow L$ is a $G$-equivariant Moore approximation both of degree $k$ and of degree $l$.
It can be shown that the local duality obstructions $\mathcal{O}_{\ast}(\pi, k, l)$ vanish.
(In fact, we can follow the proof of Theorem 7.1 in \cite{bc}, which applies literally when we replace the universal cover $\widetilde{B} \rightarrow B$ by the $N$-sheeted cover $S^{1} \rightarrow S^{1}$ having  the finite group $\pi_{1} = \mathbb{Z}/N$ as its group of deck transformations.)
Let $F_{<} \colon \operatorname{ft}_{<} E \rightarrow E$ denote the fiberwise truncation induced by the $G$-equivariant Moore approximation $f_{<} \colon L_{<} \rightarrow L$.
The truncation cone of $f_{<}$, $\operatorname{cone}(F_{<})$, is simply connected by \Cref{lemma simply connected cone}.
Hence, \Cref{main corollary}(b) implies that the rational Hurewicz homomorphism
$$
\operatorname{Hur}_{n-1 \ast} \colon \pi_{n-1}(\operatorname{cone}(F_{<}), \operatorname{pt}) \rightarrow H_{n-1}(\operatorname{cone}(F_{<}))
$$
is surjective.

We choose a suitable finite disjoint union $\bigsqcup_{i} E^{(i)}$ of mapping tori $E^{(i)}$ coming from the previous construction that can be realized as the boundary of a compact oriented smooth manifold $M$ of dimension $n = c+2$.
For instance, when $c = 6$, then we can use a single mapping torus because $\Omega_{7}^{SO} = 0$.
Denoting the homotopy pushout of $S^{1} \stackrel{\pi}{\longleftarrow} E^{(i)} \stackrel{\operatorname{id}_{E^{(i)}}}{\longrightarrow} E^{(i)}$ by $DE^{(i)}$, we thus obtain a compact depth one Witt space $X = M \cup_{\partial M} \bigsqcup_{i} DE^{(i)}$ of dimension $n \geq 3$.
Its middle perversity intersection space is given by
$$
IX = \operatorname{cone}(\bigsqcup_{i} F_{<}^{(i)}) \cup_{\partial M} M,
$$
and according to \Cref{main corollary}(a), there is a completion $\widehat{IX} = IX \cup e^{n}$ to a rational Poincar\'{e} duality space by attaching a single $n$-cell.
The rational homotopy type of $\widehat{IX}$ is determined by the intersection space $IX$ whenever a theorem of Stasheff \cite{sta} is applicable.
By \Cref{main corollary}(c), the signature of the Poincar\'{e} duality space $\widehat{IX}$ agrees with the signature of the Goresky-MacPherson-Siegel intersection form on middle-perversity intersection homology of $X$.

In conclusion, let us discuss an example for a commutative diagram as above.

\begin{example}
Consider a closed smooth manifold $K$ of dimension $c \geq 3$ such that $H_{\frac{c}{2}}(K) = 0$ when $c$ is even.
We fix a $c$-dimensional triangulation $K^{0} \subset \dots \subset K^{c}$ of $K$.
Set $k = \lfloor\frac{1}{2}(c + 1) \rfloor$.
The $(k-1)$-skeleton $K^{k-1}$ can be extended to a $k$-dimensional $CW$ complex $K_{<}$ in such a way that there exists a Moore approximation $g_{<} \colon K_{<} \rightarrow K^{k}$ of degree $k$ which restricts to the identity map on $K^{k-1}$.
(When $K$ is simply connected and $k \geq 3$, this can be achieved by using Proposition 1.6 of \cite{ban}.
More generally, note that Proposition 1.3 in \cite{wra} can be applied whenever $k \geq 2$, and without assuming $K$ to be simply connected.)
The connected sum $L = K \sharp K$ is a closed smooth $c$-manifold which admits an orientation preserving diffeomorphism $\lambda \colon L \rightarrow L$ interchanging the two summands in such a way that $\lambda^{2} = \operatorname{id}_{L}$.
We may equip $L$ with a $c$-dimensional CW structure $L^{0} \subset \dots \subset L^{c}$ in such a way that $L^{c-1} = K^{c-1} \vee_{x_{0}} K^{c-1}$ for some basepoint $x_{0} \in K^{0}$, and such that $\lambda$ restricts to the homeomorphism $L^{c-1} \rightarrow L^{c-1}$ that interchanges the two copies of $L^{c-1}$.
(To achieve this, we fix a $c$-simplex $\Delta_{c}$ of $K$, choose an embedded closed unit ball $B^{c}$ in the interior of $\Delta_{c}$, and delete the interior $U^{c}$ of $B^{c}$.
Then, we form the connected sum $L = K \sharp K$ by gluing two copies of $K \setminus U^{c}$ via the identity map on $\partial B^{c}$.
In order to find the desired CW structure on $L$, we modify $B^{c}$ by moving one point of $\partial B^{c}$ to a $0$-simplex $\{x_{0}\} \subset \partial \Delta_{c}$.
Then, we see that $L^{c-1} = K^{c-1} \vee_{x_{0}} K^{c-1}$, and the $c$-cells of $L$ are given by the $c$-simplices of the two copies of $K$ that are different from $\Delta_{c}$, plus one new $c$-cell whose attaching map arises from gluing the two copies of $\Delta_{c}$ along the modified $\partial B^{c}$.)
We define a degree $k$ Moore approximation $f_{<} \colon L_{<} \rightarrow L$ by taking the composition
$$
K_{<} \vee_{x_{0}} K_{<} \stackrel{g_{<} \vee g_{<}}{\longrightarrow} K^{k} \vee_{x_{0}} K^{k} = L^{k} \hookrightarrow L = K \sharp K.
$$
Finally, we define $\lambda_{<} \colon L_{<} \rightarrow L_{<}$ to be the homeomorphism that interchanges the two copies of $K_{<}$ in the bouquet $L_{<} = K_{<} \vee_{x_{0}} K_{<}$.
Then, the desired diagram commutes by construction.
\end{example}

\end{document}